\theoremstyle{plain}
    \newtheorem{theorem}{Theorem}
    \newtheorem{lemma}[theorem]{Lemma}
    \newtheorem{corollary}[theorem]{Corollary}
    \newtheorem{proposition}[theorem]{Proposition}
    \Crefname{lemma}{Lemma}{Lemmas}
    \Crefname{corollary}{Corollary}{corollaries}
    \Crefname{conjecture}{Conjecture}{conjectures}
    \Crefname{proposition}{Proposition}{Propositions}
\theoremstyle{definition}
    \newtheorem{definition}[theorem]{Definition}
    \newtheorem{obs}[theorem]{Observation}
    \newtheorem*{notation}{Notation}
    \newtheorem{example}[theorem]{Example}
    \Crefname{definition}{Definition}{Definitions}
    \Crefname{example}{Example}{Examples}
\theoremstyle{remark}
    \newtheorem{remark}[theorem]{Remark}
    \newtheorem{claim}[theorem]{Claim}
    \Crefname{remark}{Remark}{Remarks}
    \Crefname{claim}{Claim}{Claims}
\numberwithin{equation}{section}
\numberwithin{theorem}{section}
\newcommand{\F}{\texorpdfstring{$F$}{F}}
\newcommand{\thetaF}{\texorpdfstring{$\theta(F)$}{\texttheta(F)}}
\DeclareMathOperator*{\argmax}{arg\,max}
\DeclareMathOperator{\ex}{ex}
\DeclareMathOperator{\degd}{deg_d}
\DeclareMathOperator{\degu}{deg_u}
\newcommand{\floor}[1]{\left\lfloor#1\right\rfloor}
\newcommand{\ceil}[1]{\left\lceil#1\right\rceil}
\newcommand{\EE}{\mathbb E}
\renewcommand{\P}{\mathbf P}
\newcommand{\ZZ}{\mathbb Z}
\newcommand{\RR}{\mathbb R}
\newcommand{\eps}{\varepsilon}
\newcommand{\mcB}{\mathcal B}
\newcommand{\mcE}{\mathcal E}
\newcommand{\mcF}{\mathcal F}
\newcommand{\mcH}{\mathcal H}
\newcommand{\mcM}{\mathcal M}
\newcommand{\undirect}[1]{\widetilde{#1}}
\newcommand{\undirectedchi}[1]{\upchi(#1)}
\newcommand{\degw}{\deg_{\rho}}
\newcommand{\ecundir}[1]{e_{\mathrm{u}}\left(#1\right)}
\newcommand{\ecdir}[1]{e_{\mathrm{d}}\left(#1\right)}
\newcommand{\ecw}[2]{w_{#1}\left(#2\right)} 
\newcommand{\simplex}[1]{\triangle^{#1 - 1}}
\newcommand{\simplexx}[1]{\triangle^{#1}}
\newcommand{\sym}[2]{{#1}_{#2}^\mathrm{sym}} 
\newcommand{\MG}[2]{#1\left\llbracket#2\right\rrbracket} 
\newcommand{\MMG}[3]{\mathfrak{G}_{#2,#1}^{(#3)}} 
\newcommand{\optvec}[2]{\vb{y}^*_{#2, #1}} 
\newcommand{\optvecn}[3]{\vb{x}^{(#3)}_{#2, #1}} 
\newcommand{\augmentup}{\xrightarrow{\operatorname{aug}}}
\newcommand{\densesub}{\xrightarrow{\operatorname{sub}}}
\DeclareFontFamily{U}{mathx}{}
\DeclareFontShape{U}{mathx}{m}{n}{<-> mathx10}{}
\DeclareSymbolFont{mathx}{U}{mathx}{m}{n}
\DeclareMathAccent{\widecheck}{0}{mathx}{"71}
\begin{document}
\title{Tur\'an Problems for Mixed Graphs}
\author{Nitya Mani and Edward Yu}

\maketitle

\begin{abstract}
    We investigate natural Tur\'an problems for mixed graphs, generalizations of graphs where edges can be either directed or undirected. We study a natural \textit{Tur\'an density coefficient} that measures how large a fraction of directed edges an $F$-free mixed graph can have; we establish an analogue of the Erd\H{o}s-Stone-Simonovits theorem and give a variational characterization of the Tur\'an density coefficient of any mixed graph (along with an associated extremal $F$-free family). 
    
    This characterization enables us to highlight an important divergence between classical extremal numbers and the Tur\'an density coefficient. We show that Tur\'an density coefficients can be irrational, but are always algebraic; for every positive integer $k$, we construct a family of mixed graphs whose Tur\'an density coefficient has algebraic degree $k$.
\end{abstract}


\section{Introduction}\label{sec.intro}
One of the earliest results in extremal graph theory (or indeed, classical combinatorics) was a result of Mantel in 1907, which showed that an $n$-vertex graph which does not contain a triangle has at most $\frac{n^2}4$ edges~\cite{mantel}.
A central problem in extremal graph theory has been to determine the \textit{extremal number} $\ex(n, F)$ of $F$-free graphs, the maximum number of edges that an $n$-vertex graph can have if it does not contain a copy of $F$ as a subgraph. This question is referred to as Tur\'an's problem, due to his famous result in 1941 characterizing $\ex(n, K_r)$ where $K_r$ is a \textit{clique} or \textit{complete graph} on $r$ vertices~\cite{turan_thm}. Tur\'an's theorem was generalized to all non-bipartite forbidden graphs $F$ in the following result: 

\begin{theorem}[Erd\H os-Stone-Simonovits, 1946~\cite{erdos_simonovits_1966, erdos_stone_1946}]
Let $\upchi(F)$ denote  the \emph{chromatic number} of $F$, the minimum number of colors required in a proper vertex-coloring of $F$ (where no two adjacent vertices have the same color). Then, we have that
\[\ex(n,F) = \left(1 - \frac{1}{\upchi(F)-1} + o(1) \right) \frac{n^2}{2}.\]
\end{theorem}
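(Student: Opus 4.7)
The plan is to prove matching lower and upper bounds, writing $r = \upchi(F) - 1$.

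For the lower bound, consider the Tur\'an graph $T(n,r)$, the complete $r$-partite graph on $n$ vertices with part sizes as balanced as possible. A direct computation gives $e(T(n,r)) = \left(1 - \frac{1}{r}\right)\frac{n^2}{2} - O(n)$. Since $T(n,r)$ is properly $r$-colorable while $F$ requires at least $r+1$ colors, no copy of $F$ embeds in $T(n,r)$, so it is $F$-free and witnesses the lower bound.

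For the upper bound, the key ingredient is the Erd\H os-Stone theorem: for every $r \geq 1$, $t \geq 1$, and $\eps > 0$, any graph on sufficiently many vertices with more than $\left(1 - \frac{1}{r} + \eps\right)\frac{n^2}{2}$ edges contains a copy of $K_{r+1}(t)$, the complete $(r+1)$-partite graph with $t$ vertices in each part. Taking this as a black box, set $t = |V(F)|$. Since $\upchi(F) = r+1$, a proper $(r+1)$-coloring partitions $V(F)$ into $r+1$ independent sets each of size at most $t$, furnishing an embedding $F \hookrightarrow K_{r+1}(t)$. Consequently any $n$-vertex graph with edge density exceeding $1 - \frac{1}{r} + \eps$ must contain $F$, and combining with the lower bound yields the stated asymptotic.

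The main obstacle is establishing Erd\H os-Stone itself, which I would attack by induction on $r$. The base case $r = 1$ reduces to the claim $\ex(n, K_2(t)) = o(n^2)$, which is the Kov\'ari-S\'os-Tur\'an bound for the complete bipartite graph $K_{t,t}$. For the inductive step, given $G$ with $e(G) > \left(1 - \frac{1}{r} + \eps\right)\frac{n^2}{2}$, a standard vertex-deletion argument extracts a subgraph of minimum degree at least $\left(1 - \frac{1}{r} + \eps/2\right)n$. Applying the inductive hypothesis with a much larger parameter $T \gg t$ produces a copy of $K_r(T)$ inside $G$. One then counts, for each vertex $v$ outside this $K_r(T)$, the number of parts in which $v$ has at least $t$ neighbors; a double-counting argument combined with the density hypothesis shows that many vertices are \emph{rich}, i.e.\ have $\geq t$ neighbors in each of the $r$ parts. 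The subtle point is to choose $T$ large enough that, by pigeonholing over the $\binom{T}{t}^r$ possible $r$-tuples of $t$-subsets, at least $t$ of these rich vertices share a common $r$-tuple of neighborhoods; these $t$ vertices together with the selected subsets form the desired $K_{r+1}(t)$. Tracking the quantitative dependence between $T$, $t$, $r$, and $\eps$ through the pigeonhole is where the bulk of the technical work lies.
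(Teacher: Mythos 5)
This statement is the classical Erd\H{o}s--Stone--Simonovits theorem, which the paper only cites from the literature and does not prove, so there is no in-paper argument to compare against. Your sketch is the standard and correct proof: the Tur\'an graph lower bound, the reduction of the upper bound to finding $K_{r+1}(t)$ via a proper coloring of $F$, and the usual induction on $r$ for Erd\H{o}s--Stone with the K\H{o}v\'ari--S\'os--Tur\'an base case, the minimum-degree cleaning step, the rich-vertex double count, and the pigeonhole over $r$-tuples of $t$-subsets. The quantitative dependences you flag (choosing $T$ large relative to $t$, $r$, $\eps$, and then $n$ large relative to $T$) are exactly where the remaining work lies, and they go through in the standard way.
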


The above theorem asymptotically resolves Tur\'an's problem whenever $F$ is not bipartite (i.e., $\upchi(F) > 2$). The case of bipartite $F$ is still widely open and a very active area of research. K\"ov\'ari-S\'os-Tur\'an \cite{Kovari1954} gave a general upper bound that is believed to be tight for complete bipartite graphs $K_{s, t}$; the bound is known to be tight for $K_{2,2}, K_{3,3}$ \cite{bipartite-survey} and was recently shown to be tight when $t$ is sufficiently large relative to $s$ in breakthrough work leveraging randomized algebraic constructions \cite{bukh_bipartite}; see \cite{bipartite-survey} for a comprehensive survey of bipartite Tur\'an results.

Motivated by this line of inquiry, a more general, fundamental family of questions in extremal combinatorics are the so-called \emph{Tur\'an-type questions}, which seek to understand how large a graph-like object can be when we forbid certain substructures from appearing. Perhaps the most well-known such question is the hypergraph Tur\'an problem, a rich and fertile area of research which has attracted much attention and where even very simple-seeming questions are wide open; see \cite{keevash} for a survey of such extremal hypergraph results. Other Tur\'an-type problems of substantial interest include the rainbow Tur\'an problem (on edge-colored graphs) \cite{rainbow-turan} and the directed-graph Tur\'an problem \cite{brown_erdos_simonovits}.

In this work, we study natural Tur\'an problems for \emph{mixed graphs}, graph-like objects that can have both directed and undirected edges.

\begin{definition}\label{d:mixedgraph}
    A \emph{mixed graph} $G = (V, E)$ consists of vertex set $V$ and edge set $E$. Each edge $e \in E$ is a pair of distinct vertices $u, v \in V$,
    that is either \emph{undirected} (denoted $e = uv = vu$) or \emph{directed} (denoted $e = u\widecheck v = \widecheck v u$).
    For a directed edge $u\widecheck v$, the vertex $v$ is the \emph{head} vertex of the edge, and $u$ the \emph{tail}. There is at most one edge on any pair of vertices.
    Two vertices are considered adjacent if they are connected by any edge, regardless of whether the edge is directed or undirected.

    Let $\ecundir{G}$ and $\ecdir{G}$ be the number of undirected and directed edges in mixed graph $G$ respectively, and let $\alpha(G) = \frac{\ecundir{G}}{\binom{v(G)}2}$ and $\beta(G) = \frac{\ecdir{G}}{\binom{v(G)}2}$ be the \textit{undirected} and \textit{directed} \textit{edge densities} of $G$, respectively.
\end{definition}

Mixed graphs naturally appear in a variety of enumerative, combinatorial, and theoretical computer science applications.
Mixed graphs have been previously studied in a variety of contexts including spectral graph theory \cite{mixedgraph_adjmatrix1, mixedgraph_unitgain} and the extremal degree-diameter problem \cite{mixedgraph_degreediameter,mixedgraph_geodeticturan}.
They also arise naturally in the context of theoretical computer science, including the extremal problems of coloring \cite{mixedgraph_coloring1, mixedgraph_coloring2, mixedgraph_edgecoloring} and job scheduling \cite{mixedgraph_scheduling}; they have useful applications in object classification and labeling, social network models, and inference on Bayesian networks \cite{mixedgraph_multilabel,mixedgraph_objectclass,mixedgraph_pdnetworks,mixedgraph_networks}.
Specific Tur\'an problems on special families of mixed graphs (such as geodetic mixed graphs in~\cite{mixedgraph_geodeticturan}) have also been explored in recent work.
Here, we make a systematic study of a natural Tur\'an type quantity on mixed graphs, leveraging the below notion of subgraphs that arises in recent applications e.g.~\cite{mani_ksat,mani_ksat2}.

\begin{definition}
We say a mixed graph $F$ is a \emph{subgraph} of $G$ ($F\subseteq G$) if $F$ can be obtained from $G$ by deleting vertices, deleting edges, and forgetting edge directions. We say $G$ is \emph{$F$-free} if $F$ is not a subgraph of $G$ (see~\Cref{fig.subgraph}).
\end{definition}

\begin{figure}[ht]
    \centering
    \includegraphics[width=5cm]{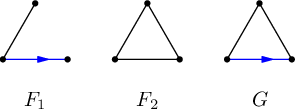}
    \caption{Both $F_1$ and $F_2$ are subgraphs of $G$; $F_1$ is not a subgraph of $F_2$.}
    \label{fig.subgraph}
\end{figure}

In an initial attempt to study extremal questions on mixed graphs, one might first consider a ``naive'' extremal problem,  maximizing the edge density $\alpha(G)+\beta(G)$ over all $F$-free $G$. 
However, this notion is uninteresting for mixed graphs; for all $F$ with at least one directed edge, $F \not\subseteq K_n$ and $\alpha(K_n) + \beta(K_n) = 1$, hence the maximal value of $\alpha(G)+\beta(G)$ over $F$-free $G$ would always be $1$. 

The above motivates the question we study here, trying to understand the \textit{tradeoff} between directed and undirected edges in $F$-free mixed graphs. More formally, we study the following extremal quantity:

\begin{definition}
    Let $F$ be a mixed graph. The \emph{Tur\'an density coefficient} $\theta(F)$ is the maximum $\rho$ such that
    \[
    \alpha(G) + \rho\beta(G) \le 1 + o(1)
    \]
    over all $F$-free $n$-vertex mixed graphs $G$.
    If $\beta(G) = o(1)$ over $F$-free $n$-vertex $G$, we say $\theta(F)=\infty$.
\end{definition}

Note that for all mixed graphs $F$, it is true that $\theta(F)\geq1$, because $\alpha(G) + \beta(G) \le 1$ for all mixed graphs $G$ (every pair of vertices is connected by at most one edge).

We verify that this notion is well-defined in~\Cref{prop.theta-exists}. Throughout this article, we will primarily focus on the ``dense'' case where $\theta(F) < \infty$, as opposed to the ``degenerate'' case where $\theta(F) = \infty$ and $F$ is bipartite.

The above extremal quantity naturally arises in theoretical computer science problems, notably the Bollob\'as-Brightwell-Leader conjecture about the number of distinct $k$-SAT functions (posed in 2003~\cite{BBL03}) was recently resolved for all $k \ge 2$ in a two-part argument~\cite{mani_ksat} that first reduced this open problem to a bound on the Tur\'an density coefficient of a triangle like mixed hypergraph (using a suitable generalization of the above mixed graph notion) and then subsequently gave a sufficiently strong bound on the associated Tur\'an density coefficient to prove the conjecture~\cite{mani_ksat2} (the $k = 2$ case of the Bollob\'as-Brightwell-Leader conjecture, originally proved in~\cite{AL07}, can be reduced to proving that if $F$ is a triangle with a single directed edge, then $\theta(F) > \log_2 3$). Other seemingly unrelated questions can also be restated as Tur\'an problems on mixed graphs; for example, extremal bounds on the number of edges in the square of an undirected graph (as in~\cite{FUR92}) can be reduced to showing that $\theta(F) = 2$ for the mixed triangle described above.

In our work, we completely characterize the Tur\'an density coefficients of mixed graphs, observing some similarities to Tur\'an densities of undirected graphs but also several key differences, which in part motivate the study of mixed Tur\'an coefficients. We will see for instance, that there is a natural oriented analogue of being bipartite (\cref{def.uncollapsible}) and that we can use this notion in conjunction with the chromatic number of the underlying undirected graph to give an analogue of the Erd\H{o}s-Stone-Simonovits theorem for mixed graphs.

\begin{definition}\label{def.uncollapsible}
A mixed graph is \emph{uncollapsible} if some two head vertices are adjacent or some two tail vertices are adjacent. Otherwise, it is \emph{collapsible}.
\end{definition}
Roughly speaking, collapsible mixed graphs will contain a large blowup of at least one directed edge (see \Cref{sec.ess_analogue} for details).

\begin{theorem}\label{mixedgraph-general}
Let $F$ be a mixed graph. Then, if $\upchi(F)$ is the chromatic number of the underlying undirected graph of $F$, we have the following:
    \begin{enumerate}[label=(\roman*)]
    \item  $\theta(F) = 1$ if and only if $F$ is uncollapsible.
    \item $\theta(F)=\infty$ if and only if $F$ admits a proper 2-coloring of the vertices such that all head vertices are the same color. Otherwise $\theta(F)\leq 2$.
    \item If $F$ has at most one directed edge, then 
    \[\theta(F)=1+\frac1{\undirectedchi{F}-2}.\]
    \item Otherwise, we have that
        \[1+\frac1{\undirectedchi{F}} \leq \theta(F) \leq 1+\frac1{\undirectedchi{F}-2}.\]
    \end{enumerate}
\end{theorem}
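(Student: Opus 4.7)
The plan is to prove the four parts of the theorem in tandem, combining explicit Tur\'an-type constructions (for upper bounds on $\theta(F)$) and supersaturation on the underlying undirected graph $\undirect G$ (for upper bounds on $\alpha(G) + \rho \beta(G)$ over $F$-free $G$, i.e.\ lower bounds on $\theta(F)$). Throughout I write $k = \undirectedchi{F}$. The decisive construction for the upper bounds on $\theta(F)$ in (iii), (iv), and the ``$\theta(F) \le 2$'' clause of (ii) is $T_{n,k-1}$ with every edge given an arbitrary orientation: its underlying graph has chromatic number $k-1 < \undirectedchi{F}$, so $\undirect F$ cannot embed and this mixed graph is automatically $F$-free, realizing $\alpha = 0$ and $\beta \to (k-2)/(k-1)$ and thus forcing $\theta(F) \le 1 + 1/(k-2)$. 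For the upper bound $\theta(F) \le 1$ in (i) with $F$ uncollapsible (say two heads adjacent; two-tails is symmetric), a sharper construction partitions $V = T \sqcup H$ with $|H| = \eps n$, makes $T$ a complete undirected graph, $H$ an independent set, and directs every $T$--$H$ edge from $T$ to $H$. Since all heads lie in the independent set $H$, $G$ is $F$-free; the computation $\alpha + (1+\delta)\beta = 1 + 2\delta\eps - O(\eps^2)$ exceeds $1$ for $\eps = \delta$, giving $\theta(F) \le 1$.

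For lower bounds on $\theta(F)$, I plan to apply supersaturation to $\undirect G$. In part (iii) with $F$ having the unique directed edge $u\widecheck v$: if $\alpha + \beta \le (k-2)/(k-1) + o(1)$, the desired bound $\alpha + ((k-1)/(k-2))\beta \le 1 + o(1)$ follows from $\beta \le (k-2)/(k-1)$; otherwise the undirected Erd\H{o}s--Stone--Simonovits theorem supplies $\Omega(n^{v(F)})$ copies of $\undirect F$ in $\undirect G$, and for each such copy, $F$-freeness forces the edge at the image of $uv$ to be undirected or oriented in the wrong direction in $G$. Double-counting pairs (copy of $\undirect F$, directed edge of $G$ at its $uv$-position) relates the excess $\alpha + \beta - (k-2)/(k-1)$ to $\beta$, yielding the sharp $\alpha + ((k-1)/(k-2))\beta \le 1 + o(1)$. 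The same scheme with multiple directed edges simultaneously constraining each supersaturated copy yields the weaker $\alpha + (1 + 1/k)\beta \le 1 + o(1)$ claimed in (iv).

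Finally, for the $\theta(F) = \infty$ direction in (ii) and the converses in (i) and (ii), I would apply K\"ov\'ari--S\'os--Tur\'an. If $F$ admits a proper $2$-coloring with all heads in one color class, then forbidding $F$ in $G$ forbids a fixed bipartite pattern in the bipartite graph on two copies of $V$ recording tail-head incidences of the directed edges of $G$, forcing $\beta = o(1)$ and hence $\theta(F) = \infty$. Conversely, when no such 2-coloring exists, the all-directed $T_{n,k-1}$ construction already yields $\beta = \Omega(1)$ and $\theta(F) < \infty$; a symmetric argument handles the ``collapsible $\Rightarrow \theta(F) > 1$'' direction of (i). The main obstacle I foresee is the (iv) lower bound: multiple directed edges in $F$ impose several correlated orientation constraints on each copy of $\undirect F$, and the clean double-count of (iii) must be replaced by a coarser analysis, producing the possibly-non-tight constant $1 + 1/\undirectedchi{F}$.
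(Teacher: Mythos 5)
The constructions you use for the upper bounds on $\theta(F)$ essentially coincide with the paper's: your two-part graph for uncollapsible $F$ is the paper's $M(x,n)$, the all-directed Tur\'an graph handles the upper bounds in (iii)--(iv), and your K\H ov\'ari--S\'os--Tur\'an argument for the $\theta(F)=\infty$ direction of (ii) is a clean (arguably cleaner) alternative to the paper's route. But the heart of the theorem is the lower bound on $\theta(F)$ in (iii), and your supersaturation-plus-double-counting plan breaks down there. Take $F=\overrightarrow{K_3}$, so $k=3$ and the claim is $\alpha+2\beta\le 1+o(1)$. Since the underlying $K_3$ is edge-transitive, any edge of a triangle of $\undirect{G}$ can serve as the image of the directed edge $u\widecheck v$, so $F$-freeness forces every triangle of $\undirect{G}$ to be entirely undirected; there are therefore \emph{no} pairs (copy of $\undirect F$, directed edge at its $uv$-position), and your double count reads $0=0$. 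What is left is exactly the hard statement: if $H=\undirect G$ has $e$ edges of which $d$ lie in no triangle of $H$, then $e+d\le(1+o(1))\binom n2$. The natural quantitative version of your idea --- for each such edge $xy$ one has $N_H(x)\cap N_H(y)=\emptyset$, hence $\deg x+\deg y\le n$, combined with Cauchy--Schwarz on $\sum_v\deg(v)^2$ --- yields only $e+d\le\tfrac{9}{16}n^2+o(n^2)$, strictly weaker than the required $\tfrac12 n^2$. The paper obtains the sharp constant by Zykov symmetrization and an induction on $r$ for $\overrightarrow{K_r}$ (\Cref{prop.complete-one-edge-turan}), and then reduces a general one-directed-edge $F$ to $\overrightarrow{K_{\upchi(F)}}$ via the blowup invariance $\theta(F)=\theta(F[t])$ (\Cref{prop.mixedgraph-blowup}), which itself rests on supersaturation plus the hypergraph blowup theorem. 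Neither ingredient appears in your outline, and I do not see how to repair the double count to reach sharpness.

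Part (iv) inherits this gap and adds another: the constant $1+1/\upchi(F)$ does not arise from a coarser orientation count but from a structural reduction, the head--tail collapse. One shows $F\subseteq F^\rhd[t]$, hence $\theta(F)\ge\theta(F^\rhd)=1+1/(\upchi(F^\rhd)-2)\ge 1+1/\upchi(F)$ because $\upchi(F^\rhd)\le\upchi(F)+2$; without some such device your scheme has no reason to produce that particular constant, and the ``collapsible $\Rightarrow\theta(F)>1$'' half of (i) rests entirely on these missing cases. Finally, a small but genuine error in (ii): when $\upchi(F)=2$ but $F$ admits no proper $2$-coloring with all heads monochromatic (e.g.\ the directed path $a\widecheck{b}$, $b\widecheck{c}$), your witness $T_{n,k-1}$ is the empty graph. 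The correct witness is $K_{\overrightarrow{t,t}}$, which is $F$-free precisely in this case and certifies $\theta(F)\le 2$.
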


\begin{remark}
  At first glance, it may be unclear if the above result is tight in general. In fact, both ends of the above inequality can be attained by infinitely many mixed graphs $F$; the general upper bound is achieved by mixed graphs with a single directed edge (as noted in case (iii)) and a tight instance of the lower bound is given in~\Cref{mixedgraph-general-lowerbound}.
\end{remark}

The above Erd\H{o}s-Stone-Simonovits-style result falls short of a characterization of the Tur\'an density coefficient $\theta(F)$. However, we are able to completely determine $\theta(F)$ with a variational characterization of $\theta(F)$ that determines the Tur\'an density coefficient as the minimum of a finite number of solutions to finite dimensional optimization problems. We give an informal statement below and defer the formal statement to~\cref{thm.theta-variational}.

\begin{theorem}[Informal]
Let $F$ be a mixed graph with at least one directed edge where $\theta(F) \in(1,\infty)$. There is some family $\mcB_F$ of mixed graphs on at most $v(F)$ vertices, such that 
  \[\theta(F) = \min_{B\in\mcB_F} \left\{\min_{\vb y \in \simplex{v(B)}}\left\{\frac{1-\vb y^\intercal U_B \vb y}{\vb y^\intercal D_B \vb y}\right\} \right\},\]
where $B$ is represented by $(U_B, D_B)$, an ordered pair of matrices that encode the undirected and directed edges of $B$, and $\simplex{v(B)}$ is the simplex in $\RR^{v(B)}$. 
\end{theorem}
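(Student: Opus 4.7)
The plan combines a blow-up construction for the upper bound on $\theta(F)$ with a symmetrization argument for the matching lower bound, in the spirit of the Motzkin--Straus characterization of clique numbers. First I would define $\mcB_F$ as the finite family of mixed graphs $B$ on at most $v(F)$ vertices whose balanced vertex blow-ups remain $F$-free; equivalently, the family of $B$ admitting no ``weak homomorphism'' from $F$ (one which respects edges and directions, allowing identification of non-adjacent $F$-vertices). Each $B \in \mcB_F$ is encoded by $(U_B, D_B)$, the symmetric $\{0,1\}$-matrices whose off-diagonal entries flag undirected and directed edges of $B$, respectively.

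For the upper bound $\theta(F) \leq \min_B \min_{\vb y}$, given $B \in \mcB_F$ and a probability vector $\vb y \in \simplex{v(B)}$, I would construct the mixed graph $B[\vb y, n]$ by replacing vertex $i$ of $B$ with an independent set of size $\lfloor y_i n\rfloor$ and joining parts $i$ and $j$ by a complete undirected or directed bipartite structure according to the edge between $i$ and $j$ in $B$. This $n$-vertex mixed graph is $F$-free by choice of $B$, and a direct count yields $\alpha(B[\vb y, n]) = \vb y^\intercal U_B \vb y + o(1)$ and $\beta(B[\vb y, n]) = \vb y^\intercal D_B \vb y + o(1)$. Plugging into the definition of $\theta(F)$ and taking the minimum over $\mcB_F$ and the simplex yields the claimed bound; the minimum is attained since $\mcB_F$ is finite and each simplex is compact, and $\vb y^\intercal D_B \vb y > 0$ somewhere by $\theta(F) < \infty$.

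For the reverse inequality, let $\rho$ denote the RHS and let $G$ be an $F$-free $n$-vertex mixed graph maximizing $\alpha + \rho\beta$; the goal is $\alpha(G) + \rho\beta(G) \leq 1 + o(1)$. I would apply a symmetrization procedure: iteratively clone or merge vertices of $G$ according to a score that weights directed incidences by $\rho$ and undirected ones by $1$, in a way that preserves $F$-freeness and weakly increases $\alpha + \rho\beta$. After enough steps, $G$ becomes a blow-up of a quotient mixed graph $B$ on $k$ vertices with weights $y_i = |V_i|/n$. A pigeonhole and supersaturation argument (leveraging the Erd\H{o}s--Stone--Simonovits-style~\Cref{mixedgraph-general}) forces $k \leq v(F)$, and $B \in \mcB_F$ since any blow-up of $B$ is $F$-free. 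Then $\alpha(G) + \rho\beta(G) = \vb y^\intercal U_B \vb y + \rho \vb y^\intercal D_B \vb y \leq 1$ by definition of $\rho$.

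The most technically delicate point is the symmetrization step. In the mixed-graph setting, the standard Zykov cloning move may fail to preserve $F$-freeness because two clones of a vertex remain non-adjacent and might jointly embed two non-adjacent vertices of $F$. Resolving this requires a carefully chosen local score that exploits the head/tail asymmetry of directed edges, or alternatively a mixed-graph compactness / limit-theoretic argument that passes through the continuous relaxation on the simplex and back. One must also handle the degenerate case where the quotient has $\vb y^\intercal D_B \vb y = 0$, which reduces via~\Cref{mixedgraph-general} to the classical Erd\H{o}s--Stone--Simonovits bound on the undirected substructure.
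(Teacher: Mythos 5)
Your easy direction is right and matches the paper: every $B$ whose balanced blowups are $F$-free, together with a weight vector $\vb y$, yields an $F$-free construction $B[\vb y,n]$ with $\alpha \to \vb y^\intercal U_B\vb y$ and $\beta \to \vb y^\intercal D_B\vb y$, forcing $\theta(F) \le \frac{1-\vb y^\intercal U_B\vb y}{\vb y^\intercal D_B\vb y}$; taking your (larger) family $\mcB_F$ rather than the paper's $\mcM_F$ is harmless for this inequality, and finiteness of the minimum follows from \Cref{prop.ktt} as you say.

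The genuine gap is the reverse inequality, which is where essentially all of the work in the paper lies. Your proposed mechanism --- Zykov-style cloning/merging of an extremal $F$-free $G$ into an exact blowup of a small quotient $B$ --- does not go through, for exactly the reason you flag yourself: cloning a vertex can create a new copy of $F$ through two non-adjacent images of the same vertex, so the move does not preserve $F$-freeness for general $F$. (The paper only uses this cloning argument for $F=\overrightarrow{K_{r+1}}$ in \Cref{prop.complete-one-edge-turan}, where it is safe precisely because every pair of vertices of a clique is adjacent.) Saying the fix ``requires a carefully chosen local score'' or ``a compactness/limit-theoretic argument'' is a placeholder, not a proof; no such local score is known to work, and the limit-theoretic route would itself be a substantial development. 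The paper instead adapts the Brown--Erd\H{o}s--Simonovits machinery: it passes to the max--min weighted degree problem (the graphs $Z^{(n)}_{\rho,F}$ of \Cref{def.mat-max-min-deg}), proves an augmentation lemma (\Cref{lem.augmentation}) showing that an $F$-free graph of too-high minimum weighted degree containing a large blowup of a condensed template forces a strictly denser $F$-free template, shows the set of templates reachable by augmentation is finite (\Cref{prop.bf-finite}), and concludes that blowups of the densest such template are asymptotically extremal (\Cref{lem.extremal}, giving \Cref{thm.matrix-maximal}). Two further points your sketch elides: the bound on the template size comes not from pigeonhole/supersaturation but from the Lagrangian fact that condensed templates have complete underlying undirected graph (\Cref{prop.lagrangians}), so a template of size $\ge \undirectedchi{F^\rhd}$ would contain $F^\rhd[t]\supseteq F$ in its blowups; and deducing the variational formula at $\rho=\theta(F)$ from the fixed-$\rho$ structural theorem requires the limiting argument in \Cref{thm.theta-variational} (apply the theorem at $\rho_k=\theta(F)+1/k$, extract a constant subsequence of optimal templates, and use continuity of $g_\rho$), which your outline skips.
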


To show the above result, we leverage and generalize a technique of Brown, Erd\H{o}s, and Simonovits~\cite{brown_erdos_simonovits}.
Our result sets the Tur\'an coefficient of mixed graphs in sharp contrast to more general Tur\'an type problems; for instance, graph homomorphism inequalities are in general not even necessarily \textit{decidable}~\cite{hatami-norime}, which is a very different situation than the finite-time algorithm to verify bounds on $\theta(F)$ implied by the above representation.

The above expression nevertheless seems somewhat complicated, and one might hope that a simple modification of~\cref{mixedgraph-general} might be sufficient to understand $\theta(F)$ in general.
However, the extremal behavior of general mixed graphs turns out to be much more nuanced than the case of undirected graphs.
In \Cref{thm.irrational-theta} we construct a relatively simple mixed graph $F$ with irrational density coefficient,
in contrast with the Erd\H os-Stone-Simonovits theorem which implies graph Tur\'an densities are of the form $1 - 1/k$ for positive integer $k$.
Based on this, one might wonder how complicated $\theta(F)$ can be.
The complexity of Tur\'an-type quantities in discrete structures varies considerably. 
Tur\'an densities of graphs are always rational, as noted above, as are digraphs \cite{brown_erdos_simonovits_2}.
On the other hand, there is a range of results on the existence of hypergraphs with irrational Tur\'an densities and related concepts. For instance, \cite{lagrangian_density} constructed a relatively simple example of a $3$-graph with irrational Lagrangian densities; it was recently shown in \cite{hypergraph-irrational} that there exist hypergraphs with irrational Tur\'an densities; and Tur\'an densities of families of hypergraphs can have arbitrarily high algebraic degree \cite{pikhurko-alg-degree, Pikhurko_possible_densities};
and families of multigraphs can even have transcedental Tur\'an densities \cite{multigraphs-transcedental}.
Here, we show that Tur\'an density coefficients of mixed graphs are algebraic:

\begin{theorem}\label{thm.mixedgraph-algebraic}
    Let $F$ be a mixed graph such that $\theta(F) < \infty$. Then $\theta(F)$ is an algebraic number.
\end{theorem}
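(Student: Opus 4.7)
My plan is to combine the variational characterization of $\theta(F)$ stated informally above (and formalized in \Cref{thm.theta-variational}) with the Tarski--Seidenberg theorem on semi-algebraic sets. By that characterization, since $\theta(F) < \infty$, I can write
\[
\theta(F) \;=\; \min_{B \in \mcB_F} \theta_B, \qquad \theta_B \;:=\; \min_{\vb y \in \simplex{v(B)}} \frac{1 - \vb y^\intercal U_B \vb y}{\vb y^\intercal D_B \vb y},
\]
where $\mcB_F$ is a finite family of mixed graphs on at most $v(F)$ vertices. Since the minimum of finitely many algebraic numbers is algebraic, it suffices to establish that every finite $\theta_B$ is algebraic.

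So fix $B \in \mcB_F$ with $\theta_B < \infty$, and set $n = v(B)$. Because $U_B$ and $D_B$ are $\{0,1\}$-matrices, the polynomials $f(\vb y) = 1 - \vb y^\intercal U_B \vb y$ and $g(\vb y) = \vb y^\intercal D_B \vb y$ have integer coefficients, and the simplex $\simplex{n} = \{\vb y \in \RR^n : y_i \geq 0 \text{ for all } i, \; \sum_i y_i = 1\}$ is a semi-algebraic set defined over $\mathbb{Q}$. For any $t \in \RR$, the inequality $\theta_B \leq t$ is equivalent to the first-order condition
\[
\exists \, \vb y \in \RR^n \;:\; \vb y \in \simplex{n}, \; g(\vb y) > 0, \text{ and } f(\vb y) - t \cdot g(\vb y) \leq 0.
\]
By the Tarski--Seidenberg theorem, the set $T \subseteq \RR$ of such $t$ is semi-algebraic over $\mathbb{Q}$. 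Every semi-algebraic subset of $\RR$ is a finite union of isolated points and (possibly unbounded) intervals whose endpoints are real algebraic numbers; hence $\theta_B = \inf T$ is real algebraic, completing the argument.

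The main obstacle is not in the present theorem but in establishing the variational characterization \Cref{thm.theta-variational}; once $\theta(F)$ has been reduced to a finite minimum of finite-dimensional optimization problems with polynomial data over $\mathbb{Q}$, algebraicity is essentially automatic, since any extremal value of a ratio of polynomials with rational coefficients on a compact semi-algebraic set defined over $\mathbb{Q}$ must be algebraic. An alternative to invoking Tarski--Seidenberg would be a direct KKT analysis: restricting to each face of $\simplex{n}$, the critical points of $f/g$ satisfy a polynomial system with rational coefficients, so the critical values form an algebraic subset of $\RR$, and $\theta_B$ is the minimum of finitely many such critical values together with the minima on lower-dimensional faces (handled by induction). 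Either route yields the claim.
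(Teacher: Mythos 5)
Your argument is correct, and once the variational characterization (\Cref{thm.theta-variational}) is granted it reaches algebraicity by a genuinely different route than the paper. The paper's proof is constructive: setting $\rho = \theta(F)$, it passes from the optimal matrix $B^*$ to a condensed principal submatrix $B$, invokes \Cref{lem.convergence} to show that the optimizer $\vb y^*$ is the unique, strictly positive solution of a linear system whose coefficient matrix has entries in $\{0,1,\rho\}$ and has full rank, so that $\vb y^* = A^{-1}\vb 1$ has entries that are quotients of integer polynomials in $\rho$; substituting into $\rho = (1-{\vb y^*}^\intercal U \vb y^*)/({\vb y^*}^\intercal D \vb y^*)$ and clearing denominators produces an explicit integer polynomial vanishing at $\theta(F)$. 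Your Tarski--Seidenberg argument bypasses the condensation step, the convergence lemma, and all uniqueness/positivity considerations: the set $T=\{t : \theta_B \le t\}$ is definable over $\mathbb{Q}$ by a first-order formula with one existential block, hence is a finite union of points and intervals with algebraic endpoints, and $\theta_B = \inf T$ regardless of whether the minimum is attained (since $T$ is either $[\theta_B,\infty)$ or $(\theta_B,\infty)$). This is shorter and more robust; what it gives up is the explicit minimal-polynomial computation and the attendant degree control, which is precisely what the paper's method supplies and then exploits in the proof of \Cref{thm.alg-degree}. Two minor points: \Cref{thm.theta-variational} only applies when $F$ has a directed edge and $\theta(F)\in(1,\infty)$, so you should first dispatch the cases of undirected $F$ and $\theta(F)=1$, where $\theta(F)$ is rational by \Cref{prop.undirected} and \Cref{mixedgraph-general} (the paper's own proof elides this too); and $D_B$ has entries in $\{0,2\}$ rather than $\{0,1\}$, which is immaterial since only integrality of the coefficients is used.
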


Furthermore, every algebraic degree is actually attained in the Tur\'an density coefficient of some finite family of mixed graphs (where for family $\mcF$ of mixed graphs, $G$ is $\mcF$-free if $G$ is $F$-free for all $F\in \mcF$). This suggests a particularly rich structure encoded by the Tur\'an density coefficients of mixed graphs.
\begin{theorem}\label{thm.alg-degree}
For every positive integer $k$ there exists a finite family of mixed graphs having Tur\'an density coefficient with algebraic degree $k$.
\end{theorem}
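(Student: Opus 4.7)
The plan is to exploit the variational characterization of~\cref{thm.theta-variational}, suitably extended from single forbidden graphs to finite families, in order to reduce the problem to engineering a specific ratio-of-quadratic-forms optimization whose optimum is algebraic of degree exactly $k$. First, I would extend the characterization: the Brown--Erd\H{o}s--Simonovits-style argument underlying~\cref{thm.theta-variational} should generalize to any finite family $\mcF$, giving a finite collection $\mcB_\mcF$ of ``template'' mixed graphs such that
\[\theta(\mcF) = \min_{B \in \mcB_\mcF} \min_{\vb y \in \simplex{v(B)}} \frac{1 - \vb y^\intercal U_B \vb y}{\vb y^\intercal D_B \vb y}.\]
In particular, $\theta(\mcF)$ is determined by Lagrange multiplier conditions on finitely many simplices, so it is algebraic (reaffirming~\cref{thm.mixedgraph-algebraic}).

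Next, I would design a template mixed graph $B_k$ on $O(k)$ vertices whose Lagrangian optimization produces a polynomial of the desired form. At an interior critical point $\vb y$ with value $\theta$, the KKT conditions give $(U_{B_k} + \theta D_{B_k}) \vb y \propto \mathbf{1}$ subject to $\mathbf{1}^\intercal \vb y = 1$; eliminating $\vb y$ and substituting back into the ratio yields a single scalar equation $P_{B_k}(\theta) = 0$ whose degree is bounded by a function of $v(B_k)$. By choosing $B_k$ with a carefully arranged block or companion-matrix-like pattern of directed and undirected edges (for example, a blowup of a directed cycle with specifically tuned chords), the polynomial $P_{B_k}$ can be made to contain a prescribed integer polynomial $p_k(x)$ of degree $k$ as an irreducible factor, with irreducibility over $\mathbb{Q}$ ensured via Eisenstein's criterion applied to a suitable translate $p_k(x+c)$.

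Finally, I would assemble the family $\mcF_k$ from a ``base'' family of forbidden mixed graphs that forces $B_k \in \mcB_{\mcF_k}$, augmented by auxiliary gadget forbidden mixed graphs designed to exclude every other template $B' \in \mcB_{\mcF_k}$ whose optimization value is $\leq \theta(B_k)$. The resulting family satisfies $\theta(\mcF_k) = \theta(B_k)$, which has algebraic degree exactly $k$ by construction. A natural tactic is to build $B_k$ to contain a distinguishing small subgraph unique to $B_k$ in $\mcB_{\mcF_k}$, so that competing templates are easily recognized and excluded by a uniform family of gadgets.

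The main obstacle lies in the template-engineering step: one must simultaneously ensure that (i) the minimum of the $B_k$-optimization is attained at an \emph{interior} critical point of $\simplex{v(B_k)}$, so that the algebraic degree does not collapse via restriction to a lower-dimensional boundary face; (ii) the factor of $P_{B_k}$ selected by this critical point has degree exactly $k$ rather than a proper divisor; and (iii) this factor is irreducible over $\mathbb{Q}$. Issue (i) requires choosing $U_{B_k}, D_{B_k}$ so that the ratio's infimum on each boundary face strictly exceeds its interior value, typically by attaching edges that become ``costly'' on every face but cheap in the interior. Issue (ii) calls for a block or symmetry structure reducing the multivariable system to a one-variable polynomial of degree $k$. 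Issue (iii) is a classical number-theoretic requirement handled by Eisenstein-type analysis on a prescribed family $\{p_k\}_{k \geq 1}$. A secondary difficulty, verifying that the gadgets do not inadvertently exclude $B_k$, is manageable by the marker-subgraph device described above.
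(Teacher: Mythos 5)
Your high-level architecture matches the paper's: extend the variational characterization to finite families, engineer a template matrix $B_k$ whose simplex optimization has value of algebraic degree $k$, and build a forbidden family that selects exactly that template. However, the two steps that carry all the mathematical content are left as unexecuted ``engineering,'' and the specific tactics you sketch for them are not shown to work. First, the family construction: your plan of a ``base family plus auxiliary gadgets to exclude competing templates, with a marker subgraph to protect $B_k$'' is vague and harder than necessary. The paper's Lemma~\ref{l.find-family} does this in one stroke: take $\mcF$ to be \emph{all} mixed graphs on at most $r+1$ vertices that do not embed in any blowup of $B_k$. Then $\overrightarrow{K_{r+1}}\in\mcF$ caps the size of any competing template at $r$, and since every template in $\mcM_\mcF$ has complete underlying undirected graph and is $\mcF$-free, it must embed in $\MG{B_k}{\vb 1}$ itself, i.e.\ be a principal submatrix of $B_k$ --- so its optimization value can only be larger. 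Your proposal gives no mechanism guaranteeing that every competing template is excluded or dominated, and the difficulty you flag (gadgets accidentally excluding $B_k$) is real and unresolved in your write-up.

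Second, and more seriously, the template construction is an inverse problem that you assume can be solved rather than solving it. You posit that a ``companion-matrix-like pattern'' or ``directed cycle with tuned chords'' can be arranged so that the eliminant polynomial contains a \emph{prescribed} irreducible $p_k$ of degree $k$ as a factor, with the optimum landing on a root of that factor and at an interior critical point. None of this is verified, and prescribing the factor in advance is not how one escapes the difficulty: even granting an interior critical point, you must show the relevant factor has degree exactly $k$ and that $\theta(\mcF_k)$ is a root of the irreducible factor rather than some other factor of the eliminant. The paper goes in the forward direction instead: it defines $B_k$ of size $2k+1$ by an explicit two-row recursive augmentation, proves via a negative-definite Hessian that the optimum is interior and derives the closed-form recursion $g_\rho(B_{k+1}) = \rho^2(2-g_\rho(B_k))/(2\rho(2-g_\rho(B_k))-1)$, hence $g_\rho(B_k)=P_k(\rho)/Q_k(\rho)$ with explicit integer-polynomial recursions; it then shows $P_k-Q_k$ has degree $2k$, constant term $\pm1$, all other coefficients even, and leading coefficient $\pm 2$, so its reciprocal is Eisenstein at $2$ and $P_k-Q_k$ is itself irreducible. (Odd degrees require a separate truncated construction $B_k'$, a parity issue your proposal does not address.) Until you exhibit a concrete $B_k$ and verify interiority, the degree count, and irreducibility, the proposal is a plan rather than a proof.
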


It remains an interesting question: does every algebraic degree arise as the Tur\'an density coefficient of a \textit{single} mixed graph? In other words, for every positive integer $k$ does there exist a mixed graph $F$ such that $\theta(F)$ has algebraic degree $k$?
\newline

\subsection*{Organization}
We begin by introducing some notation and basic results concerning the Tur\'an problem on mixed graphs in~\Cref{sec.prelims}. In~\Cref{sec.ess_analogue}, we focus our attention on proving~\Cref{mixedgraph-general}, a weaker analogue of the Erd\H{o}s-Stone-Simonovits theorem of extremal graph theory. In~\Cref{sec.variational}, we develop a variational characterization of $\theta(F)$, leveraging this representation several times in~\Cref{sec.algebraic-degree} to study the algebraic degree of $\theta(F)$.

\section{Preliminaries}\label{sec.prelims}

\begin{notation}\label{notation}
    We let $[r] = \{1, \ldots, r\}$. We use $\ZZ_{\geq0}$ and $\ZZ_{>0}$ to denote the set of nonnegative and (strictly) positive integers, respectively; also, $\RR_{\geq0}$ and $\RR_{>0}$ denote the nonnegative and (strictly) positive reals, respectively. We use $\vb 1$ to denote the vector of all $1$s; dimension will be clear from context.
    
    For mixed graph $F$, we let $\undirect{F}$ denote the \emph{underlying undirected graph of $F$}, obtained by forgetting the directions of all directed edges (but retaining every edge).
    For $r \in \ZZ_{>0}$, we use $V = V_1 \sqcup \cdots \sqcup V_r$ to refer to the disjoint union of sets $V_1, \ldots, V_r.$
    
    Asymptotic notation will always refer to the limit for large $n$. We will say that $f(n) = o(g(n))$ if $\lim_{n \to \infty} f(n)/g(n) = 0$ (and in particular $f(n) = o(1)$ if $\lim_{n\to\infty}f(n) = 0$); we say $f(n) = O(g(n))$ if there exists constant $C > 0$ such that $f(n) \le C g(n)$ for sufficiently large $n$; and $f(n) = \Omega(g(n))$ if there exists constant $C > 0$ such that $f(n) \ge C g(n)$  for sufficiently large $n$.
\end{notation}

\subsection{Mixed graph fundamentals}\label{subsec.fundamentals}
Recall from~\Cref{d:mixedgraph} that a mixed graph has edges that can be undirected or directed (c.f.~\Cref{fig.mixedgraphs}).
    \begin{figure}[ht]
    \centering
    \begin{subfigure}{.4\textwidth}
    \centering
    \includegraphics[width=3cm]{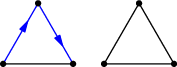}
    \caption*{Mixed graphs}
    \end{subfigure}
    ~
    \begin{subfigure}{0.4\textwidth}
    \centering
    \includegraphics[width=3cm]{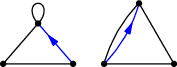}
    \caption*{Not mixed graphs}
    \end{subfigure}
    \caption{The first two graphs are examples of mixed graphs (note that a fully undirected graph is still a mixed graph); the last two are not (self-loops and vertices connected by multiple edges are disallowed).}
    \label{fig.mixedgraphs}
    \end{figure}

Note if $F\subseteq G$, then $\theta(F) \geq \theta(G)$. We first verify that $\theta(F)$ is well defined.

\begin{obs}[Existence of $\theta(F)$]\label{prop.theta-exists}
    Fix mixed graph $F$. If there exists $\eps > 0$ and a sequence $\{G_n\}$ of distinct $F$-free mixed graphs with $\beta(G_n) \ge \eps$, then there exists a maximum $\rho$ such that 
    \[
    \limsup_{n\to\infty} \alpha(G_n) + \rho \cdot \beta(G_n) \leq 1.
    \]
\end{obs}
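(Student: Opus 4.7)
The plan is to define $f(\rho) := \limsup_{n \to \infty}(\alpha(G_n) + \rho \beta(G_n))$ on $\RR$ and show that the sublevel set $S := \{\rho \in \RR : f(\rho) \le 1\}$ attains its supremum, by verifying it is non-empty, closed, and bounded above.

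The key technical point is continuity of $f$. For each $n$, the map $\rho \mapsto \alpha(G_n) + \rho \beta(G_n)$ is affine with slope $\beta(G_n) \in [0,1]$, so the family is uniformly $1$-Lipschitz. Using the elementary inequality $\limsup_n a_n - \limsup_n b_n \leq \limsup_n (a_n - b_n)$ in both directions, this uniform Lipschitz estimate is inherited by $f$, giving $|f(\rho) - f(\rho')| \leq |\rho - \rho'|$. (Equivalently, $f$ is a finite-valued convex function on $\RR$, since the $\limsup$ of affine functions is convex via subadditivity, and any such function is automatically continuous.) Consequently $S = f^{-1}((-\infty, 1])$ is closed.

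Non-emptiness and boundedness of $S$ are then immediate from the hypotheses. At $\rho = 0$, $f(0) = \limsup_n \alpha(G_n) \leq 1$ since $\alpha(G_n) \in [0,1]$, so $0 \in S$. For any $\rho > 1/\eps$, the assumption $\beta(G_n) \geq \eps$ yields $\alpha(G_n) + \rho \beta(G_n) \geq \rho \eps > 1$ for every $n$, whence $f(\rho) > 1$ and $S \subseteq (-\infty, 1/\eps]$. A non-empty, closed, bounded-above subset of $\RR$ has a maximum, completing the proof. The main point requiring mild care is the Lipschitz/continuity step, since $\limsup$ does not commute with arbitrary operations; once that is in hand, the role of the hypothesis $\beta(G_n) \geq \eps$ is precisely to force $f$ to grow unboundedly in $\rho$, ensuring the sublevel set is bounded and hence that the maximum exists.
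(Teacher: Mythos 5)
Your proposal is correct and takes essentially the same route as the paper: both define $f(\rho) = \limsup_{n\to\infty}\bigl(\alpha(G_n) + \rho\,\beta(G_n)\bigr)$ and derive continuity of $f$ from the uniform bound $\beta(G_n)\le 1$, so that the sublevel set $\{\rho : f(\rho)\le 1\}$ is closed and its supremum is attained. You have simply spelled out the details the paper leaves implicit, namely non-emptiness at $\rho=0$ and the use of $\beta(G_n)\ge\eps$ to bound the sublevel set above.
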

\begin{proof}
    Let $f(\rho) \coloneqq \limsup_{n\to\infty} \alpha(G_n) + \rho \beta(G_n)$.
    Note that $f(\rho)$ is nondecreasing and bounded between $0$ and $\max(1, \rho)$. Since $\beta(G) \le 1$ for any mixed graph $G$, $f$ is continuous in $\rho$. 
\end{proof}

Many of the standard tools for analyzing extremal graphs extend to mixed graphs. We first observe a supersaturation analogue of~\cite{supersaturation}.

\begin{definition}
    For mixed graph $G$ and $S \subseteq V(G)$, the \emph{induced subgraph} $G[S]$ is the mixed graph with vertex set $S$ and edge set comprising edges in $G$ with both endpoints in $S$.
\end{definition}
\begin{lemma}[Supersaturation]\label{prop.kpdg-supersat}
Fix $\eps > 0$. Given mixed graph $F$ with $\theta(F)<\infty$, there exists constant $c = c(\eps, F)>0$ such that any $n$-vertex mixed graph $G$ with
    $\alpha(G) + \theta(F) \beta(G)\geq 1+\eps$
    must contain at least $c\cdot n^{v(F)}$ copies of $F$ for $n$ sufficiently large.
\end{lemma}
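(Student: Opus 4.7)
The plan is to prove supersaturation via a standard sampling argument: use the definition of $\theta(F)$ to ensure that sufficiently large $F$-free mixed graphs cannot exceed the density threshold, average over random induced subgraphs of a carefully chosen size, and then double-count copies of $F$ across these subsets.

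First, I would choose an integer $k = k(\eps, F)$ large enough that every $F$-free mixed graph $H$ on $k$ vertices satisfies $\alpha(H) + \theta(F)\beta(H) \leq 1 + \eps/2$; such a $k$ exists because $\theta(F)$ is the maximal $\rho$ for which $\alpha + \rho\beta \leq 1 + o(1)$ holds over all $F$-free mixed graphs (otherwise one could extract an infinite sequence of $F$-free mixed graphs of growing size violating this bound). Contrapositively, any $k$-vertex mixed graph with $\alpha + \theta(F)\beta > 1 + \eps/2$ must contain a copy of $F$.

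Next I would sample a uniformly random $k$-subset $S \subseteq V(G)$ and study $X_S \coloneqq \alpha(G[S]) + \theta(F)\beta(G[S])$. Each fixed edge of $G$ survives in $G[S]$ with probability $\binom{k}{2}/\binom{n}{2}$, so linearity of expectation gives $\EE[\alpha(G[S])] = \alpha(G)$ and $\EE[\beta(G[S])] = \beta(G)$, hence $\EE[X_S] \geq 1 + \eps$. Since each pair of vertices supports at most one edge, $\alpha + \beta \leq 1$ pointwise and therefore $X_S \leq \theta(F)$; a Markov-type bound then yields
\[
\Pr[X_S > 1 + \eps/2] \geq \frac{\eps/2}{\theta(F) - 1 - \eps/2} =: p > 0,
\]
a constant depending only on $\eps$ and $F$ (the lemma is vacuous if $\theta(F) \leq 1 + \eps/2$, so the denominator may be taken positive). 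By the choice of $k$, each $k$-subset witnessing $X_S > 1 + \eps/2$ induces a subgraph of $G$ containing at least one copy of $F$.

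Finally, double-counting finishes the proof: since each copy of $F$ in $G$ lies in exactly $\binom{n - v(F)}{k - v(F)}$ subsets of size $k$, the number of copies $N_F$ satisfies $N_F \cdot \binom{n - v(F)}{k - v(F)} \geq p\binom{n}{k}$, giving $N_F = \Omega(n^{v(F)})$ with implicit constant depending on $\eps$ and $F$. The argument follows a standard Erd\H os--Simonovits supersaturation template and presents no serious obstacle. The only mildly delicate point is the first step, confirming the $o(1)$ bound in the definition of $\theta(F)$ holds uniformly over all $F$-free mixed graphs of a given order rather than only along a single extremal sequence; this uniformity follows from the same continuity reasoning as in \Cref{prop.theta-exists}.
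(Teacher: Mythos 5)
Your proposal is correct and follows essentially the same route as the paper's proof: fix a threshold order $k$ from the definition of $\theta(F)$ so that dense $k$-vertex induced subgraphs must contain $F$, apply linearity of expectation over a uniformly random $k$-subset, use a reverse-Markov bound on the bounded variable $\alpha(G[S])+\theta(F)\beta(G[S])\le\theta(F)$, and double count. The only differences are cosmetic (a slightly different constant in the reverse-Markov step and a more direct final count), and your closing remark correctly identifies and resolves the one delicate point about uniformity of the $o(1)$ in the definition of $\theta(F)$.
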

\begin{proof}
Choose $n_0 \in \ZZ_{>0}$ such that any mixed graph $H$ on $n_0$ vertices with
    $\alpha(H) + \theta(F) \beta(H)\geq 1+\eps/2$ contains $F$ as a subgraph.
    Fix integer $n>n_0$ and any mixed graph $G$ on $n$ vertices where
    $\alpha(G) + \theta(F) \beta(G)\geq 1+\eps$.
    Let $S$ be a subset of $n_0$ vertices from $V(G)$ chosen uniformly at random, so 
    \begin{equation*}\label{eq.1-plus-epsilon}
    \EE_S \big[\alpha(G[S]) + \theta(F)\beta(G[S]) \big] = \alpha(G) + \theta(F) \beta(G)\geq 1+\eps.
    \end{equation*}
    Note that for random variable $X$ supported in $[0, \ell]$ and $\delta\in [0,\ell]$, we have $\P[X \ge \EE[X] - \delta ] \ge \delta/\ell$. Thus,
    \begin{equation}\label{eq.s-sub-h}
    \P\left[\alpha(G[S]) + \theta(F)\beta(G[S])\geq 1+\frac{\eps}2\right] \geq\frac{\eps}{2\theta(F)}.
    \end{equation}
    When $\alpha(G[S]) + \theta(F)\beta(G[S])\geq 1+\frac{\eps}2$, by assumption $G[S]$ must contain a copy of $F$ as a subgraph and thus if $T\subseteq S$ is a uniformly random subset of size $v(F)$, then 
    $\P\big[F \subseteq G[T]\big] \geq 1/{\binom{n_0}{v(F)}}.$ Since every copy of $F$ appears in at most $\binom{n - v(F)}{n_0 - v(F)}$ subsets $S$, the number of copies of $F$ in $G$ is at least 
    \[\frac{\eps}{2\theta(F) \binom{n_0}{v(F)}} \cdot \frac{\binom{n}{n_0}}{\binom{n - v(F)}{n_0 - v(F)}}=
    \Omega_{F, \eps}\left(n^{v(F)}\right).\]
\end{proof}

\begin{definition}\label{def.blowup-mixed}
Given a mixed graph $F$ and an integer $t\geq2$, let $F[t]$ denote the \emph{balanced $t$-blowup} of $F$, obtained by replacing each vertex $v_i\in V(F)$ by $t$ copies $v_{i,1}, \dots, v_{i, t}$; each undirected edge $v_i v_j$ with $t^2$ undirected edges $v_{i,k}v_{j,\ell}$ ($1\leq k, \ell \leq t$); and each directed edge $v_i \widecheck v_j$ with $t^2$ directed edges
$v_{i,k}\widecheck v_{j,l}$ ($1\leq k, l \leq t$) (see~\Cref{fig.ex2}).
\end{definition}
\begin{figure}[ht]
\centering
\includegraphics[width=6cm]{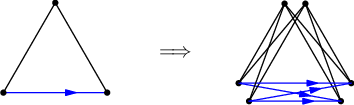}
\caption{A balanced $2$-blowup of a mixed graph}
\label{fig.ex2}
\end{figure}

\begin{lemma}[Blowups]\label{prop.mixedgraph-blowup}
Let $F$ be a mixed graph with $\theta(F)<\infty$ and $t \in \ZZ_{>0}$. Then $\theta(F) = \theta(F[t])$.
\end{lemma}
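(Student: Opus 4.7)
The plan is to prove the two inequalities $\theta(F[t]) \le \theta(F)$ and $\theta(F) \le \theta(F[t])$ separately. The first is immediate: since $F$ arises as a subgraph of $F[t]$ (take one vertex from each blowup part), every $F$-free mixed graph is also $F[t]$-free, so the condition defining $\theta(F[t])$ constrains a strictly larger family than the one defining $\theta(F)$.

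The substantive direction is $\theta(F) \le \theta(F[t])$. Set $k = v(F)$ and suppose for contradiction that $\theta(F[t]) < \theta(F)$; then there exist $\varepsilon > 0$ and arbitrarily large $F[t]$-free mixed graphs $G$ on $n$ vertices with $\alpha(G) + \theta(F)\beta(G) \ge 1+\varepsilon$. The supersaturation statement in \Cref{prop.kpdg-supersat} yields a constant $c = c(\varepsilon, F) > 0$ such that $G$ contains at least $cn^k$ labeled copies of $F$. To ensure vertex-disjointness of the parts that will eventually form $F[t]$, I would first partition $V(G)$ into $k$ nearly-equal sets $W_1,\dots,W_k$ and retain only those copies $\phi$ with $\phi(v_i)\in W_i$ for each $i$; an averaging argument preserves a $\Omega(1)$ fraction of copies, producing a $k$-partite $k$-uniform hypergraph $\mathcal H$ on parts $W_1,\dots,W_k$ with $\Omega(n^k)$ edges.

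Next I would invoke a multipartite analog of Kővári–Sós–Turán for $k$-uniform hypergraphs: any $k$-partite $k$-uniform hypergraph on parts of size $n$ with $\Omega(n^k)$ edges contains a complete $k$-partite sub-hypergraph $K^{(k)}_{t,\dots,t}$ for $n$ sufficiently large. Applied to $\mathcal H$, this produces pairwise disjoint sets $U_i \subseteq W_i$ of size $t$ such that every tuple in $U_1 \times \cdots \times U_k$ is a labeled copy of $F$ in $G$. For any edge of $F$ between $v_i$ and $v_j$ (undirected, or directed in some orientation) and any $(u,u') \in U_i \times U_j$, extending $(u,u')$ to a full tuple in the box produces an $F$-copy in $G$ witnessing an edge of the required type (and orientation, if directed) between $u$ and $u'$. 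Ranging over all such $i, j, u, u'$ realizes $F[t]$ as a subgraph of $G$, contradicting $F[t]$-freeness.

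The main obstacle is the multipartite hypergraph KST step. This is a well-known generalization but not fully routine: I would prove it by induction on $k$, with the $k=2$ case being ordinary KST. In the inductive step, a Cauchy–Schwarz (or dependent-random-choice) argument identifies $\Omega(n^{k-1})$ heavy $(k-1)$-tuples whose common neighborhood in the last part has size $\Omega(n)$; the inductive hypothesis then gives a $(k-1)$-dimensional box of side $t$ among these heavy tuples, after which one more KST-style application on the box's common neighborhood supplies the $t$-th direction. Granted this lemma, the pre-partitioning, the conversion of a hypergraph box into an $F[t]$-subgraph, and the matching of edge orientations are routine.
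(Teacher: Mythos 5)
Your proposal is correct and follows essentially the same route as the paper: the easy direction from $F\subseteq F[t]$, then supersaturation (\Cref{prop.kpdg-supersat}), a random equitable $k$-partition to build a $k$-partite $k$-uniform auxiliary hypergraph with $\Omega(n^k)$ edges, and the hypergraph box theorem to extract a $K^{(k)}_{t,\dots,t}$ whose box of labeled $F$-copies forces every cross-pair to carry an edge of the correct type and orientation, yielding $F[t]\subseteq G$. The only difference is that the paper cites the box theorem (Theorem 2.2 of the Keevash survey) rather than reproving it by induction as you sketch.
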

\begin{proof}
    Suppose otherwise; fix integer $t\geq2$ and mixed graph $F$ on $r$ vertices $w_1, \ldots, w_r$ where $\theta(F)<\infty$ and $\theta(F)\neq\theta(F[t])$.
    Since $F\subseteq F[t]$, it is true that $\theta(F)> \theta(F[t])$.
    Consequently, for some $\eps > 0$, there are $F[t]$-free mixed graphs $G$ on $n$ vertices for arbitrarily large $n$ where 
    \[\alpha(G) + \theta(F) \beta(G) > 1+\eps.
    \] 
    \Cref{prop.kpdg-supersat} implies that $G$ contains at least $c n^r$ copies of $F$ for some $c>0$. By randomly equitably partitioning $V(G) = V_1 \sqcup \cdots \sqcup V_r,$ we can find an equitable $r$-partition of $V(G)$ where the number of copies of $F$ in $G$ with $w_i \in V_i$ is at least $c' n^r$ where $c'=c/(2r)^r$ (each copy of $F$ in $G$ has each $w_i \in V_i$ with probability at least $1/(2r)^r$).

    Consider auxiliary $r$-partite, $r$-uniform hypergraph $\mcH$ with $V(\mcH) = V_1 \sqcup \cdots \sqcup V_r$ where hyperedge $(v_1, \ldots, v_r) \in E(\mcH)$ if and only if the $r$ vertices form a copy of $F$ in $G$ with $v_i \in V_i$. By construction $\mcH$ has at least $c' n^r$ edges. The result then follows by considering the corresponding hypergraph blowup result (c.f. Theorem 2.2 of~\cite{keevash}), noting that if $K_r^{r}$ is the $r$-uniform hypergraph with $r$ vertices and a single edge, a $K_r^r[t]$-free $r$-uniform hypergraph has $o(n^r)$ edges for all positive integers $t$. Hence, for $n$ sufficiently large, $\mcH$ contains a copy of $K_r^r[t]$ yielding a corresponding copy of $F[t]$ in $G$, a contradiction.
\end{proof}

\subsection{Bipartite and undirected mixed graphs}\label{subsec.simple}

As a warmup, we first discuss two very simple families of mixed graphs. A \emph{bipartite} mixed graph is a mixed graph with $\upchi(F) \leq 2$.

We establish our first results on $\theta(F)$, for some simple families $F$. These will also be useful later on in~\Cref{sec.ess_analogue}. For $a,b \in \ZZ_{>0}$, let $K_{a, b}$ denote the undirected complete bipartite graph with parts of size $a$ and $b$. Let $K_{\overrightarrow{a,b}} = (A \sqcup B, E)$ denote the mixed graph with underlying undirected graph $K_{a, b}$ (with $|A| = a, |B| = b$) such that all edges in  $K_{\overrightarrow{a,b}}$ are directed $A \to B$. By applying the Erd\H os-Stone-Simonovits theorem, we obtain an analogue of the classical observation that undirected graphs free of fixed bipartite subgraph $H$ must have $o(n^2)$ edges; we find that mixed graphs free of some fixed subgraph $F \subseteq K_{\overrightarrow{a,b}}$ can only have $o(n^2)$ directed edges. 

\begin{proposition}\label{prop.ktt}
    For mixed graph $F$, we have $\theta(F) = \infty$ if and only if $F \subseteq K_{\overrightarrow{t,t}}$ for some $t$.
    Otherwise $\theta(F)\leq 2$.
\end{proposition}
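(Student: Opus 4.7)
The plan is to prove the equivalence by handling each direction in turn, then deriving the upper bound $\theta(F) \le 2$ in the second case via an explicit construction.

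For the forward implication, I would argue the contrapositive. Assume $\theta(F) < \infty$; by~\Cref{prop.theta-exists} there exist $\eps > 0$ and an infinite sequence of $F$-free mixed graphs $\{G_n\}$ with $\beta(G_n) \ge \eps$, so $G_n$ has at least $\eps \binom{n}{2}$ directed edges. Construct the bipartite auxiliary graph $H_n$ with vertex parts $V_1, V_2$, each a copy of $V(G_n)$, and place an edge between $u \in V_1$ and $v \in V_2$ whenever $u\widecheck v \in E(G_n)$. Then $H_n$ has $\Omega(n^2)$ edges on $2n$ vertices, so by the K\H{o}v\'ari-S\'os-Tur\'an theorem, for every fixed $t_0$ and $n$ sufficiently large, $H_n$ contains $K_{2t_0, 2t_0}$. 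Translated back, this produces sets $A, B \subseteq V(G_n)$ with $|A| = |B| = 2t_0$ such that $u \widecheck v \in E(G_n)$ for all distinct $u \in A$, $v \in B$.

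The subtlety is that $A$ and $B$ may overlap as subsets of $V(G_n)$, whereas $K_{\overrightarrow{t_0,t_0}}$ requires disjoint parts; to fix this, pick any $B' \subseteq B$ of size $t_0$, then $A' \subseteq A \setminus B'$ of size $t_0$ (possible since $|A \setminus B'| \ge 2t_0 - t_0 = t_0$). The induced mixed subgraph on $A' \sqcup B'$ then contains $K_{\overrightarrow{t_0, t_0}}$. Taking $t_0 = v(F)$, if $F \subseteq K_{\overrightarrow{t_0, t_0}}$ for some $t$, then $F \subseteq K_{\overrightarrow{v(F), v(F)}} \subseteq G_n$, contradicting $F$-freeness. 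Hence $F \not\subseteq K_{\overrightarrow{t,t}}$ for any $t$.

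For the reverse implication, suppose $F \not\subseteq K_{\overrightarrow{t,t}}$ for every $t$, so in particular $F \not\subseteq K_{\overrightarrow{v(F), v(F)}}$. Take $G_n = K_{\overrightarrow{\lfloor n/2\rfloor, \lceil n/2\rceil}}$. Any subset of $v(F)$ vertices of $G_n$ splits as $a$ vertices in one part and $b$ vertices in the other with $a+b = v(F)$, and the induced subgraph is $K_{\overrightarrow{a,b}}$, which embeds into $K_{\overrightarrow{v(F), v(F)}}$. Hence any subgraph of $G_n$ on $v(F)$ vertices lies in $K_{\overrightarrow{v(F), v(F)}}$, and so $G_n$ is $F$-free for $n \ge 2v(F)$. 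Since $\alpha(G_n) = 0$ and $\beta(G_n) \to 1/2$, we obtain $\alpha(G_n) + \rho \beta(G_n) \to \rho/2$, which exceeds $1$ for any $\rho > 2$, forcing $\theta(F) \le 2$ (and in particular $\theta(F) < \infty$).

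The main obstacle is the vertex-disjointness issue in the forward direction, since the K\H{o}v\'ari-S\'os-Tur\'an conclusion for $H_n$ naturally sits in the bipartite double cover whose two sides share the same underlying vertex set when collapsed back to $V(G_n)$; blowing the target up from $K_{t_0, t_0}$ to $K_{2t_0, 2t_0}$ and pruning resolves this cleanly. Everything else is routine application of supersaturation and an explicit Tur\'an-type construction.
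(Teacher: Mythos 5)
Your proof is correct, and it covers both directions of the equivalence as well as the bound $\theta(F)\le 2$. The reverse implication (the explicit $F$-free family $K_{\overrightarrow{\lfloor n/2\rfloor,\lceil n/2\rceil}}$ with $\alpha\to0$, $\beta\to\tfrac12$) is essentially identical to the paper's. Where you genuinely diverge is in the hard direction, showing that a sequence of $F$-free graphs with $\beta(G_n)\ge\eps$ forces $F\not\subseteq K_{\overrightarrow{t,t}}$: the paper first applies Erd\H{o}s--Stone--Simonovits to the \emph{underlying undirected graph} of the directed part to extract a large $K_{n_0,n_0}$ on parts $A\sqcup B$, then pigeonholes on orientation (at least half the $n_0^2$ directed edges go $A\to B$), and applies a second bipartite extremal step to the majority class to land on $K_{\overrightarrow{t,t}}$. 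You instead pass to the bipartite double cover of the directed part (vertex $u\in V_1$ joined to $v\in V_2$ iff $u\widecheck{v}\in E(G_n)$) and apply K\H{o}v\'ari--S\'os--Tur\'an once; this encodes the orientation into the auxiliary graph and eliminates the majority-orientation step, which is a small but real simplification. One remark: your disjointness worry is vacuous --- in the double cover there is never an edge between the two copies of the same vertex (that would require a loop $u\widecheck{u}$), so any $K_{s,s}$ found there already has parts that are disjoint as subsets of $V(G_n)$; your blow-up-to-$2t_0$-and-prune step is harmless but unnecessary. Also, the existence of the sequence with $\beta(G_n)\ge\eps$ follows directly from the definition of $\theta(F)=\infty$ rather than from \Cref{prop.theta-exists}, but that is a citation quibble, not a gap.
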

\begin{proof}
    The main effort is to show that $\theta(F) = \infty$ for $F = K_{\overrightarrow{t,t}}$ for $t \in \ZZ_{>0}$. Fix any $\delta > 0$ and suppose to the contrary there was a sequence $\{G_n\}$ of $F$-free mixed graphs with $v(G_n) \rightarrow \infty$ and $\beta(G_n) > \delta$ for all $n \in \ZZ_{>0}$. 
    Fix $n_0$ such that any undirected graph with at least $2 n_0$ vertices and $\frac{1}{2} n_0^2$ edges contains a copy of $K_{t, t}$ (such $n_0$ exists by the Erd\H{o}s-Stone-Simonovits theorem).
    Note that the subgraph $G_\text{d} \subset G$ comprising only directed edges of $G$ has at least $\delta \binom{n}{2}$ edges. Consequently, by the Erd\H os-Stone-Simonovits theorem, for $n$ sufficiently large, $\widetilde{G}_{\text{d}}$ contains a copy of $K_{n_0, n_0}$ with vertex set $A \sqcup B$. Let $G' \subseteq G_{\text{d}}[A \sqcup B]$ be the bipartite mixed graph comprising the directed edges in $G$ that have tail vertex in $A$ and head vertex in $B$. Without loss of generality, $e(G') \ge \frac{1}{2} n_0^2$. By our choice of $n_0$, $\widetilde{G'}$ contains a copy of $K_{t, t}$. By our choice of edge orientations, this yields a copy of $K_{\overrightarrow{t,t}}$ in $G' \subseteq G$ as desired.
    
    For the other direction, suppose $F\not\subseteq K_{\overrightarrow{t,t}}$ for all $t$. Then, since $\lim_{t\to\infty}\alpha(K_{\overrightarrow{t,t}}) = 0$ and $\lim_{t\to\infty}\beta(K_{\overrightarrow{t,t}}) = \frac12$, we have $\theta(F) \leq 2 < \infty$.
\end{proof}

We can also consider the case where $F$ is a mixed graph with no directed edges; here, we obtain the equivalent to the Erd\H{o}s-Stone-Simonovits theorem by a simple manipulation of definitions.
\begin{obs}\label{prop.undirected}
    Let $F$ be an undirected graph. Then 
    \[\theta(F)=
    \begin{cases}
    \frac{\upchi(F) - 1}{\upchi(F)-2}, & \text{ if } \upchi(F)>2,     \\
    \infty,                            & \text{ if } \upchi(F)\leq 2.
    \end{cases}\]
\end{obs}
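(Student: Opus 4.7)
The plan is to reduce to the classical Erd\H{o}s-Stone-Simonovits theorem via the elementary observation that when $F$ is undirected, a mixed graph $G$ is $F$-free if and only if its underlying undirected graph $\undirect{G}$ is $F$-free. The forward direction is immediate; for the reverse direction, $F$ has no directions, so the subgraph relation of~\Cref{d:mixedgraph} (which permits forgetting directions) reduces $F \subseteq G$ to $F \subseteq \undirect{G}$. Thus ESS applied to $\undirect{G}$ yields
\[
\alpha(G) + \beta(G) \;=\; \frac{e(\undirect{G})}{\binom{v(G)}{2}} \;\leq\; 1 - \frac{1}{\upchi(F) - 1} + o(1)
\]
for every $F$-free mixed graph $G$.

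First, I would dispatch the bipartite case $\upchi(F) \leq 2$: here ESS forces $e(\undirect{G}) = o(n^2)$, so in particular $\beta(G) = o(1)$ along every sequence of $F$-free mixed graphs, giving $\theta(F) = \infty$ directly from the definition.

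For the non-bipartite case, set $k = \upchi(F) - 1 \geq 2$. The lower bound $\theta(F) \geq k/(k-1)$ follows from the estimate
\[
\alpha(G) + \frac{k}{k-1}\beta(G) \;=\; \bigl(\alpha(G) + \beta(G)\bigr) + \frac{1}{k-1}\beta(G) \;\leq\; \frac{k-1}{k} + \frac{1}{k-1}\cdot\frac{k-1}{k} + o(1) \;=\; 1 + o(1),
\]
using $\beta(G) \leq \alpha(G) + \beta(G) \leq (k-1)/k + o(1)$. For the matching upper bound $\theta(F) \leq k/(k-1)$, I would take the balanced Tur\'an graph $T_k(n)$ on $k$ parts and orient every edge arbitrarily to produce a mixed graph $G_n$; then $\undirect{G_n} = T_k(n)$ is $F$-free since $\upchi(T_k(n)) = k < \upchi(F)$, while $\alpha(G_n) = 0$ and $\beta(G_n) = (k-1)/k + o(1)$ certify that no $\rho > k/(k-1)$ is admissible.

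No step is genuinely an obstacle: the entire argument is a direct corollary of ESS once the reduction in the first paragraph is identified. The only subtlety worth flagging is the verification that any orientation of $T_k(n)$ yields an $F$-free mixed graph, which crucially uses the hypothesis that $F$ has no directed edges; without this hypothesis, the orientation could introduce a copy of $F$ even though $\undirect{G_n}$ contains none.
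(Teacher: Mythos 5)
Your proposal is correct and follows essentially the same route as the paper: reduce to the underlying undirected graph via the observation that $G$ is $F$-free iff $\undirect{G}$ is, apply Erd\H{o}s--Stone--Simonovits for the lower bound on $\theta(F)$, and orient the Tur\'an graph $T(n,\upchi(F)-1)$ for the matching construction. The only cosmetic difference is that the paper handles the bipartite case by citing its $K_{\overrightarrow{t,t}}$ proposition rather than invoking the degenerate case of ESS directly, but the substance is the same.
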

\begin{proof}
The case $\upchi(F) \leq 2$ follows from~\Cref{prop.ktt}. Thus, assume $\upchi(F)\geq3$. Mixed graph $G$ is $F$-free if and only if $\undirect{G}$ is $F$-free. Thus, for $F$-free $G$, by the Erd\H os-Stone-Simonovits Theorem
\[\ecundir{G} + \ecdir{G} = e\big(\undirect{G}\big)\leq \ex(n,F) \leq \left(\frac{\upchi(F)-2}{\upchi(F)-1} + o(1)\right)\frac{n^2}2.\]
Rewriting in terms of $\alpha(G)$ and $\beta(G)$ then gives the desired lower bound.

Similar to the undirected case, the extremal $F$-free mixed graphs for undirected $F$ are the family of mixed graphs with all edges directed and underlying undirected graph $T(n, \upchi(F) - 1)$, the $(\upchi(F)-1)$-partite Tur\'an graph on $n$ vertices.
\end{proof}


\section{Proof of \texorpdfstring{\Cref{mixedgraph-general}}{Theorem 1.6}}\label{sec.ess_analogue}
\subsection{Uncollapsible mixed graphs} Recall from~\Cref{def.uncollapsible} that a mixed graph is \textit{uncollapsible} if it has two adjacent head vertices, or two adjacent tail vertices, else \textit{collapsible}; examples of uncollapsible configurations are given in~\Cref{fig.uncollapsible}. 
For collapsible mixed graph $F$, we partition $V(F) = V_0\sqcup V_h\sqcup V_t$, where $V_h$ are vertices that are the head of some edge(s), $V_t$ are vertices that are tails of some edge(s), and $V_0$ are the remaining vertices. Note that $V_h, V_t$ are both independent sets.

Let $\overrightarrow{K_r}[t]$ be the balanced $t$-blowup of the mixed graph $\overrightarrow{K_r}$ obtained by directing a single edge of an otherwise undirected $K_r$. An equivalent rephrasing of the above discussion is that $F$ is collapsible if and only if $F\subseteq \overrightarrow{K_r}[t]$ for some $r,t \in \ZZ_{>0}$. This yields the following operation that motivates our terminology.

\begin{figure}[ht]
\centering
\begin{minipage}{0.5\textwidth}
  \centering
  \includegraphics[width=5cm]{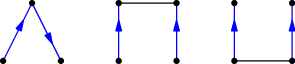}
  \caption{Three examples of uncollapsible graphs}
  \label{fig.uncollapsible}
  \end{minipage}\begin{minipage}{0.5\textwidth}
  \centering
  \includegraphics[width=6cm]{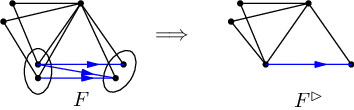}
  \caption{Head-tail collapsing mixed graph $F$}
  \label{fig.ex-collapsion}
  \end{minipage}
\end{figure}

\begin{definition}\label{def.collapsion}
If $F$ is collapsible and has at least one directed edge,
we perform \emph{head-tail collapsing} on $F$ to obtain mixed graph $F^{\rhd}$ by considering partition $V(F) = V_0 \sqcup V_h \sqcup V_t$ and contracting $V_h$ into a single vertex $h$ and $V_t$ into a single vertex $t$ (see~\Cref{fig.ex-collapsion} for an illustration).
If $F$ is a mixed graph  with no directed edges then define $F^\rhd$ to be $F$.
\end{definition}

We first observe that $\theta(F) = 1$ for uncollapsible mixed graphs $F$ by constructing a family of $F$-free mixed graphs with many directed edges.

\begin{definition}\label{def.graph-mxn}
For $n \in \ZZ_{>0}$ and $x \in (0, 1)$, let $M(x,n)$ denote the following mixed graph with $n$ vertices, constructed in three steps:
\begin{enumerate}[label=(\roman*)]
    \item Construct an undirected complete graph $X$ on $\floor{nx}$ vertices;
    \item Construct an independent set $Y$ with $\ceil{n(1-x)}$ vertices;
    \item Draw a directed edge from every vertex in $X$ towards every vertex in $Y$.
\end{enumerate}
This construction is illustrated in \Cref{fig.bxn}.
\end{definition}

\begin{figure}[ht]
\centering
\includegraphics[width=6cm]{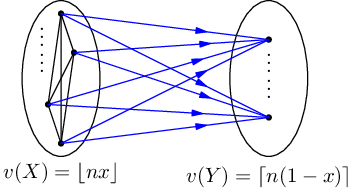}
\caption{$M(x, n)$}
\label{fig.bxn}
\end{figure}

\begin{proposition}\label{prop.mxn-theta}
    Let $F$ be a mixed graph. If $F\not\subseteq M(x,n)$ for all $n \in \ZZ_{>0}, x\in(0,1)$, then $\theta(F) = 1$.
\end{proposition}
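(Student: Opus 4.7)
The plan is to exhibit, for any $\rho > 1$, a sequence of $F$-free mixed graphs $G_n$ with $\alpha(G_n) + \rho \beta(G_n) \to c > 1$, since $\theta(F) \geq 1$ holds automatically and so it suffices to prove $\theta(F) \leq 1$. The hypothesis directly hands us a convenient family of $F$-free graphs: each $M(x,n)$ is $F$-free by assumption, so we simply need to pick $x \in (0,1)$ cleverly depending on $\rho$.

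First I would compute the densities asymptotically. From \Cref{def.graph-mxn}, the undirected edges form a clique on $\lfloor nx \rfloor$ vertices and the directed edges form a complete bipartite pattern between $\lfloor nx \rfloor$ and $\lceil n(1-x) \rceil$ vertices, so as $n \to \infty$,
\[\alpha(M(x,n)) = \frac{\binom{\lfloor nx\rfloor}{2}}{\binom{n}{2}} \longrightarrow x^2, \qquad \beta(M(x,n)) = \frac{\lfloor nx\rfloor\lceil n(1-x)\rceil}{\binom{n}{2}} \longrightarrow 2x(1-x).\]
Thus for fixed $\rho > 1$, setting $g_\rho(x) = x^2 + 2\rho\, x(1-x)$, we have $\alpha(M(x,n)) + \rho\, \beta(M(x,n)) \to g_\rho(x)$.

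Next I would optimize $g_\rho$ over $x \in (0,1)$. Since $g_\rho'(x) = 2x(1 - 2\rho) + 2\rho$ vanishes at $x^* = \rho/(2\rho - 1)$, which lies in $(0,1)$ precisely because $\rho > 1$ forces $2\rho - 1 > \rho > 0$, the maximum value is
\[g_\rho(x^*) = \frac{\rho^2}{2\rho - 1} = 1 + \frac{(\rho - 1)^2}{2\rho - 1} > 1.\]
Therefore for every $\rho > 1$ the $F$-free graphs $M(x^*, n)$ satisfy $\alpha(M(x^*, n)) + \rho\, \beta(M(x^*, n)) > 1 + \eps$ for some $\eps = \eps(\rho) > 0$ and all sufficiently large $n$, which shows $\theta(F) \leq 1$ and hence $\theta(F) = 1$.

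There is no real obstacle here beyond the elementary calculus; the only point that requires any care is verifying that the critical point $x^* = \rho/(2\rho-1)$ genuinely lies in the open interval $(0,1)$ (which is exactly where the hypothesis $\rho > 1$ enters) so that $M(x^*, n)$ is a valid, $F$-free witness.
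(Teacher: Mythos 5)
Your proof is correct and takes essentially the same approach as the paper: both use the family $M(x,n)$ with the same asymptotic density computation $\alpha \to x^2$, $\beta \to 2x(1-x)$, and then choose $x$ as a function of $\rho$ to force $\alpha + \rho\beta > 1$. The only cosmetic difference is that you optimize $x$ exactly (finding $x^* = \rho/(2\rho-1)$), whereas the paper simply takes any $x$ sufficiently close to $1$ to reach the same contradiction.
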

\begin{proof}
Note that $\theta(F) \ge 1$ for all mixed graphs $F$. Suppose to the contrary that there was $\eps > 0$ such that $\theta(F) \ge 1 + \eps$.
Consider $M(x, n)$ for fixed $x$ as $n$ grows large. Observe that 
     \begin{align*}
        \lim_{n\to\infty} \alpha\big(M(x,n)\big) & = \lim_{n\to\infty} \tbinom{\floor{nx}}2/\tbinom n2 = x^2;                 \\
        \lim_{n\to\infty} \beta\big(M(x,n)\big)  & = \lim_{n\to\infty} {\floor{nx}\cdot\ceil{n(1-x)}}/{\tbinom n2} = 2x(1-x).
    \end{align*}
    Since $M(x,n)$ is $F$-free $\alpha(M(x, n)) + \theta(F) \beta(M(x, n)) \le 1 + o(1)$ for all $n \in \ZZ_{>0}, x \in (0, 1),$ and thus
    \[\theta(F) \leq \lim_{n\to\infty}\frac{1-\alpha(M(x,n))}{\beta(M(x,n))} +o(1) = \frac{1-x^2}{2x(1-x)} = \frac{1+x}{2x}.\]
    When $x > 1/(1 + \eps)$, the above implies that $\theta(F) \le 1 + \eps/2$, a contradiction.
\end{proof}

\begin{corollary}\label{prop.uncollapsible}
Let $F$ be an uncollapsible mixed graph. Then $\theta(F)=1$.
\end{corollary}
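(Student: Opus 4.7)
The plan is to reduce the statement to Proposition~\ref{prop.mxn-theta} after splitting on which witness to uncollapsibility $F$ admits; since $\theta(F)\geq 1$ holds for every mixed graph $F$, it suffices to show $\theta(F)\leq 1$.

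\emph{Case 1: $F$ has two adjacent head vertices $u$ and $v$.} I would show that $F\not\subseteq M(x,n)$ for every $x\in(0,1)$ and $n\in\ZZ_{>0}$, after which Proposition~\ref{prop.mxn-theta} immediately gives $\theta(F)=1$. For any putative embedding of $F$ into $M(x,n)$, the vertices $u$ and $v$ must map into the set of head vertices of $M(x,n)$; these all lie in the independent set $Y$, because a directed edge of $F$ can only come from a directed edge of $M(x,n)$ (the subgraph relation allows deleting edges and forgetting directions, but not recovering them). Yet $u$ and $v$ are adjacent in $F$, so their images must be joined by an edge of $M(x,n)$, contradicting the independence of $Y$.

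\emph{Case 2: $F$ has two adjacent tail vertices.} One cannot simply repeat the Case~1 argument: the complete part $X$ of $M(x,n)$ contains many adjacent pairs of tail vertices, and an uncollapsible $F$ can genuinely embed into $M(x,n)$ when only the tail-adjacency condition holds (e.g.\ $F=\{a\widecheck c,\, b\widecheck d,\, ab\}$). I would instead invoke a reversal symmetry. Let $F^{\mathrm{op}}$ denote the mixed graph obtained from $F$ by reversing every directed edge (and leaving undirected edges alone); then $F^{\mathrm{op}}$ has two adjacent head vertices, so Case~1 gives $\theta(F^{\mathrm{op}})=1$. A direct check shows $G\mapsto G^{\mathrm{op}}$ is a bijection on $n$-vertex mixed graphs that preserves both $\alpha$ and $\beta$, and $F\subseteq G$ if and only if $F^{\mathrm{op}}\subseteq G^{\mathrm{op}}$, because vertex-deletion, edge-deletion, and direction-forgetting all commute with reversing all directed edges. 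This bijects the extremal sequences for $F$ with those for $F^{\mathrm{op}}$ at matching densities, giving $\theta(F)=\theta(F^{\mathrm{op}})=1$.

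The main obstacle is recognizing that Proposition~\ref{prop.mxn-theta} does not apply uniformly to all uncollapsible $F$: the adjacent-tails case requires either the reversal symmetry above, or equivalently the introduction of a companion graph $M^*(x,n)$, defined as $M(x,n)$ with every directed edge reversed. Since $M^*(x,n)$ has the same $\alpha$ and $\beta$ as $M(x,n)$ while swapping the roles of heads and tails, the proof of Proposition~\ref{prop.mxn-theta} goes through verbatim with $M^*$ in place of $M$, providing a slightly more hands-on alternative to the symmetry argument.
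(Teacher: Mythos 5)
Your proof is correct and follows the paper's argument essentially verbatim: the adjacent-heads case is handled by showing $F\not\subseteq M(x,n)$ and invoking \Cref{prop.mxn-theta}, and the adjacent-tails case by rerunning the argument with the reversed graph $M^*(x,n)$, which is exactly the paper's $M'(x,n)$. The reversal-symmetry formulation $\theta(F)=\theta(F^{\mathrm{op}})$ is a clean equivalent packaging of the same idea.
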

\begin{proof}
Note that $M(x, n)$ does not have any adjacent head vertices, and thus if two head vertices in $F$ are adjacent, then $F\not\subseteq M(x,n)$ for all $x, n$, so $\theta(F)=1$ by~\Cref{prop.mxn-theta}. If tail vertices in $F$ are adjacent, the result follows analogously to above, by repeating the argument of~\Cref{prop.mxn-theta} with $M'(x, n)$, the mixed graph obtained by reversing the directions of all directed edges in $M(x,n)$.
\end{proof}

\subsection{Mixed graphs with one directed edge}\label{subsec.one-dir}
Our goal in this section is to determine the Tur\'an density coefficient for mixed graphs $F$ with exactly one directed edge. It turns out this quantity will be $\theta(F) = 1+1/(\upchi(F)-2)$. We will reduce this case to $F = \overrightarrow{K_r}$, an $r$-clique with exactly one directed edge, then compute $\theta(\overrightarrow{K_r})$ by a further reduction to the undirected setting. Let $t(n, r)$ denote the number of edges in the \textit{Tur\'an graph} $T(n, r)$, the densest $n$-vertex $K_{r+1}$-free undirected graph.

For vertex $v\in V(G)$ define its \emph{undirected degree} $\degu v$ as the number of vertices connected to $v$ by undirected edges, and its \emph{directed degree} $\degd v$ as the number of vertices connected to $v$ by directed edges.

\begin{proposition}\label{prop.complete-one-edge-turan}
    Let $n\geq r\geq 2$ with $n, r \in \ZZ_{>0}$. For all $n$-vertex $\overrightarrow{K_{r+1}}$-free mixed graphs $G$, we have $\alpha(G)+\frac{\binom n2}{t(n,r)}\beta(G)\leq 1$.
\end{proposition}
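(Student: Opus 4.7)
The plan is to reduce this tight bound to a purely undirected statement, then close the argument via Tur\'an's theorem together with a structural analysis of $K_{r+1}$-cliques.

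First, I would partition the edges of $\undirect{G}$ into two sets: let $U$ consist of edges of $\undirect{G}$ contained in some $K_{r+1}$-clique of $\undirect{G}$, and $V$ consist of edges contained in no such clique. The $\overrightarrow{K_{r+1}}$-freeness of $G$ forces every $U$-edge to be undirected in $G$: a $K_{r+1}$ in $\undirect{G}$ containing even one directed edge of $G$ would, upon forgetting directions on the other edges, yield a copy of $\overrightarrow{K_{r+1}}$ in $G$. Consequently, $\ecundir{G} \geq |U|$ and every directed edge of $G$ lies in $V$. Writing $c = \binom{n}{2}/t(n,r)$ and letting $e_u^V, e_d^V$ denote the undirected and directed edges of $G$ inside $V$:
\[
\ecundir{G} + c\,\ecdir{G} = |U| + e_u^V + c\, e_d^V = |U| + |V| + (c-1)\, e_d^V \leq |U| + c|V|,
\]
using $c \geq 1$ and $e_d^V \leq |V|$. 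So it suffices to prove the purely undirected inequality $|U| + c|V| \leq \binom{n}{2}$, equivalently $|U|/\binom{n}{2} + |V|/t(n,r) \leq 1$.

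The subgraph of $\undirect{G}$ supported on $V$ is $K_{r+1}$-free, since any $K_{r+1}$ inside it would also be a $K_{r+1}$ of $\undirect{G}$, forcing its edges into $U$ rather than $V$; so Tur\'an's theorem gives $|V| \leq t(n,r)$. When $|U| = 0$ the bound reduces to $c|V| \leq c\,t(n,r) = \binom{n}{2}$, and when $|V| = 0$ it reduces to $|U| \leq \binom{n}{2}$; both are immediate. The essential content lies in the mixed regime where both sets are nonempty. To handle it, I would apply a Zykov-style symmetrization to $\undirect{G}$: for any two non-adjacent $u, v$, replace the one of smaller weighted degree (where $U$-incidences receive weight $1$ and $V$-incidences receive weight $c$) by a copy of the other's incidences. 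Iterating yields a complete multipartite underlying graph with parts $V_1, \ldots, V_m$ of sizes $n_1, \ldots, n_m$. In this reduced setting, $|U|$ and $|V|$ become explicit functions of the $n_i$: if $m \leq r$ then there are no $K_{r+1}$-cliques and $|U| = 0$, $|V| = \sum_{i < j} n_i n_j \leq t(n,r)$ by Tur\'an; if $m \geq r+1$ then every cross-part edge sits in some $K_{r+1}$, so $|V| = 0$ and $|U| = \sum_{i<j} n_i n_j \leq \binom{n}{2}$.

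The main obstacle is verifying that the symmetrization does not decrease $|U| + c|V|$, since the move reshapes the $K_{r+1}$-structure of the graph and can reclassify edges between $U$ and $V$. I expect this is resolved by a careful accounting of how $K_{r+1}$-cliques through the modified vertex are created or destroyed and by monotonicity of the weighted degree. Alternatively, one can bypass symmetrization entirely by treating $\max\{|U| + c|V|\}$ as a quadratic optimization over edge-weighted multipartite configurations and arguing via Lagrange multipliers / a Motzkin--Straus-type analysis that the only critical points attaining the bound $\binom{n}{2}$ are the two extremal configurations $K_n$ (with all edges in $U$) and $T(n,r)$ (with all edges in $V$). Either route provides the required sharpening over the trivial bound $|U| + c|V| \leq \binom{n}{2} + (c-1)\,t(n,r)$.
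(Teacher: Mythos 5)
Your reduction is sound and takes a genuinely different route from the paper. The opening steps are correct: $\overrightarrow{K_{r+1}}$-freeness does force every edge of $\undirect{G}$ lying in an $(r+1)$-clique to be undirected in $G$ (forgetting directions is permitted when passing to subgraphs), the chain $\ecundir{G}+c\,\ecdir{G}\le \abs{U}+c\abs{V}$ is valid since $c\ge1$, and the endgame on a complete multipartite graph is right ($m\le r$ gives $\abs{U}=0$ and $\abs{V}\le t(n,r)$; $m\ge r+1$ gives $\abs{V}=0$). The paper proceeds quite differently: it applies Zykov symmetrization to the mixed graph $G$ itself with the weighted count $\ecundir{\cdot}+c\,\ecdir{\cdot}$, extracts a maximum independent set $A$, partitions $V(G)=A\sqcup B\sqcup C$ according to whether vertices attach to $A$ by directed or undirected edges, and inducts on $r$. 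Your version isolates a clean, purely undirected statement --- $\abs{U}/\binom n2+\abs{V}/t(n,r)\le1$ for the clique/non-clique edge partition of any graph --- at the cost of having to symmetrize an objective whose edge weights are not locally determined.

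That cost is exactly where your proof currently has a gap: you name the monotonicity of $\abs{U}+c\abs{V}$ under symmetrization as ``the main obstacle'' and then only assert that you expect it to work, and the Motzkin--Straus alternative is even less developed. The step does go through, but it needs an argument; here is the accounting that closes it. Replace $u$ by a non-adjacent twin $v'$ of $v$, where $v$ has the larger weighted degree in the old graph. First, an edge $xy$ not incident to $v'$ cannot move from $V$ to $U$: any $(r+1)$-clique of the new graph through $v'$ omits $v$ (they are non-adjacent twins), so substituting $v$ for $v'$ produces an $(r+1)$-clique of the old graph containing $xy$; hence such edges can only move from $U$ to $V$, which weakly increases their weight because $c>1$. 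Second, for each $x\in N(v)$ the new edge $v'x$ lies in $U$ in the new graph if and only if $vx$ lay in $U$ in the old graph, and the classification of $vx$ itself is unchanged --- both because no $(r+1)$-clique containing $v$ contains $u$ (they are non-adjacent), so cliques through $vx$ live in the part of the graph untouched by the swap, and cliques through $v'x$ correspond bijectively to them. Thus the edges gained at $v'$ contribute exactly the old weighted degree of $v$, which is at least the weighted degree lost at $u$, and $\abs{U'}+c\abs{V'}\ge\abs{U}+c\abs{V}$. With this lemma supplied (plus the usual care that iterated symmetrization terminates in a complete multipartite graph), your proof is complete.
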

\begin{proof}
Let mixed graph $G$ maximize $\ecundir{G} + \frac{\tbinom n2}{t(n,r)}\ecdir{G}$ over all $n$-vertex $\overrightarrow{K_{r+1}}$-free graphs. We apply Zykov symmetrization~\cite{zykov_symm} to show that if vertices $a,b$ are not adjacent in $F$, we may assume $a$ and $b$ have identical neighborhoods (i.e., for all vertices $v$ either $va, vb$ are both nonedges or are identically oriented with respect to $v$).
    \begin{claim}[Zykov Symmetrization]\label{claim.zykovsymm}
        Suppose vertices $a,b$ are not adjacent in $G$. Then there exists mixed graph $G'$ with $V(G') = V(G)$ such that $G'$ is $\overrightarrow{K_{r+1}}$-free, $a,b$ have the same neighborhood in $G'$, and $$\ecundir{G'} + \frac{\tbinom n2}{t(n,r)}\ecdir{G'} \ge \ecundir{G} + \frac{\binom n2}{t(n,r)}\ecdir{G}.$$
    \end{claim}
    \begin{proof}
    Without loss of generality, assume $\degu(a) + \frac{\binom n2}{t(n,r)} \degd(a)
    \leq \degu(b) + \frac{\binom n2}{t(n,r)} \degd(b).$
    Delete $a$ and replace it with a perfect copy $b'$ of $b$.
    This weakly increases $\ecundir{G} + \frac{\binom n2}{t(n,r)}\ecdir{G}$,
    and it cannot introduce a copy of $\overrightarrow{K_{r+1}}$, since $G$ was $\overrightarrow{K_{r+1}}$-free, and $b, b'$ are not adjacent.
    \end{proof}
    By \Cref{claim.zykovsymm} we can assume that every pair of non-adjacent vertices $a,b$ have equal degree, else by symmetrization we may increase the edge count of the mixed graph by deleting the one with smaller degree and replacing it with a copy of the other.

    We may now iterate the process in \Cref{claim.zykovsymm} as follows: label the vertices $v_1, \dots, v_n$ in some fixed order. We then iterate over all pairs $(v_i, v_j)$ for $1 \leq i < j \leq n$ over all $i$ in an outer loop ($1 \leq i < n$) and over all $j$ in an inner loop ($i+1 \leq j \leq n$), at each step replacing $v_j$ with a copy of $v_i$ if they are not connected. This does not change the edge count of the mixed graph because all nonadjacent vertices have identical degree; furthermore, at any iteration $(i,j)$ it is true that any nonadjacent vertices $v_{i'}, v_{j'}$ with $(i',j')$ lexicographically less than or equal to $(i,j)$ are copies of each other, which means by the end of the process every pair of non-adjacent $a,b$ has identical neighborhood.

    Let $A$ be a maximum independent set of $G$ ($|A| \ge 1$); every $v\in V(G)\setminus A$ is connected to at least one vertex in $A$; thus by \Cref{claim.zykovsymm}, each $v\in V(G)\setminus A$ is connected to all vertices in $A$, with $va$ identically oriented for all $a\in A$.
    Partition $V(G) = A\sqcup B\sqcup C$, where $B$ is the set of vertices connected to $A$ by directed edges
    and $C$ is the set of vertices connected to $A$ by undirected edges (see \Cref{fig.ex.zykov}). Every pair $u \in B$ and $v \in C$ must be adjacent since $u$ and $v$ do not have identical neighborhoods. 
    \begin{figure}[ht]
        \centering
        \includegraphics[width=3.5cm]{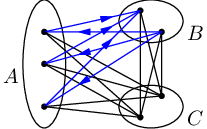}
        \caption{Partitioning the vertices of $G$}
        \label{fig.ex.zykov}
    \end{figure}
    
        We are now ready to prove the desired claim; it suffices to show that for any $G$ as above, $\ecundir{G} + \frac{\tbinom n2}{t(n,r)}\ecdir{G} \le \binom{n}{2}.$
        We show this inequality by induction on $r$. We first consider the base case $r=2$; note that $B$ and $C$ cannot both have vertices, else a $\overrightarrow{K_3}$ is formed by choosing one vertex from each of $A,B,C$. Suppose $B$ is empty. There are no directed edges between vertices of $C$ (if $c_1\widecheck c_2$ is such an edge then $\{a,c_1,c_2\}$ for $a\in A$ forms a $\overrightarrow{K_3}$), so $G$ has no directed edges; the inequality then follows since $\ecundir G \leq \tbinom n2$.
    Next suppose $C$ is empty. Similar to above, there cannot be edges between vertices of $B$. Thus, $G$ is bipartite, so by Tur\'an's theorem~\cite{turan_thm} $e(G) \leq t(n,2)$ and the result follows.

    The inductive step is similar: assume that the above inequality is true for $\overrightarrow{K_r}$-free graphs; we will show the inequality continues to hold for $\overrightarrow{K_{r+1}}$-free graphs. We proceed in two cases:
    \begin{itemize}    
    \item[$|B| = 0$:] Note that $C$ is $\overrightarrow{K_r}$-free (if $\overrightarrow{K_r} \subseteq G[C]$ then adding any $a\in A$ forms a $\overrightarrow{K_{r+1}}$), so
    \[
    \ecundir{G[C]} + \frac{\tbinom n2}{t(n,r)} \ecdir{G[C]}
    \leq \ecundir{G[C]} + \frac{\tbinom n2}{t(n,r-1)} \ecdir{G[C]} \leq \binom{|C|}2,
    \]
    where the second inequality follows from the inductive hypothesis. Thus, as desired
    \[\ecundir{G} + \frac{\binom n2}{t(n,r)}\ecdir{G}
    \leq \abs{A}\cdot\abs{C} + \binom{\abs{C}}2 \le \binom n2.\]
    
        \item[$|B| > 0$:] We claim $K_{r+1} \not\subseteq G$. Suppose to the contrary we have $K_{r+1} \subseteq G$ with on vertex set $S \subset V(G).$ If $S$ does not contain a vertex in $A$, then add one arbitrarily; do the same for $B$. Note that the new set $S'$ has size at least $r+1$ and $\widetilde{G[S']}$ is a clique where for each $u \in S' \cap A, v \in S' \cap B$ we have a directed edge between $u$ and $v$ in $E(G[S'])$. This implies $\overrightarrow{K_{r+1}} \subseteq G[S']$, contradiction.
    Thus, by Tur\'an's theorem
    \[\ecundir{G} + \frac{\binom n2}{t(n,r)}\ecdir{G} \leq \frac{\binom n2}{t(n,r)}\left(\ecundir{G}+\ecdir{G}\right) \leq \binom n2.\]
    This completes the induction and the proof.
    \end{itemize}
\end{proof}

\begin{corollary}\label{cor.complete-one-edge-theta}
    For any integer $r\geq3$, we have $\theta(\overrightarrow{K_r}) = 1+\frac1{r-2}$.
\end{corollary}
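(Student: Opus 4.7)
The plan is to obtain the corollary by combining the upper bound inequality of \Cref{prop.complete-one-edge-turan} (applied with $r$ replaced by $r-1$) for the lower bound on $\theta$, with an explicit extremal construction for the upper bound on $\theta$.

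For the lower bound $\theta(\overrightarrow{K_r}) \ge 1 + \frac{1}{r-2}$, I would substitute $r-1$ for $r$ in \Cref{prop.complete-one-edge-turan}, which is legal because $r \ge 3$ forces $r-1 \ge 2$. This gives that every $n$-vertex $\overrightarrow{K_r}$-free mixed graph $G$ satisfies
\[
\alpha(G) + \frac{\binom{n}{2}}{t(n, r-1)} \beta(G) \le 1.
\]
A routine computation with the Tur\'an graph (taking $n$ divisible by $r-1$ and computing the sizes of the parts) shows that $t(n, r-1) = \frac{r-2}{2(r-1)} n^2$ in the divisible case, and in general $t(n, r-1)/\binom{n}{2} \to \frac{r-2}{r-1}$ as $n \to \infty$, so $\binom{n}{2}/t(n, r-1) \to 1 + \frac{1}{r-2}$. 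Rewriting the inequality as
\[
\alpha(G) + \left(1 + \tfrac{1}{r-2}\right) \beta(G) \le 1 + \left(1 + \tfrac{1}{r-2} - \tfrac{\binom{n}{2}}{t(n, r-1)}\right) \beta(G) = 1 + o(1),
\]
yields the desired lower bound on $\theta(\overrightarrow{K_r})$.

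For the matching upper bound $\theta(\overrightarrow{K_r}) \le 1 + \frac{1}{r-2}$, I would exhibit an explicit $\overrightarrow{K_r}$-free family. Let $G_n$ be the mixed graph whose underlying graph is the Tur\'an graph $T(n, r-1)$ with all edges directed (arbitrarily). Since $\undirect{G_n} = T(n, r-1)$ is $K_r$-free and any $\overrightarrow{K_r} \subseteq G_n$ would require $K_r \subseteq \undirect{G_n}$, the graph $G_n$ is indeed $\overrightarrow{K_r}$-free. We have $\alpha(G_n) = 0$ and $\beta(G_n) = t(n, r-1)/\binom{n}{2} \to \frac{r-2}{r-1}$. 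Hence for any $\rho$ such that $\alpha(G_n) + \rho\beta(G_n) \le 1 + o(1)$ holds along this family, we must have $\rho \cdot \frac{r-2}{r-1} \le 1$, i.e., $\rho \le 1 + \frac{1}{r-2}$.

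Combining both inequalities gives $\theta(\overrightarrow{K_r}) = 1 + \frac{1}{r-2}$, as desired. There is no real obstacle here: the substantive work has already been carried out in \Cref{prop.complete-one-edge-turan} via Zykov symmetrization, and the only subtlety is verifying the direction of the limit $\binom{n}{2}/t(n, r-1) \to (r-1)/(r-2)$ when passing from the finite inequality to the asymptotic statement.
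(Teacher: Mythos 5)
Your proposal is correct and follows essentially the same route as the paper: the lower bound by passing to the limit in \Cref{prop.complete-one-edge-turan} (with $r$ replaced by $r-1$, valid since $r\ge 3$) using $t(n,r-1)=(\tfrac{r-2}{r-1}+o(1))\binom n2$, and the upper bound via the fully directed Tur\'an graph $T(n,r-1)$. No gaps.
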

\begin{proof}
    The lower bound is obtained by taking the limit in \Cref{prop.complete-one-edge-turan} as $n$ grows large, since $t(n,r-1) = (\tfrac{r-2}{r-1}+o(1))\binom n2$.
    The upper bound on $\theta(\overrightarrow{K_r})$ is obtained by construction: let $G$ be any mixed graph with $\undirect{G} = T(n,r-1)$ and all edges directed arbitrarily. Such $G$ is $F$-free with $\alpha(G) = 0, \beta(G) = \frac{r-2}{r-1} - o(1)$.
    This implies $\theta\left(\overrightarrow{K_r}\right)\leq1+\frac1{r-2},$ the desired bound. 
\end{proof}

We can now handle the case of any mixed graph with a single directed edge.

\begin{lemma}\label{lem:onedirectededge}
If mixed graph $F$ has one directed edge with $\upchi(F) \ge 3$, then $\theta(F) = 1 + 1/(\upchi(F) -2).$
\end{lemma}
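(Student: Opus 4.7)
The plan is to obtain matching upper and lower bounds on $\theta(F)$, one of which follows from the construction we have already used for $\overrightarrow{K_r}$ and the other from a subgraph containment relation that lets us invoke~\Cref{cor.complete-one-edge-theta} together with the blowup result~\Cref{prop.mixedgraph-blowup}.

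First I would establish that $\theta(F) \le 1 + 1/(\upchi(F) - 2)$. Set $k = \upchi(F)$. I would exhibit an explicit family of $F$-free mixed graphs that saturates this ratio: let $G_n$ be any mixed graph with $\undirect{G_n} = T(n, k-1)$, all of whose edges are directed (orientations chosen arbitrarily). Since $\upchi(\undirect{G_n}) = k - 1 < k = \upchi(F)$, we cannot have $\undirect{F} \subseteq \undirect{G_n}$, hence $F \not\subseteq G_n$. We compute $\alpha(G_n) = 0$ and $\beta(G_n) = \frac{k-2}{k-1} - o(1)$, so the constraint $\alpha(G_n) + \theta(F)\beta(G_n) \le 1 + o(1)$ forces $\theta(F) \le \frac{k-1}{k-2} = 1 + \frac{1}{k-2}$.

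Next I would prove the matching lower bound $\theta(F) \ge 1 + 1/(k-2)$ by showing that $F$ is a subgraph of a blowup of $\overrightarrow{K_k}$. Concretely, since $\undirect{F}$ has chromatic number $k$, for $t$ sufficiently large (say $t \ge v(F)$) we have $\undirect{F} \subseteq K_k[t]$; fix a proper $k$-coloring $c$ of $\undirect{F}$. The endpoints $u, v$ of the unique directed edge of $F$ are adjacent in $\undirect{F}$, so $c(u) \ne c(v)$; after relabeling the color classes we may assume that $c(u)$ and $c(v)$ are exactly the two classes joined by the directed edges in $\overrightarrow{K_k}[t]$. This produces an embedding $F \hookrightarrow \overrightarrow{K_k}[t]$ respecting edge orientation. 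The subgraph monotonicity of $\theta$ (recorded in~\Cref{subsec.fundamentals}), together with~\Cref{prop.mixedgraph-blowup} and~\Cref{cor.complete-one-edge-theta}, then gives
\[
\theta(F) \;\ge\; \theta\!\left(\overrightarrow{K_k}[t]\right) \;=\; \theta\!\left(\overrightarrow{K_k}\right) \;=\; 1 + \frac{1}{k-2}.
\]

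Combining the two bounds yields $\theta(F) = 1 + 1/(\upchi(F) - 2)$, as desired. The only subtlety is the orientation-compatibility step in the embedding $F \hookrightarrow \overrightarrow{K_k}[t]$ — one has to notice that any proper $k$-coloring already places the endpoints of the directed edge in distinct classes, so a suitable relabeling of colors makes the single directed edge of $F$ land in the unique oriented block of $\overrightarrow{K_k}[t]$. Once this is observed the proof is essentially immediate, and no further case analysis on the structure of the undirected part of $F$ is needed.
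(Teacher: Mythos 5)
Your proposal is correct and follows essentially the same route as the paper: the lower bound via $F \subseteq \overrightarrow{K_{\upchi(F)}}[t]$ combined with \Cref{prop.mixedgraph-blowup} and \Cref{cor.complete-one-edge-theta}, and the upper bound via the fully-directed Tur\'an graph construction (which the paper packages as $\theta(F)\le\theta(\undirect{F})$ using \Cref{prop.undirected}, whose proof is exactly your explicit construction). Your added remark about relabeling color classes so the directed edge lands in the oriented block is a correct and slightly more careful account of the embedding step that the paper leaves implicit.
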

\begin{proof}
 Let $r = \upchi(F)$ and consider a proper $r$-coloring of $F$ with associated vertex partition $V(F)=V_1\sqcup V_2\sqcup \dots\sqcup V_r$. Without loss of generality, we assume the directed edge of $F$ is from a vertex in $V_1$ to a vertex in $V_2$ (see \Cref{fig.onedir}).
    
  \begin{figure}[ht]
  \centering
  \begin{minipage}[b]{.45\textwidth}
    \centering
    \includegraphics[width=6cm]{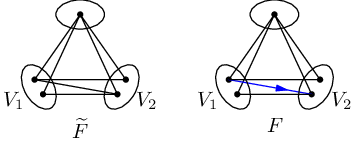}
    \caption{Vertex-coloring of $F$ with one directed edge}
    \label{fig.onedir}
    \end{minipage}\begin{minipage}{.03\textwidth}\;
    \end{minipage}\begin{minipage}[b]{.47\textwidth}
        \centering
    \includegraphics[width=4.5cm]{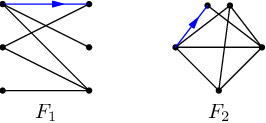}
        \caption{Example mixed graphs with one directed edge}
        \label{fig.onedir-examples}
    \end{minipage}
    \end{figure}
    Applying~\Cref{prop.undirected} we have 
    upper bound $\theta(F) \leq \theta\left(\undirect{F}\right) = 1+\frac1{r-2}$.
    On the other hand, by~\Cref{cor.complete-one-edge-theta}, $\theta\left(\overrightarrow{K_r}\right)= 1+\frac1{r-2}$. 
    Since $F \subseteq \overrightarrow{K_r}[t]$ for some $t$, by \Cref{prop.mixedgraph-blowup},
    \[
    1+\frac1{r-2}
    = \theta\left(\overrightarrow{K_r}\right)
    = \theta\left(\overrightarrow{K_r}[t]\right)
    \leq \theta(F).
    \]
    Combining these gives the desired result.
\end{proof}
\begin{example}\label{ex.onedir-examples}
    In~\Cref{fig.onedir-examples}, the pictured mixed graphs satisfy $\theta(F_1) =\infty$ and $\theta(F_2) = \frac32$ by \Cref{prop.ktt,lem:onedirectededge} respectively.
\end{example} 

\subsection{General mixed graphs}\label{subsec.general}

We now consider collapsible mixed graphs.

\begin{lemma}\label{lem.chi-collapsion-ineq}
Let $F$ be a collapsible mixed graph with at least one directed edge where $\upchi(F^{\rhd}) > 2$. Then,
\[1+\frac1{\upchi(F)} \le 1 + \frac{1}{\undirectedchi{F^{\rhd}} - 2} \leq \theta(F) \leq 1+\frac1{\undirectedchi{F}-2}.\]
\end{lemma}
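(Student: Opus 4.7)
The claimed chain decomposes into three inequalities, and I plan to address them separately, in roughly increasing order of difficulty. The leftmost inequality $1 + 1/\upchi(F) \le 1 + 1/(\upchi(F^{\rhd}) - 2)$ is equivalent to the chromatic bound $\upchi(F^{\rhd}) \le \upchi(F) + 2$; I would establish this by extending any proper $\upchi(F)$-coloring of $F$ to a proper coloring of $F^{\rhd}$ by retaining the colors on $V_0$ and assigning two fresh colors, one each, to the contracted vertices $h$ and $t$. Collapsibility ensures that $V_h$ and $V_t$ are independent in $F$, so no conflict arises on edges of $F^{\rhd}$ incident to $h$ or $t$. For the rightmost inequality $\theta(F) \le 1 + 1/(\upchi(F) - 2)$, the plan is simply to note $\undirect{F} \subseteq F$ and invoke the monotonicity of $\theta$ under taking subgraphs, reducing the claim to $\theta(\undirect{F}) = 1 + 1/(\upchi(F) - 2)$ from~\Cref{prop.undirected}.

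The main work lies in the middle inequality $\theta(F) \ge 1 + 1/(\upchi(F^{\rhd}) - 2)$. Setting $r = \upchi(F^{\rhd}) > 2$, my plan is to exhibit a subgraph embedding $F \subseteq \overrightarrow{K_r}[s]$ for some $s \in \ZZ_{>0}$, from which
\[
\theta(F) \ge \theta\!\left(\overrightarrow{K_r}[s]\right) = \theta\!\left(\overrightarrow{K_r}\right) = 1 + \frac{1}{r-2}
\]
follows by monotonicity, blowup invariance (\Cref{prop.mixedgraph-blowup}), and the clique calculation (\Cref{cor.complete-one-edge-theta}). To construct the embedding, I would start from a proper $r$-coloring of $F^{\rhd}$. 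Because $F$ contains at least one directed edge and every directed edge of $F$ collapses to a single edge between $h$ and $t$, the vertices $h$ and $t$ are adjacent in $F^{\rhd}$ and thus receive distinct colors $c_h \ne c_t$. After identifying the color class of $t$ with the ``tail'' part $P_1$ of $\overrightarrow{K_r}[s]$ and the color class of $h$ with the ``head'' part $P_2$ (and the remaining $r-2$ classes with the remaining parts arbitrarily), I would inject $V_h$ into $P_2$, $V_t$ into $P_1$, and each $v \in V_0$ into the part labeled by its color in $F^{\rhd}$; for $s$ at least the maximum color class size, this map is injective.

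The main technical obstacle is to verify that every edge of $F$ is carried to a correctly oriented edge of $\overrightarrow{K_r}[s]$. This reduces to a structural fact, to be proved by a short case analysis of the collapsibility hypothesis, asserting that in $F$ no vertex is simultaneously a head and a tail (so the partition $V_0 \sqcup V_h \sqcup V_t$ is well-defined), and moreover every directed edge of $F$ has its tail in $V_t$ and head in $V_h$ while every remaining edge is undirected. Granted this, undirected edges of $F$ land between vertices of distinct color classes and so on some edge of the blowup of either type (permissible since one is allowed to forget directions), while directed edges of $F$ land on directed edges of $\overrightarrow{K_r}[s]$ oriented from $P_1$ to $P_2$, as required.
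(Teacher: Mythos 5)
Your proposal is correct and follows essentially the same route as the paper: the outer inequalities are handled identically (the coloring bound $\upchi(F^{\rhd})\le\upchi(F)+2$ and monotonicity via $\undirect{F}\subseteq F$ with \Cref{prop.undirected}), and for the middle inequality you rely on the same ingredients, namely blowup invariance and the computation of $\theta(\overrightarrow{K_r})$. The only cosmetic difference is that you embed $F$ directly into $\overrightarrow{K_{\upchi(F^{\rhd})}}[s]$, which just inlines the paper's two-step factorization $F\subseteq F^{\rhd}[t]$ followed by the embedding from \Cref{lem:onedirectededge}.
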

\begin{proof}
    By definition, $F$ is a subgraph of a balanced $t$-blowup of $F^{\rhd}$ for some sufficiently large $t$. Therefore by \Cref{prop.mixedgraph-blowup}, $\theta(F)\geq \theta(F^{\rhd}[t]) = \theta(F^{\rhd})$.
    Since $F^{\rhd}$ is a mixed graph with exactly one directed edge, by~\Cref{lem:onedirectededge}, we have that 
    \[
    \theta(F)\geq \theta\left(F^{\rhd}\right) = 1+\frac1{\undirectedchi{F^{\rhd}}-2} \ge 1 + \frac{1}{\upchi(F)}.
    \]
The final inequality arises by partitioning $V(F^{\rhd}) = V_0 \sqcup V_h \sqcup V_t$ as in~\Cref{def.collapsion}; $V_0$ can be properly colored with at most $\upchi(F)$ colors and we need (at most) a single new color for each of $V_h$ and $V_t$, so $\upchi(F^{\rhd}) \le \upchi(F) + 2$.
On the other hand, the underlying undirected graph $\undirect{F}$ is a subgraph of $F$. Thus by \Cref{prop.undirected}, we have the opposite inequality
\[\theta(F)\leq \theta\left(\undirect{F}\right)= 1+\frac1{\undirectedchi{F}-2}.\qedhere\]
\end{proof}

The proof of \Cref{mixedgraph-general} now follows immediately from the earlier results.
\begin{proof}[Proof of \Cref{mixedgraph-general}]    
We consider each case in turn:
\begin{enumerate}[label=(\roman*)]
    \item By~\Cref{prop.uncollapsible}, if $F$ is uncollapsible, then $\theta(F) = 1$; the only if direction comes from an exhaustive consideration of the below cases.
    \item Note that $F$ satisfies the condition for this case if and only if $F \subseteq K_{\overrightarrow{t,t}}$ for some $t$; by~\Cref{prop.ktt} this is equivalent to $\theta(F) = \infty$ and the result follows.
    \item If $F$ has at most one directed edge, the desired equality follows by~\Cref{lem:onedirectededge}.
    \item In the remaining case, $F$ is collapsible and the result follows by~\Cref{lem.chi-collapsion-ineq}.
    \qedhere
\end{enumerate}
\end{proof}

It may not be clear at first glance how tight (or far from tight) the bounds of~\Cref{mixedgraph-general} are for collapsible mixed graphs. The upper bound is attained by mixed graphs with at most one directed edge (as noted in~\Cref{prop.undirected,lem:onedirectededge}). The lower bound is also tight in general as the following (easily generalizable) example illustrates.

\begin{example}\label{mixedgraph-general-lowerbound}
\begin{figure}[ht]
  \centering
  \includegraphics[width=12cm]{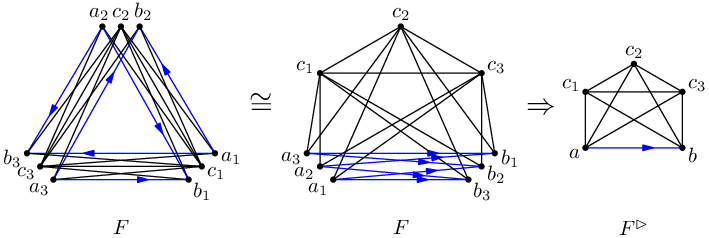}
  \caption{$F$ has $\theta(F) = 1+\frac1{\undirectedchi{F}} = \frac43$.}
  \label{lowerbound-ex}
\end{figure}
For mixed graph $F$ as pictured in~\Cref{lowerbound-ex} with vertices labeled as $V(F) = \{a_1, b_1, c_1, a_2, b_2, c_2, a_3, b_3, c_3\}$, we show that $\theta(F) = 1+\frac1{\undirectedchi{F}} = \frac43$, achieving the lower bound of~\Cref{mixedgraph-general}. To show the matching upper bound on $\theta(F)$, we construct a family of mixed graphs with many directed edges that is $F$-free.

\begin{figure}[ht]
\centering
\begin{minipage}{.3\textwidth}
  \centering
  \includegraphics[width=4cm]{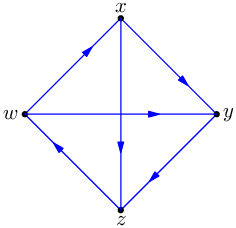}
  \captionsetup{width=0.6\linewidth}
  \caption{Mixed graph $G$}
  \label{fig.k4_for_333}
\end{minipage}%
\begin{minipage}{.35\textwidth}
  \centering
  \includegraphics[width=4.5cm]{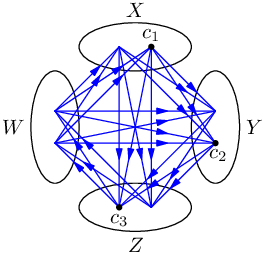}
  \captionsetup{width=0.6\linewidth}
  \caption{Case (i)}
  \label{fig.k4t_for_333_case1}
\end{minipage}%
\begin{minipage}{.35\textwidth}
  \centering
  \includegraphics[width=4.5cm]{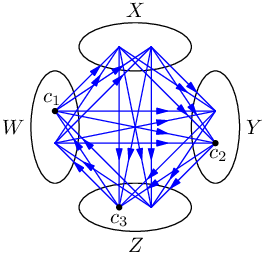}
  \captionsetup{width=0.7\linewidth}
  \caption{Case (ii)}
  \label{fig.k4t_for_333_case2}
\end{minipage}
\end{figure}

Let $G$ be the mixed graph in~\Cref{fig.k4_for_333}. we will show that $F\not\subseteq G[t]$ for all $t \in \ZZ_{>0}$. Suppose to the contrary $F\subseteq G[t]$ for some $t \in \ZZ_{>0}$.
We partition $V(G[t]) = X \sqcup Y \sqcup Z \sqcup W$.
Because $c_1, c_2, c_3$ form a triangle in $F$, in any copy of $F$ in $G$, the images of $c_1, c_2, c_3$ must be in different parts of $G[t]$. We then arrive at a contradiction by checking the following two cases (without loss of generality, by symmetry in $G$), and finding that we cannot find images for all of $V(F)$ in $G[t]$ that preserve $E(F)$.
\begin{enumerate}[label=(\roman*)]
\item  $c_1, c_2, c_3$ are in $X, Y, Z$, respectively, forming a transitive tournament on $3$ vertices,
see \Cref{fig.k4t_for_333_case1};
\item  $c_1, c_2, c_3$ are in $W, Y, Z$, respectively, forming a directed cycle,
see \Cref{fig.k4t_for_333_case2}.
\end{enumerate}

Thus, $F\not\subseteq G[t]$ for all $t \in \ZZ_{>0}$. 
Since $\alpha(G[t])=0$ and $\beta(G[t]) = 6 t^2 / \binom{4t}2 \rightarrow \frac34$ as $t \rightarrow \infty$, we find the desired matching upper bound $\theta(F) \leq \frac43$. 
\end{example}


\section{A variational characterization of \thetaF}\label{sec.variational}
This section gives a variational characterization of $\theta(F)$, showing that $\theta(F)$ is the solution to a finite dimensional optimization problem. Essentially, we show that $\theta(F)$ is the minimum of a function over a finite set of \textit{mixed adjacency matrices}, defined precisely below.
This characterization gives an explicit method of computing $\theta(F)$ for a general given mixed graph $F$, in contrast to~\Cref{mixedgraph-general} which may not provide a tight bound for a specific mixed graph $F$. In~\Cref{sec.algebraic-degree}, we leverage this characterization to study the algebraicity and algebraic degree of $\theta(F).$ 

\subsection{A weighted extremal problem}\label{subsec.weightedextremal}
We begin by considering a slightly different extremal problem on mixed graphs; we will try to maximize a weighted edge count over $F$-free mixed graphs but fix the weight of directed edges in advance. 

\begin{definition}
For $\rho \in (1,\infty)$ and mixed graph $G$, we define the \emph{$\rho$-weighted edge count} of $G$ as $\ecw\rho G :=\ecundir G + \rho \cdot \ecdir G$, i.e., we count weighted edges where undirected edges have weight $1$ and directed edges have weight $\rho$. The \emph{$\rho$-density} is given by $w_{\rho}(G)/\tbinom{v(G)}{2}$.
\end{definition}

In this section, we are motivated by the following question: given a forbidden mixed graph $F$ and fixed $\rho \in (1, \infty)$, what is the asymptotic maximum of the $\rho$-density of a sequence of distinct $F$-free graphs?
This question is closely tied to the problems studied in the previous section; 
in fact, $\theta(F)$ is the largest value of $\rho$ such that for every sequence $\{G_n\}$ of distinct $F$-free graphs,
\begin{equation}\label{eq.theta_equiv}
\limsup_{n\to\infty}\;\frac{\ecw{\rho}{G_n}}{\binom {v(G_n)}2} \le 1.    
\end{equation}
We later employ this fact in our variational characterization of $\theta(F)$. Throughout this section, we consider $\rho \in (1, \infty)$ to be a fixed constant.

The primary result of this section,~\Cref{thm.matrix-maximal}, establishes that we can construct a family of asymptotically maximal (in $\rho$-density) family of mixed graphs by carefully selecting a sufficiently dense but small mixed graph and taking an (asymmetric) blowup. The subsequent analysis adapts the primary technique used in previous work of Brown, Erd\H os, and Simonovits to study extremal problems in directed graphs~\cite{brown_erdos_simonovits}. Our primary objects of study will be \emph{mixed adjacency matrices}, which resemble hypergraph Lagrangians~\cite{keevash} and patterns~\cite{Pikhurko_possible_densities} in form and purpose.

\begin{definition}[Mixed adjacency matrix]\label{def.mixed-adj}
A \emph{mixed adjacency matrix} $A = (U, D)$ is an ordered pair of $r \times r$ matrices where $U$ and $D$ satisfy the following conditions:
  \begin{enumerate}[label=(\roman*)]
    \item $U$ is a symmetric matrix in $\{0, 1\}^{r \times r}$.
    \item $D_{ij}\in\{0,2\}$ for all $i,j\in[r]$, and $D_{ij}\neq 0$ implies that $D_{ji} = 0$. Thus, $D_{ii} = 0$ for all $i\in[r]$.
    \item For all $i,j \in [r]$, at most one of $D_{ij}$ and $U_{ij}$ is nonzero.
  \end{enumerate}
  We say that $r$ is the \emph{size} of $A$,
  that $U$ is the \emph{undirected part} of $A$,
  and that $D$ is the \emph{directed part} of $A$.
\end{definition}

A mixed adjacency matrix $(U,D)$ of size $r$ can be thought of as a ``template'' for constructing associated mixed graphs; the elements of $U$ and $D$ specify the type and direction of edges between the $r$ parts. We make this precise below.
\begin{definition}[Mixed matrix graphs]\label{def.mat-graph}
  Let $A=(U,D)$ be a size $r$ mixed adjacency matrix and fix
  $\vb{x}=( x_1,\dots,x_r ) \in \ZZ_{\geq0}^r$.
Define the \emph{mixed matrix graph} $A\llbracket\vb{x}\rrbracket$ as the mixed graph with vertex set given by the disjoint union $W_1 \sqcup\dots\sqcup W_r$ (such that
  $\abs{W_i} = x_i$ for each $i\in[r]$), and edge set given by the following collection of vertex pairs:
  \begin{enumerate}[label=(\roman*)]
    \item For each $i\in [r]$, 
          \begin{itemize}
            \item If $U_{ii}=1$, the induced subgraph of $A\llbracket\vb{x}\rrbracket$ on $W_i$ is an undirected clique.
            \item If $U_{ii} = 0$, there are no edges with both endpoints in $W_i.$
          \end{itemize}
    \item For each $i,j\in[r]$ with $i\neq j$:
        \begin{itemize}
            \item If $U_{ij}=U_{ji}=1$, for each $w_i \in W_i, w_j \in W_j$ we have undirected edge $w_i w_j.$
            \item If $D_{ij}=2$ and $D_{ji}=0$, for each $w_i \in W_i, w_j \in W_j$ we have directed edge $w_i \widecheck w_j.$
            \item Otherwise ($U_{ij}=U_{ji}=D_{ij}=D_{ji}=0$), we have no edges between $W_i$ and $W_j.$
            \end{itemize}
  \end{enumerate}
\end{definition}

\begin{example}\label{ex.mat-graph-1}
  \Cref{fig.mat-graph-ex} illustrates the mixed matrix graph $\MG A{\vb{x}}$ for $A=(U,D)$,
  \[U = \begin{bmatrix}
      0 & 0 & 0 \\
      0 & 0 & 1 \\
      0 & 1 & 1
    \end{bmatrix}, \;
    D = \begin{bmatrix}
      0 & 2 & 0 \\
      0 & 0 & 0 \\
      0 & 0 & 0
    \end{bmatrix}, \;
    \vb{x}=( 2, 2, 3).
  \]
  Note the undirected edges of $\MG A{\vb{x}}$ are specified by $U$, and the directed ones by $D$.
    \begin{figure}[ht]
    \centering
    \includegraphics[width=6cm]{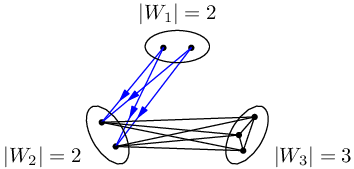}
    \caption{Example mixed matrix graph $\MG A{\vb{x}}$}
    \label{fig.mat-graph-ex}
    \end{figure}
\end{example}

For mixed adjacency matrix $A$ and mixed graph $F$, we say that $A$ is \textit{$F$-free} if the sequence $\{\MG{A}{t\vb 1}\}_{t\in\ZZ_{>0}}$ is $F$-free.
Given mixed adjacency matrix $A = (U, D)$, we also consider a single weighted adjacency matrix defined below that will enable us to approximate the weighted edge count of the associated mixed matrix graphs.

\begin{definition}\label{def.weighted}
  Let $A = (U,D)$ be a mixed adjacency matrix and fix $\rho\in(1,\infty)$. Let the associated \emph{weighted adjacency matrix} be $A_\rho \coloneqq U+\rho D$ with \emph{symmetric part} $\sym{A}{\rho} \coloneqq \frac12(A_\rho+A_\rho^\intercal)$.
\end{definition}

We denote the $(r-1)$-dimensional simplex in $\mathbb R^r$ by $\simplex r$, with
    \[
        \simplex r = \left\{\vb x = (x_1,\dots,x_r) \in \mathbb R^r \mid  \norm{\vb x}_1 = 1,  x_i \geq 0 \text{ for all } i\right\}.
    \]
    
We can formally associate a density to a mixed adjacency matrix.

\begin{definition}[Density]\label{def.mat-density}
  For a mixed adjacency matrix $A$ of size $r$ and fixed $\rho\in (1,\infty)$,
  let $$g_\rho(A) := \max_{\vb{y}\in \simplex{r}} \vb{y}^\intercal A_\rho\vb{y},$$
  which exists by compactness of $\simplex{r}$.
  We call $g_\rho(A)$ the \emph{$\rho$-density of $A$}.
\end{definition}

Towards our goal of constructing $F$-free mixed graphs with maximal $\rho$-density, we seek an ``asymptotically extremal mixed adjacency matrix.'' We first need to identify the relevant properties a mixed adjacency matrix should enjoy in order for it to support a sequence of dense $F$-free mixed graphs. This motivates the following notion, analogous to the ``dense'' matrices in~\cite{brown_erdos_simonovits} and ``minimal'' patterns in~\cite{Pikhurko_possible_densities}.

\begin{definition}[Principal submatrix]\label{def.principal-sub}
  Let $A = (U,D)$ and $A' = (U',D')$ be mixed adjacency matrices; $A'$ is a \emph{principal submatrix} of $A$, denoted $A' \trianglelefteq A$, if
  $U',D'$ are square matrices obtained 
  by removing the same set of corresponding rows and columns from both $U$ and $D$. We call $A'$ a \emph{proper principal submatrix} of $A$ if $A' \neq A$, denoted $A' \vartriangleleft A.$
\end{definition}
\begin{definition}[Condensed mixed adjacency matrix]\label{def.dense-mat}
  For $\rho\in(1,\infty)$, mixed adjacency matrix $A$ is \emph{condensed with respect to $\rho$} if $g_\rho(A')<g_\rho(A)$ for all $A' \vartriangleleft A$.
  If $A$ is a mixed adjacency matrix, we write $A\densesub A'$ when $A' \trianglelefteq A$ of smallest size such that $g_\rho(A')=g_\rho(A)$. Such $A'$ is condensed by definition.
\end{definition}
As the name suggests, for mixed adjacency matrices that are not condensed, we can further compress them by deleting (matching) rows and columns.
We now construct a finite collection of mixed adjacency matrices $\mathcal M_F$ that we will show in~\Cref{thm.matrix-maximal} contains an ``optimal'' $F$-free matrix (in the sense of yielding a sequence of mixed graphs that asymptotically maximizes the $\rho$-density).
Let $K$ be the size $1$ mixed adjacency matrix  $([1],[0])$; observe $\MG{K}{t\vb1} = K_t$ for all $t\in\ZZ_{>0}$.

\begin{definition}\label{def.mf-no-rho}
  For collapsible mixed graph $F$ where $\theta(F)\in(1,\infty)$, let $\mathcal M_F$ be the union of $\{K\}$ and the set of mixed adjacency matrices $A=(U,D)$ that satisfy the following conditions: 
  \begin{enumerate}[label=(\roman*)]
    \item $F\not\subseteq \MG{A}{t\vb1}$ for all $t \in \ZZ_{>0}$, i.e., $A$ is \textit{$F$-free};
    \item $\text{size}(A) \le \undirectedchi{F^\rhd}-1$;
    \item $U_{ii}=0$ for all $i \in [\text{size}(A)]$;
    \item $D$ is not identically zero;
    \item $U_{ij}+U_{ji}+D_{ij}+D_{ji}>0$ for all $i\neq j$.
  \end{enumerate}
\end{definition}
Observe that $\mathcal M_F$ is a finite set; for each $A \in \mcM_F$, we have $\text{size}(A) \le \upchi(F^{\rhd}) - 1$, and there are at most $3^{(r-1)r/2}$ elements of $\mathcal M_F$ of size $r$ (for each $1\leq i<j\leq r$ we have $(U_{ij},U_{ji},D_{ij},D_{ji}) \in \{(1,1,0,0)$, $(0,0,2,0),  (0,0,0,2)\}$).

With the above notation, we are prepared to state our main results. The proofs follow a similar structure to~\cite{brown_erdos_simonovits} and thus we defer the arguments to~\Cref{sec.matrices}.
We first give an \textit{approximation result} that enables us to describe ``maximal'' $F$-free mixed graphs as arising from blowups of some $A \in \mcM_F$ by a unique \textit{optimal vector}. 

\begin{lemma}[Convergence]\label{lem.convergence}
  Fix $\rho\in(1,\infty)$ and size $r$ condensed mixed adjacency matrix $A$. We have the following:
  \begin{enumerate}[label=(\roman*)]
    \item there exists a unique $\vb{y}^* \in \RR_{>0}^r$ such that for any sequence $\left\{\optvecn{A}{\rho}{n}\right\}_{n \in \ZZ_{>0}}$ where each $\optvecn{A}{\rho}{n} \in \ZZ_{\geq0}^r$ is some maximizer of $\ecw{\rho}{\MG{A}{\vb x}}$ over nonnegative $r$-vectors with $\norm{\vb x}_1 = n$, we have that
    $$\lim_{n\to\infty}\frac 1n \optvecn{A}{\rho}{n} = \vb{y}^*;$$
    \item $\vb{y}^*$ is the unique solution to
          \begin{equation}\label{eq.aaug}
            \vb y \in \simplex{r}
            ,\quad
            (\sym{A}{\rho}) \vb{y} = g_\rho(A)\vb1;
          \end{equation}
    \item $\vb{y}^*$ is the unique maximizer of $\vb{y}^\intercal A_\rho \vb{y}$
          among all  $\vb{y}\in\simplex{r}$.
  \end{enumerate}
\end{lemma}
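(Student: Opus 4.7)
The plan is to establish the continuous variational statements (ii) and (iii) first, and then deduce the integer convergence (i) from a compactness argument. The bridge between the discrete and continuous problems is the identity
\[
\ecw{\rho}{\MG{A}{\vb x}}
= \tfrac12\,\vb x^\intercal A_\rho \vb x - \tfrac12\sum_{i} U_{ii}\,x_i
= \tfrac12\,\vb x^\intercal (\sym{A}{\rho})\,\vb x + O(\|\vb x\|_1),
\]
which follows from a direct expansion of \Cref{def.mat-graph} together with the standard fact that $\vb x^\intercal M \vb x = \vb x^\intercal \bigl(\tfrac12(M+M^\intercal)\bigr) \vb x$ for any square $M$. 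In particular, for $\|\vb x\|_1 = n$ the scaled $\rho$-weighted edge count of $\MG{A}{\vb x}$ agrees with the continuous quadratic form evaluated at $\vb x/n$ up to an $O(1/n)$ error.

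For (iii), I would first show that any maximizer $\vb y^* \in \simplex r$ of $\vb y^\intercal (\sym{A}{\rho})\vb y$ must lie in $\RR_{>0}^r$: if some coordinate $y_i^* = 0$, then restricting to the remaining coordinates produces a maximizer on the principal submatrix $A' \vartriangleleft A$ obtained by deleting row and column $i$, and the two-sided inequality $g_\rho(A') \ge g_\rho(A) \ge g_\rho(A')$ contradicts the hypothesis that $A$ is condensed. Given that $\vb y^*$ is interior, the Lagrange condition at the constrained maximum yields $(\sym{A}{\rho})\vb y^* = \lambda \vb 1$ for some $\lambda \in \RR$; contracting with $\vb y^*$ and using $\vb 1^\intercal \vb y^* = 1$ gives $\lambda = \vb y^{*\intercal}(\sym{A}{\rho})\vb y^* = g_\rho(A)$, which establishes (ii) for any such $\vb y^*$. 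For uniqueness, suppose toward contradiction that $\vb y_1 \neq \vb y_2$ are two interior maximizers, and consider the affine line $\vb y(t) = (1-t)\vb y_1 + t\vb y_2$. Since $(\sym{A}{\rho})\vb y_i = g_\rho(A)\vb 1$ for each $i$, a direct expansion shows $\vb y(t)^\intercal (\sym{A}{\rho})\vb y(t) \equiv g_\rho(A)$ along the whole line. Extending $t$ past $[0,1]$ until the first coordinate of $\vb y(t)$ vanishes then produces a boundary maximizer in $\simplex r$, contradicting the strict positivity just established. This proves (iii), and (ii) then follows from the characterization above.

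For (i), I would fix any sequence of integer maximizers $\optvecn{A}{\rho}{n}$ and set $\vb y^{(n)} = \optvecn{A}{\rho}{n}/n \in \simplex r$. Rounding $n\vb y^*$ to a nearby lattice point in $\ZZ_{\geq0}^r$ with $\ell_1$ norm $n$ gives a comparison sequence showing that the integer maximum of $\ecw{\rho}{\MG{A}{\vb x}}$ over $\|\vb x\|_1 = n$ equals $\tfrac{n^2}{2}\,g_\rho(A) + O(n)$. Combining this with the quadratic-form identity above yields $\vb y^{(n)\intercal}(\sym{A}{\rho})\vb y^{(n)} \to g_\rho(A)$. Any subsequential limit of $\{\vb y^{(n)}\}$, which exists by compactness of $\simplex r$, is then a continuous maximizer of the quadratic form and therefore equals $\vb y^*$ by (iii); hence the whole sequence converges to $\vb y^*$.

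The hard part will be the boundary-extension argument in the middle paragraph, which uses condensedness twice: first to force every continuous maximizer into the interior of $\simplex r$, and then to rule out multiple interior maximizers by sliding along a line until a coordinate vanishes. The remaining ingredients (the quadratic-form identity, the Lagrange condition, and the compactness step) are essentially routine.
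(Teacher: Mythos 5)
Your proposal is correct, and parts (ii) and (iii) follow essentially the paper's own route: condensedness forces every maximizer of $\vb y^\intercal A_\rho \vb y$ into the relative interior of $\simplex{r}$ (delete the vanishing coordinate and compare densities of the principal submatrix), Lagrange multipliers then give $(\sym{A}{\rho})\vb y^* = g_\rho(A)\vb 1$, and uniqueness comes from sliding along the affine line between two putative solutions until a coordinate vanishes, which reduces to the boundary case. The one genuine divergence is in part (i). The paper identifies subsequential limits of $\frac1n\optvecn{A}{\rho}{n}$ as solutions of \eqref{eq.aaug} directly, by invoking the approximate degree-regularity of maximal mixed matrix graphs (the Zykov symmetrization statement, \Cref{prop.mat-zykov}): in the limit all coordinates of $(\sym{A}{\rho})\widehat{\vb y}$ are equal, which recovers the linear system. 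You instead show only that any subsequential limit attains the value $g_\rho(A)$ of the quadratic form (via the rounding comparison and the identity $\ecw{\rho}{\MG{A}{\vb x}} = \tfrac12\vb x^\intercal A_\rho\vb x + O(\norm{\vb x}_1)$) and then quote the uniqueness of the maximizer from (iii). Your route is slightly more economical for this lemma, since it does not need the degree-regularity input at all; the paper's route has the minor advantage of exhibiting the linear system \eqref{eq.aaug} as the combinatorially meaningful object (equal weighted degrees across parts), which is the form reused later in the augmentation arguments. Both are complete proofs.
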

The above convergence result identifies the critical properties a maximal blowup of a dense mixed adjacency matrix has, by constructing such a blowup, exhibiting an associated uniqueness property, and demonstrating that the $g_{\rho}(A)$ has a clean algebraic relationship to this blowup.

We are finally ready to precisely associate to each $F$ a matrix in $\mcM_F$ that yields a sequence of asymptotically extremal $F$-free mixed graphs; we defer the proof to~\Cref{sec.matrices}.
\begin{theorem}\label{thm.matrix-maximal}
Fix $\rho \in (1, \infty)$ and let $F$ be a mixed graph with at least one directed edge and $\theta(F) \in(1,\infty)$. Then,
\[
    \limsup_{n\to\infty} \max_{\substack{v(G)=n\\ F\not\subseteq G}} \frac{\ecw{\rho}{G}}{\tbinom{n}{2}} = \max_{B \in\mathcal M_F} g_\rho(B).
\]
\end{theorem}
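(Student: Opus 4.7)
The plan is to prove the inequalities in each direction separately. The lower bound $\max_{B \in \mcM_F} g_\rho(B) \le \limsup_n \max\{\ecw{\rho}{G}/\binom{n}{2} : v(G) = n,\ F \not\subseteq G\}$ is direct. Fix $B = (U,D) \in \mcM_F$ of size $r$; since $F \not\subseteq \MG{B}{t\vb 1}$ for all $t$, and every $\MG{B}{\vb x}$ with $\vb x \in \ZZ_{\geq 0}^r$ sits inside $\MG{B}{(\max_i x_i)\vb 1}$, every integer blow-up of $B$ is $F$-free. Choose $\vb y^* \in \simplex{r}$ achieving $g_\rho(B)$ and integer vectors $\vb x^{(n)}$ with $\|\vb x^{(n)}\|_1 = n$ and $\vb x^{(n)}/n \to \vb y^*$ (e.g.\ by rounding). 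A direct calculation gives $\ecw{\rho}{\MG{B}{\vb x}} = \tfrac{1}{2}\vb x^\intercal A_\rho \vb x - \tfrac{1}{2}\sum_i U_{ii} x_i$, so $\ecw{\rho}{\MG{B}{\vb x^{(n)}}}/\binom{n}{2} \to (\vb y^*)^\intercal A_\rho \vb y^* = g_\rho(B)$. The edge case $B = K$ corresponds to the cliques $K_n$, which are $F$-free since $F$ has at least one directed edge, and give density $1 = g_\rho(K)$.

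For the upper bound $\limsup_n \ecw{\rho}{G_n}/\binom{n}{2} \le \max_{B \in \mcM_F} g_\rho(B)$ for any $F$-free sequence $\{G_n\}$, I would adapt the symmetrization framework of Brown, Erd\H os, and Simonovits. Fix large $n$ and replace $G_n$ with an $F$-free $n$-vertex graph of weakly larger $\rho$-weighted edge count via an iterative symmetrization: repeatedly identify non-adjacent vertices $u,v$ and replace $u$ by a twin of $v$ (or vice-versa), picking whichever direction weakly increases $\ecw{\rho}{\cdot}$. Non-adjacency of $u,v$ preserves $F$-freeness by an argument mirroring \Cref{claim.zykovsymm}. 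Upon saturation, the graph collapses to a mixed matrix graph $\MG{B_n}{\vb x^{(n)}}$, where $B_n$ encodes the shared neighborhood-and-orientation type of each equivalence class of vertices.

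It then remains to show that $B_n$ (or a suitable reduction) lies in $\mcM_F$ with $g_\rho(B_n)$ at least the original $\rho$-density. Conditions (iii)--(v) of \Cref{def.mf-no-rho} can be enforced by local surgeries on $B_n$: parts with $U_{ii}=1$ are split into singletons, pairs of parts with no edges between them are merged, and so on, each step only weakly decreasing $g_\rho$. The size bound $\mathrm{size}(B_n) \le \undirectedchi{F^\rhd} - 1$ follows from an embedding argument: if $B_n$ had size at least $\undirectedchi{F^\rhd}$ satisfying (iv) and (v), one could place the color classes of $F^\rhd$ into parts of $B_n$, aligning the unique directed edge of $F^\rhd$ along a directed edge of $B_n$, and using that edges of any type in $B_n$ accommodate undirected edges of $F^\rhd$ after forgetting directions. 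This embeds $F^\rhd$, and hence (since $F$ is collapsible, $F \subseteq F^\rhd[t]$ for some $t$) $F$ itself, into a large blow-up $\MG{B_n}{t'\vb 1}$, contradicting $F$-freeness. Since $\mcM_F$ is finite, pigeonhole furnishes a single $B \in \mcM_F$ with $B = B_n$ for infinitely many $n$, yielding $\limsup_n \ecw{\rho}{G_n}/\binom{n}{2} \le g_\rho(B) \le \max_{B'\in\mcM_F} g_\rho(B')$.

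The main obstacle is the symmetrization step in the mixed setting. Unlike the classical undirected Zykov argument, or the single-directed-edge case of \Cref{claim.zykovsymm} where both vertices being copied have comparable scalar weighted degrees, the weighted degree of a vertex in a general mixed graph decomposes into separate undirected, head, and tail contributions, so not every pair of non-adjacent vertices admits a direct copy operation that both increases $\ecw{\rho}{\cdot}$ and preserves $F$-freeness. One must carefully define when to copy the head-structure versus the tail-structure, and check that after saturation the equivalence classes really do partition into a mixed matrix graph (rather than a mere approximation). Designing this symmetrization, together with the auxiliary reductions needed to land in $\mcM_F$, is the technically most intricate part of the argument and carries the bulk of the work in \Cref{sec.matrices}.
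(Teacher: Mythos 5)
Your lower bound is correct and essentially matches the paper's: blow-ups of any $B\in\mcM_F$ are $F$-free and realize $g_\rho(B)$ in the limit. The upper bound is where your route diverges from the paper's, and where there is a genuine gap. You propose to Zykov-symmetrize an arbitrary $F$-free graph $G_n$ directly, replacing a vertex $u$ by a twin of a non-adjacent vertex $v$, and assert that this preserves $F$-freeness ``by an argument mirroring \Cref{claim.zykovsymm}.'' That claim works there only because the forbidden graph is $\overrightarrow{K_{r+1}}$, a complete mixed graph: a copy of it can use at most one of two non-adjacent twins, so any new copy pulls back to the original graph. For a general $F$ with a pair of non-adjacent vertices (which includes most $F$ covered by the theorem, e.g.\ those of \Cref{mixedgraph-general-lowerbound} and \Cref{thm.irrational-theta}), duplicating a vertex can create a copy of $F$ that uses \emph{both} twins and does not pull back; already for the undirected path on three vertices, duplicating an endpoint of a single edge creates the path. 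You flag a difficulty in your last paragraph but locate it in the head/tail bookkeeping of weighted degrees; the real obstruction is that the copy operation itself can destroy $F$-freeness, and no choice of which structure to copy repairs this. Termination of the symmetrization, and the claim that saturation yields an exact mixed matrix graph rather than an approximation, are also left unaddressed.

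This is precisely why the paper, following Brown--Erd\H{o}s--Simonovits, does not symmetrize the extremal graphs at all. It instead studies the max--min weighted degree $d^{(n)}_{\rho,F}$ over $F$-free graphs (\Cref{def.mat-max-min-deg}), proves the augmentation lemma (\Cref{lem.augmentation}), which grows a condensed mixed adjacency matrix inside any $F$-free graph of large minimum weighted degree, uses finiteness (\Cref{prop.bf-finite}) to terminate repeated augmentation at an extremal matrix $B^*_{\rho,F}$ with $\lim_n d^{(n)}_{\rho,F}/n = g_\rho(B^*_{\rho,F})$, and converts the edge-count hypothesis into a min-degree hypothesis by vertex deletion (\Cref{claim.g-n-finite-seq} inside \Cref{lem.extremal}). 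Your embedding argument for the size bound $\mathrm{size}(B)\le\undirectedchi{F^\rhd}-1$ does match \Cref{prop.ext-in-mf}, but without a replacement for the symmetrization step the upper bound is not established.
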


\subsection{The variational characterization}\label{subsec.variational}
We will now use \Cref{lem.convergence} and \Cref{thm.matrix-maximal} to give a variational characterization of $\theta(F)$.

\begin{obs}\label{lemma.g-rho-continuous}
Fix mixed adjacency matrix $A$. Then $g_{\rho}(A)$ is continuous in $\rho$ for $\rho \in (1,\infty)$.
\end{obs}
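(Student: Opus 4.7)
The plan is to deduce continuity from the simple observation that $\rho \mapsto A_\rho$ is affine, so $g_\rho(A)$ is the upper envelope of a family of affine functions of $\rho$ over a fixed compact domain. Such upper envelopes are automatically Lipschitz continuous, and concretely I expect a Lipschitz constant of $2$ arising from the uniform bound on the entries of $D$.

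Step by step, I would first fix any $\vb y \in \simplex r$ and note that by \Cref{def.weighted},
\[
\vb y^\intercal A_\rho \vb y \;=\; \vb y^\intercal U \vb y \;+\; \rho \cdot \vb y^\intercal D \vb y,
\]
which is affine in $\rho$. Since $D_{ij} \in \{0,2\}$ and $\vb y \in \simplex r$ has nonnegative entries summing to $1$, we get $0 \le \vb y^\intercal D \vb y \le 2\bigl(\sum_i y_i\bigr)^{\!2} = 2$, so
\[
\bigl|\vb y^\intercal A_{\rho_1} \vb y - \vb y^\intercal A_{\rho_2} \vb y\bigr| \;\le\; 2\,|\rho_1 - \rho_2|
\]
uniformly in $\vb y \in \simplex r$. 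Next I would take maxima: picking a maximizer $\vb y_1 \in \simplex r$ of $\vb y^\intercal A_{\rho_1} \vb y$ (which exists by compactness of $\simplex r$, as already invoked in \Cref{def.mat-density}),
\[
g_{\rho_1}(A) - g_{\rho_2}(A) \;\le\; \vb y_1^\intercal A_{\rho_1} \vb y_1 - \vb y_1^\intercal A_{\rho_2} \vb y_1 \;\le\; 2\,|\rho_1 - \rho_2|,
\]
and the symmetric inequality (using a maximizer $\vb y_2$ for $g_{\rho_2}(A)$) gives the two-sided Lipschitz bound $|g_{\rho_1}(A) - g_{\rho_2}(A)| \le 2|\rho_1 - \rho_2|$, from which continuity on all of $(1,\infty)$ follows.

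There is no real obstacle in this argument; the only subtle point is that a maximizer for one value of $\rho$ need not be a maximizer for the other, but the one-sided bound obtained by evaluating the objective at a single $\vb y$ suffices precisely because we are taking a max rather than an arg-max. Note also that the hypothesis $\rho > 1$ plays no role here; the same proof gives continuity (indeed $2$-Lipschitz continuity) of $g_\rho(A)$ in $\rho$ on all of $\RR$.
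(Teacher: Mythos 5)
Your proof is correct and is essentially the paper's argument: both exploit that $\vb y^\intercal A_\rho \vb y$ is affine in $\rho$ with uniformly bounded slope $\vb y^\intercal D \vb y$ over $\simplex r$, and conclude that the pointwise maximum is Lipschitz. The only difference is cosmetic: the paper uses the antisymmetry condition $D_{ij}\neq 0 \Rightarrow D_{ji}=0$ to get $\vb y^\intercal D \vb y \le 1$ (hence Lipschitz constant $1$), while your cruder bound of $2$ is equally sufficient for continuity.
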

\begin{proof}
    Let $A = (U,D)$. Since $\vb y^\intercal D \vb y \leq 1$ for any $\vb y \in \simplex{r}$, for all $\eps>0$ we have
    \[
    g_\rho(A)
    \leq g_{\rho+\eps}(A)
    = \max_{\vb y\in\simplex{r}} \vb y^\intercal (U+(\rho+\eps)D)\vb y 
    \leq \max_{\vb y\in\simplex{r}} \left(\vb y^\intercal (U+\rho D)\vb y + \eps\right)
    = g_\rho(A) + \eps,
    \]
    and similarly 
    $g_\rho(A) \geq g_{\rho-\eps}(A) \geq g_\rho(A) - \eps.$
    Continuity follows directly.
\end{proof}

We are now ready to give a formal variational characterization of $\theta(F).$

\begin{theorem}\label{thm.theta-variational}
  Let $F$ be a mixed graph with at least one directed edge such that $\theta(F) \in(1,\infty)$. For each  $B=(U,D) \in \mathcal M_F\setminus \{K\}$, let $r$ be its size. Then,
  \[\theta(F) = \min_{B\in\mathcal M_F\setminus \{K\}} \left\{\min_{\vb y \in \simplex{r}}\left\{\frac{1-\vb y^\intercal U \vb y}{\vb y^\intercal D \vb y}\right\} \right\},\]
  where we consider $\frac{1-\vb y^\intercal U \vb y}{\vb y^\intercal D \vb y}$ to be $\infty$ if $\vb y^\intercal D \vb y = 0$.
  Note that this implies $g_{\theta(F)}(B^*) = 1$, where $B^*$ is the mixed adjacency matrix attaining the above minimum.
\end{theorem}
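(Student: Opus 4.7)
The plan is to combine \Cref{thm.matrix-maximal} with an elementary rearrangement of the inequality $g_\rho(B)\le 1$. By \eqref{eq.theta_equiv}, $\theta(F)$ is the largest $\rho>1$ such that every sequence $\{G_n\}$ of $F$-free mixed graphs satisfies $\limsup_n \ecw{\rho}{G_n}/\binom{v(G_n)}{2}\le 1$. Applying \Cref{thm.matrix-maximal}, this condition is equivalent to $\max_{B\in\mathcal M_F} g_\rho(B)\le 1$; and since $K=([1],[0])$ satisfies $g_\rho(K)=1$ for every $\rho$, the constraint collapses to $g_\rho(B)\le 1$ for every $B\in\mathcal M_F\setminus\{K\}$. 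Hence $\theta(F)=\sup\{\rho : g_\rho(B)\le 1 \text{ for all } B\in\mathcal M_F\setminus\{K\}\}$.

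For fixed $B=(U,D)\in\mathcal M_F\setminus\{K\}$ of size $r$, I would unpack $g_\rho(B)\le 1$ as the pointwise inequality $\vb{y}^\intercal U\vb{y}+\rho\,\vb{y}^\intercal D\vb{y}\le 1$ for all $\vb{y}\in\simplex{r}$. Since $U\in\{0,1\}^{r\times r}$, we have $\vb{y}^\intercal U\vb{y}\le (\sum_i y_i)^2=1$, so the inequality is automatic whenever $\vb{y}^\intercal D\vb{y}=0$; otherwise it rearranges to $\rho\le (1-\vb{y}^\intercal U\vb{y})/(\vb{y}^\intercal D\vb{y})$. With the convention that this ratio is $\infty$ when the denominator vanishes, $g_\rho(B)\le 1$ is precisely $\rho\le \min_{\vb{y}\in\simplex{r}}(1-\vb{y}^\intercal U\vb{y})/(\vb{y}^\intercal D\vb{y})$. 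Taking the minimum over $B$ as well yields the right-hand side of the theorem.

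It remains to justify that both minima are attained, and that the supremum defining $\theta(F)$ coincides with this attained value. The outer minimum is over the finite set $\mathcal M_F\setminus\{K\}$; the inner problem minimizes, on the compact simplex $\simplex{r}$, a function that is continuous where $\vb{y}^\intercal D\vb{y}>0$ and $+\infty$ elsewhere, so lower semicontinuity guarantees the infimum is attained. The minimum is finite because property (iv) of \Cref{def.mf-no-rho} forces $D\ne 0$, and then $\vb{y}=\vb{1}/r$ has strictly positive denominator. Finally, if $B^*$ realizes the outer minimum and $\vb{y}^*$ the corresponding inner minimum, then $g_{\theta(F)}(B^*)=1$: feasibility gives $\le$ while the identity $(\vb{y}^*)^\intercal(U^*+\theta(F)D^*)\vb{y}^*=1$ at the optimizing $\vb{y}^*$ gives $\ge$, with continuity of $\rho\mapsto g_\rho(B^*)$ from \Cref{lemma.g-rho-continuous} ensuring the supremum is genuinely attained. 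The only subtlety in this argument is the bookkeeping around the boundary locus $\vb{y}^\intercal D\vb{y}=0$; all of the combinatorial content has been packaged into \Cref{thm.matrix-maximal}, so I expect no serious obstacle beyond this.
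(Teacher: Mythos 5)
Your argument is correct, and the first half (that $\theta(F)\le \frac{1-\vb y^\intercal U\vb y}{\vb y^\intercal D\vb y}$ for every $B\in\mcM_F\setminus\{K\}$ and every $\vb y$) is exactly the paper's. Where you diverge is in establishing equality: the paper applies \Cref{thm.matrix-maximal} at $\rho_k=\theta(F)+\tfrac1k$, extracts by pigeonhole (using finiteness of $\mcM_F$) a single $B^*$ with $g_{\rho_k}(B^*)>1$ along a subsequence, and then invokes continuity of $\rho\mapsto g_\rho(B^*)$ (\Cref{lemma.g-rho-continuous}) to conclude $g_{\theta(F)}(B^*)=1$. You instead observe that for each fixed $\vb y$ the constraint $\vb y^\intercal U\vb y+\rho\,\vb y^\intercal D\vb y\le 1$ is a closed linear condition in $\rho$, so the feasible set $\{\rho: g_\rho(B)\le 1\ \forall B\}$ is exactly the interval $(1,m]$ with $m$ equal to the claimed double minimum; this identifies $\theta(F)=m$ without any limiting argument or pigeonhole, which is cleaner. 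The one point you gloss over: your lower-semicontinuity claim for the ratio near the locus $\vb y^\intercal D\vb y=0$ needs the numerator $1-\vb y^\intercal U\vb y$ to be bounded away from $0$ there, which is where condition (iii) of \Cref{def.mf-no-rho} ($U_{ii}=0$, hence $\vb y^\intercal U\vb y\le 1-\norm{\vb y}_2^2\le 1-\tfrac1r$) enters; without it a sequence with both numerator and denominator tending to $0$ could break attainment of the inner minimum. This only affects whether the inner infimum is attained (and hence the identity $g_{\theta(F)}(B^*)=1$ via your chosen route), not the value of $\theta(F)$, so it is a minor repair rather than a gap.
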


\begin{proof}
    Consider any $B \in \mathcal M_F\setminus \{K\}$ of size $r$ and $\vb{y} \in \simplex{r}$. We first claim that
    $$\theta(F) \leq \frac{1-\vb y^\intercal U \vb y}{\vb y^\intercal D \vb y}.$$
    If not, we can find some mixed adjacency matrix $B$ and $\vb y \in \simplex{r}$ where $\vb y^\intercal U \vb y + \theta(F) \vb y^\intercal D \vb y > 1$. This would imply that $g_{\theta(F)}(B) > 1$, contradicting \eqref{eq.theta_equiv} by~\Cref{thm.matrix-maximal}.
    
    It remains to exhibit $B = (U, D) \in \mcM_F \backslash \{K\}$ and $y \in \simplex{r}$ achieving the aforementioned upper bound. By applying~\Cref{thm.matrix-maximal} to $\rho_k = \theta(F) + \frac1k$ for each $k \in \ZZ_{>0}$, we find for each $k$ an associated $B_k\in \mathcal M_F$ where 
    
    \begin{equation}\label{eq.rho-k-b-k}
        \limsup_{n\to\infty} \max_{\substack{v(G)=n\\ F\not\subseteq G}}
        \alpha(G) + \rho_k \beta(G) = g_{\rho_k}(B_k).   
    \end{equation}

    Since $\rho_k > \theta(F)$, the left hand side of~\eqref{eq.rho-k-b-k} is more than $1$. Therefore, $g_{\rho_k}(B_k)>1$ and thus $B_k \neq K$.
    Since $\mathcal M_F$ is finite, there must exist some matrix $B^* \in \mathcal M_F\setminus \{K\}$ and an infinite sequence $k_1 < k_2 < \cdots$ such that $B^*= B_{k_1} = B_{k_2} = \cdots$. Recalling \Cref{lemma.g-rho-continuous}, we conclude that
    \[g_{\theta(F)}(B^*) = \lim_{k\to\infty} g_{\rho_k}(B^*) \geq 1.\]
    However, by \Cref{thm.matrix-maximal}, $g_{\theta(F)}(B^*)\leq 1$, so $g_{\theta(F)}(B^*) = 1$ and the conclusion follows.
\end{proof}


\section{Algebraic degree of \thetaF}\label{sec.algebraic-degree}

Thus far, the explicit results we have obtained bears striking resemblance to classical extremal graph theory. For instance,~\Cref{mixedgraph-general} shows that the Tur\'an density coefficients of a mixed graph with one directed edge is exactly characterized by the Tur\'an density of its underlying undirected graph, and that in general this is not far from true numerically. However, when we think about the space of \textit{values} the Tur\'an density coefficient can take on, the analogy to undirected graphs will turn out to break down in a striking manner. 

Tur\'an densities of undirected graphs $H$ with $\chi(H) \ge 3$ are always of the form $1 - 1/(\chi(H)-1)$ (and thus $1 - 1/k$ for some $k \in \ZZ_{>0}$) by the Erd\H{o}s-Stone-Simonovits theorem. In contrast, we show by example that $\theta(F)$ is not even always rational (in~\Cref{subsec.irrational-theta}). While we subsequently observe in~\Cref{subsec.algebraic-theta} that Tur\'an density coefficients are always algebraic, this is in some sense the ``strongest'' result we could hope for; we prove~\Cref{thm.alg-degree}, an analogue of a corresponding hypergraph Tur\'an result of~\cite{pikhurko-alg-degree}, showing that there are finite families of mixed graphs $\mcF$ such that $\theta(\mcF)$ has arbitrarily high algebraic degree.

\subsection{A mixed graph \F{} with irrational \thetaF}\label{subsec.irrational-theta}
We begin with a simple, explicit example of a mixed graph with irrational Tur\'an density coefficient. This example also highlights the power of~\Cref{thm.theta-variational}, which enables us to exactly compute the Tur\'an density coefficient of any given mixed graph by solving a finite number of relatively small quadratic fractional programs over the simplex (with each program of dimension bounded by $\upchi(F^{\rhd}) - 1$).

\begin{proposition}\label{thm.irrational-theta}
The mixed graph pictured in~\Cref{fig.irrational-theta} has Tur\'an density coefficient $1+\frac1{\sqrt2}$.

\begin{figure}[ht!]
  \centering
  \includegraphics[width=11cm]{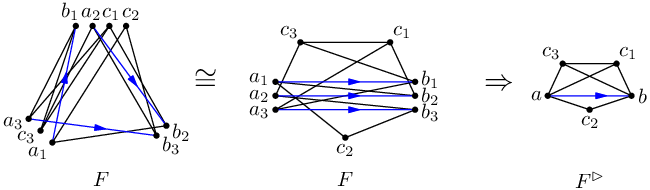}
  \caption{The above mixed graph $F$ has $\theta(F) = 1+\frac1{\sqrt2}$.}
  \label{fig.irrational-theta}
\end{figure}
\end{proposition}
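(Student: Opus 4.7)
The plan is to apply the variational characterization (\Cref{thm.theta-variational}) directly. Since $\mathcal M_F$ is a finite set whose elements have size at most $\undirectedchi{F^\rhd} - 1$, computing $\theta(F)$ reduces to solving finitely many quadratic fractional programs on simplices. The target irrational value suggests that the extremal program has an interior critical point in a simplex of dimension at least $2$, so a size-$1$ or size-$2$ matrix will not suffice as the witness.

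First, I would identify the candidate extremal matrix $B^* = (U^*, D^*)$ of size $3$ given by
\[
U^* = \begin{pmatrix} 0 & 1 & 0 \\ 1 & 0 & 0 \\ 0 & 0 & 0 \end{pmatrix},
\qquad
D^* = \begin{pmatrix} 0 & 0 & 2 \\ 0 & 0 & 2 \\ 0 & 0 & 0 \end{pmatrix},
\]
corresponding to partitioning vertices into parts $W_1, W_2, W_3$ with undirected edges between $W_1$ and $W_2$ and directed edges from $W_1 \cup W_2$ to $W_3$. I would verify $B^* \in \mathcal M_F$ by checking $F \not\subseteq \MG{B^*}{t \vb 1}$ for every $t \in \ZZ_{>0}$, which amounts to a finite case analysis exploiting the head/tail structure of $F$ pictured in~\Cref{fig.irrational-theta}. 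For this $B^*$, the optimization becomes minimizing
\[
\frac{1 - 2 y_1 y_2}{2\,y_3\,(y_1 + y_2)}
\]
over $\vb y \in \simplex 3$. By symmetry in $y_1, y_2$ the minimum occurs at $y_1 = y_2$; substituting $t = y_1 + y_2 = 1 - y_3$ reduces the problem to minimizing $(2 - t^2)/(4 t (1-t))$ on $(0,1)$. Setting the derivative to zero gives $t^2 - 4t + 2 = 0$, so $t = 2 - \sqrt 2$, and back-substitution yields the value $1 + 1/\sqrt 2$.

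Next, I would argue that no other matrix in $\mathcal M_F \setminus \{K\}$ yields a smaller value. Since $|\mathcal M_F|$ is bounded a priori by a function of $\undirectedchi{F^\rhd}$, this is a finite enumeration: for each candidate $B = (U, D)$, one either (a) exhibits an explicit copy of $F$ in $\MG{B}{t \vb 1}$ for some $t$, thereby excluding $B$ from $\mathcal M_F$, or (b) shows directly that $\min_{\vb y \in \simplex{v(B)}} (1 - \vb y^\intercal U \vb y)/(\vb y^\intercal D \vb y) \ge 1 + 1/\sqrt 2$ via first-order conditions on the relevant quadratic program. In particular, size-$2$ matrices in $\mathcal M_F$ all give a ratio of at least $2 > 1 + 1/\sqrt 2$ (since the only possibility yields $1/(2 y_1 y_2) \ge 2$), and larger-size matrices produce either an embedded copy of $F$ or a critical value above $1 + 1/\sqrt 2$.

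The main obstacle is the combinatorial bookkeeping in steps (a) and (b): verifying $F$-freeness of $\MG{B^*}{t \vb 1}$ and enumerating through the other matrices of $\mathcal M_F$ both require carefully tracking head/tail roles in $F$ and in potential embeddings, in the style of~\Cref{mixedgraph-general-lowerbound}. The calculus step producing the irrational critical point $t = 2 - \sqrt 2$ is elementary, but it delivers the headline phenomenon of this section: unlike Erd\H os-Stone-Simonovits densities of the form $1 - 1/k$, the Tur\'an density coefficient $\theta(F)$ can lie outside $\QQ$.
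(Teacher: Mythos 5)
Your overall strategy is exactly the paper's: invoke \Cref{thm.theta-variational}, bound the sizes of matrices in $\mathcal M_F$ by $\undirectedchi{F^\rhd}-1=3$, and reduce to a finite enumeration of quadratic fractional programs. The calculus is also right: minimizing $(2-t^2)/(4t(1-t))$ gives $t=2-\sqrt2$ and the value $1+1/\sqrt2$. However, there is a genuine gap in the combinatorial half: the witness matrix you designate is not $F$-free, hence not in $\mathcal M_F$. Your $B^*$ is the blown-up triangle in which the two directed edges share their \emph{head} (both point from $W_1\cup W_2$ into $W_3$) and the two tails are joined by an undirected edge. The paper's case analysis of all mixed triangles with at least one directed edge shows that the $3$-blowup of every triangle whose two directed edges share a head or share a tail, as well as both tournaments, \emph{does} contain $F$ (these are the graphs $G_3,G_5,G_6,G_7$ in \Cref{fig.exg1234}, with explicit embeddings in \Cref{fig.fsubg3t,fig.fsubg5t,fig.fsubg7t}). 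So the deferred verification ``$F\not\subseteq\MG{B^*}{t\vb1}$ for all $t$'' fails already at $t=3$, and your $B^*$ cannot certify the upper bound $\theta(F)\le 1+1/\sqrt2$.

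The correct extremal matrix is the \emph{path-type} triangle $G_4$, with directed edges $1\to 2$ and $2\to 3$ and an undirected edge $\{1,3\}$, i.e.\ $U_{13}=U_{31}=1$, $D_{12}=D_{23}=2$. Its program is $\min (1-2y_1y_3)/\bigl(2y_2(y_1+y_3)\bigr)$, which after symmetrizing $y_1=y_3$ is literally the same function of $t=y_1+y_3$ that you minimized; this coincidence is why your arithmetic lands on the right number despite the wrong witness. It is only for this path-type template (and the weaker $G_1$, $G_2=\overrightarrow{K_3}$, which give the value $2$) that the $F$-freeness check succeeds: in $G_4[t]$ no part can serve simultaneously as the image of two head vertices or two tail vertices of adjacent directed edges of $F$, which is what the paper's tracking of $a_1,b_1,c_1,c_3$ exploits. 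To repair your proof, replace $B^*$ by $G_4$, carry out the $F\not\subseteq G_4[t]$ argument, and in your step (a) exhibit embeddings of $F$ into the $3$-blowups of the shared-head, shared-tail, cyclic, and transitive triangles (including the one you proposed as the witness).
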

\begin{proof}
By inspection of~\Cref{fig.irrational-theta}, we have $\undirectedchi{F} = 3$ and $\undirectedchi{F^\rhd}=4$. 
We consider all mixed graphs with at least one directed edge and at most 3 vertices, as shown in \Cref{fig.exg1234}.
\Cref{fig.fsubg3t} gives an embedding for $F\subseteq G_3[3]$ which also implies that $F\subseteq G_6[3]$;
\Cref{fig.fsubg5t}  gives an embedding for $F\subseteq G_5[3]$;
and~\Cref{fig.fsubg7t} gives an embedding for $F\subseteq G_7[3]$.

\begin{figure}[ht]
  \centering
  \includegraphics[width=12cm]{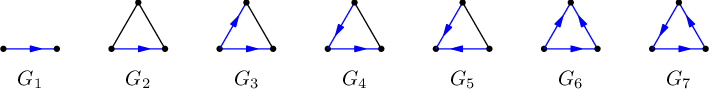}
  \caption{Mixed graphs with at most 3 vertices and at least 1 directed edge}
  \label{fig.exg1234}
\end{figure}
    
\begin{figure}[ht]
  \centering
  \begin{minipage}{.25\textwidth}
    \centering
    \includegraphics[width=3.5cm]{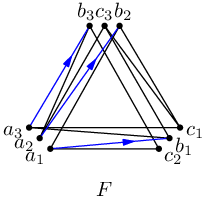}
    \captionsetup{width=0.5\linewidth}
    \caption{$F\subseteq G_3[3]$}
    \label{fig.fsubg3t}
  \end{minipage}%
  \begin{minipage}{.25\textwidth}
    \centering
    \includegraphics[width=3.5cm]{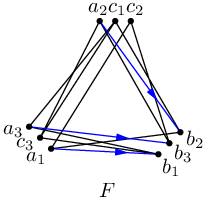}
    \captionsetup{width=0.5\linewidth}
    \caption{$F\subseteq G_5[3]$}
    \label{fig.fsubg5t}
  \end{minipage}%
  \begin{minipage}{.25\textwidth}
    \centering
    \includegraphics[width=3.5cm]{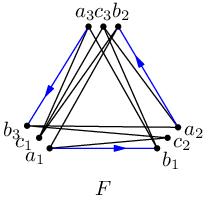}
    \captionsetup{width=0.5\linewidth}
    \caption{$F\subseteq G_7[3]$}
    \label{fig.fsubg7t}
  \end{minipage}%
  \begin{minipage}{.25\textwidth}
    \centering
    \includegraphics[width=3.5cm]{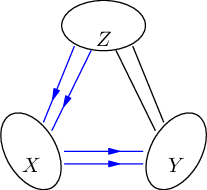}
    \captionsetup{width=0.5\linewidth}
    \caption{$F\not\subseteq G_4[t]$}
    \label{fig.fsubg4t}
  \end{minipage}
\end{figure}
    However, we will show $F\not\subseteq G_4[t]$, which also implies that $F\not\subseteq G_1[t]$ and $F\not\subseteq G_2[t]$, for all $t \in \ZZ_{>0}$.
    Suppose to the contrary there was some $t$ and let $V(G_4[t]) =  X \sqcup Y \sqcup Z$ with each vertex part the $t$-blowup of its corresponding vertex of $G_4$ (see \Cref{fig.fsubg4t}). Consider an embedding of $F$ into $G_4[t]$ with vertices labelled as in~\Cref{fig.irrational-theta}.
    Since $a_1$ is the tail vertex of $a_1\widecheck{b_1} \in E(F)$, we have $a_1 \in X$ or $a_1 \in Z$. We can show in both cases, by first considering which part $b_1$ must fall in and other subsequently forced vertices, that we must embed $c_1, c_3$ into the same part of $G_4[t]$, which yields a contradiction since $c_1 c_3 \in E(F)$.
    
    We can then compute $\theta(F)$ by applying~\Cref{thm.theta-variational}. We consider all $F$-free mixed adjacency matrices (up to isomorphism) with size at most $\undirectedchi{F^\rhd} - 1 = 3$, enumerated in the below table. We also include in the table the associated minimizing vector $\vb x^*$ of $\min_{\vb x\in\simplex{r}}\frac{1-\vb x^\intercal U \vb x}{\vb x^\intercal D \vb x}$.

    \begin{center}
    \begin{tabular}{c|c|c|c}
      $\MG{B}{\vb1}$              & $B=(U,D)$                                                                                                                                                        & $\vb x^*$                                                 & $\displaystyle{\min_{\vb x\in\simplex{r}}\frac{1-\vb x^\intercal U \vb x}{\vb x^\intercal D \vb x}}$ \\ \hline
      \rule{0pt}{8mm}
      $G_1$                       & $\displaystyle{\left(\begin{bmatrix}0&0\\0&0\end{bmatrix},  \begin{bmatrix}0&2\\0&0\end{bmatrix}\right)}$                                                        & $(\frac12,\frac12)$                                       & $2$
         \\ 
      \rule{0pt}{11mm}
      $G_2$                       & $\displaystyle{\left(\begin{bmatrix}0 & 0 & 1 \\ 0 & 0 & 1\\ 1 & 1 & 0 \end{bmatrix}  ,\begin{bmatrix}0 & 2 & 0 \\ 0 & 0 & 0 \\ 0 & 0 & 0 \end{bmatrix}\right)}$ & $( \frac12,\frac12,0 )$                                   & $2$
         \\
      \rule{0pt}{11mm}      $G_4$ & $\displaystyle{\left(\begin{bmatrix}0&0&1\\0&0&0\\1&0&0\end{bmatrix}, \begin{bmatrix} 0 & 2 & 0 \\ 0 & 0 & 2 \\ 0 & 0 & 0 \end{bmatrix}\right)}$                 & $\left(1-\frac1{\sqrt2},\sqrt2-1,1-\frac1{\sqrt2}\right)$ & $1+\frac1{\sqrt2}$
    \end{tabular}
    \end{center}
    Hence, we have as desired that 
    \[\theta(F) = \min_{(U,D)\in\mathcal M_F\setminus \{K\}}\min_{\vb y\in \simplex{r}}\left\{\frac{1-\vb y^\intercal U \vb y}{\vb y^\intercal D \vb y}\right\} = 1+\frac1{\sqrt2}.\]
\end{proof}

\subsection{\thetaF{} is always algebraic}\label{subsec.algebraic-theta}
We now establish algebraicity of $\theta(F)$. This will be a consequence of the observation that $\theta(F)$ can be computed as a rational function of the inverse of a finite matrix with rational coefficients.

\begin{proof}[Proof of \Cref{thm.mixedgraph-algebraic}]
    Let $B^*$ be the minimal mixed adjacency matrix in $\mathcal M_F\setminus\{K\}$ (described in \Cref{thm.theta-variational}), so $g_{\theta(F)}(B^*)=1$.
    Let  $\rho=\theta(F)$ and take $B^*\densesub B$ (recall \Cref{def.dense-mat}) with respect to $\rho$.
    Critically, $g_{\rho}(B) = 1$.
    Let $r$ be the size of $B$. \Cref{lem.convergence} guarantees that there exists a vector $\vb y^* \in \RR_{>0}^r$ which maximizes $\vb y^\intercal \sym{B}{\rho} \vb y$ (recall \Cref{def.weighted}) subject to $\vb y \in \simplex{r}$. In addition, $\vb y^*$ is the unique solution to 
    \[\sym{B}{\rho}\vb{y} = \vb 1, \quad \vb y \in \simplex{r}\]
    since $g_\rho(B) = 1$.
    Now, define the $(r+1)\times r$ matrix
    \[C = \begin{bmatrix} \sym{B}{\rho} \\ \vb 1^{\intercal} \end{bmatrix}.\]
    By the above, $\vb y^*$ is the unique solution to $C\vb y = \vb 1$ in $\simplex{r}$.
    Note that any solution $\vb y$ of $C\vb y = \vb 1$ must satisfy $\norm{\vb y}_1 = 1$.
    Since $\vb y^* \in \RR_{>0}^r$, it is possible to take a neighborhood of $\vb y^*$ in $\RR^r$ which is wholly contained in $\simplex{r}$, so $C\vb y = \vb 1$ has unique solution $\vb y^*$ in the neighborhood.
    This implies $\vb y^*$ is in fact the only solution to $C\vb y = \vb 1$ in $\RR^r$.
    Hence $C$ is a rank $r$ matrix, which means there is some row we can delete to obtain invertible $A \in \RR^{r \times r}$.
    Observe that $\vb y^*$ is the unique solution to $A\vb y = \vb 1$ in $\RR^r$. Consequently, $\vb y^* = A^{-1} \vb 1$.
    Since all entries of $A$ lie in $\{0, 1, \rho\}$, we can express all entries of $A^{-1}$ as $\frac{P(\rho)}{Q(\rho)}$ where $P,Q$ are polynomials in $\ZZ[\rho]$. This gives a corresponding expression for $\vb{y}$, the sum of the rows of $A^{-1}$, as a quotient of integer polynomials of $\rho$.
    By \Cref{thm.theta-variational} we have
    \[\rho =\theta(F) = \max_{\vb y\in\simplex{r}} \frac{1-\vb{y}^\intercal U \vb{y}}{\vb{y}^\intercal D \vb{y}} = \frac{1-\vb{y^*}^\intercal U \vb{y^*}}{\vb{y^*}^\intercal D \vb{y^*}}.\]
    After clearing the denominator, we obtain a polynomial with integer coefficients where $\rho =\theta(F)$ is a root, implying $\theta(F)$ is algebraic as claimed.
\end{proof}

\subsection{Finite families of mixed graphs with arbitrarily high algebraic degree}\label{subsec.high-degree}
We prove \Cref{thm.alg-degree}, an analogue of the main result of~\cite{pikhurko-alg-degree}, where it was observed that there are families of hypergraphs with Tur\'an densities having arbitrarily high algebraic degree. Precisely, we show that for each $k \in \ZZ_{>0}$, there is a finite family of mixed graphs $\mcF$ such that $\theta(\mcF)$ has algebraic degree $k$.

For a finite family $\mcF$ of mixed graphs, we say mixed graph $G$ is \textit{$\mcF$-free} if $F\not\subseteq G$ for all $F \in \mcF$; the Tur\'an density coefficient $\theta(\mcF)$ is defined analogously as the maximum $\rho$ such that 
  \[\alpha(G) + \rho \cdot  \beta(G) \leq 1+o(1)\]
over all $\mcF$-free $n$-vertex mixed graphs $G$. If $\beta(G)=o(1)$ over $\mcF$-free mixed graphs $G$, we say $\theta(\mcF)=\infty$. This is a well-defined quantity by the same reasoning as \Cref{prop.theta-exists}, and the results of~\Cref{sec.variational} all generalize verbatim to finite forbidden families such that at least one member of the family has at least one directed edge.

\begin{lemma}\label{l.find-family}
    Let $B$ = $(U,D)$ be a mixed adjacency matrix of size $r$ satisfying $D \neq \mathbf{0}$ and $U_{ii} = 0$ for all $i \in [r]$, such that the mixed matrix graph $\MG{B}{\vb1}$ has complete underlying undirected graph.
    Then, there exists a finite family $\mcF$ of mixed graphs such that 
    \[\theta(\mcF) = \min_{\vb y\in\simplex{r}} \left\{\frac{1-\vb{y}^\intercal U \vb y}{\vb{y}^\intercal D \vb y}\right\}.
    \]
\end{lemma}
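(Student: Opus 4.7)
My plan is to take $\mcF$ to be the (finite) set of all mixed graphs on at most $r+1$ vertices that are \emph{not} subgraphs of any blowup $\MG{B}{t\vb 1}$, and to show that this choice gives $\theta(\mcF) = \rho^*$, where $\rho^* := \min_{\vb y\in\simplex{r}}\frac{1-\vb y^\intercal U \vb y}{\vb y^\intercal D \vb y}$. The hypotheses on $B$ imply $\rho^*\in(1,\infty)$ (the completeness of $\MG{B}{\vb 1}$ together with $U_{ii}=0$ give $\vb y^\intercal U\vb y + \vb y^\intercal D\vb y \le 1-1/r$, forcing $\rho^*\ge r/(r-1)>1$), and by construction $B$ together with all of its sub-blowups is $\mcF$-free.

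For the upper bound $\theta(\mcF)\le\rho^*$, let $\vb y^*\in\simplex{r}$ attain $\rho^*$ and pick $\vb x^{(n)}\in\ZZ_{\ge 0}^r$ with $\|\vb x^{(n)}\|_1=n$ and $\vb x^{(n)}/n\to\vb y^*$. The $\mcF$-free sequence $G_n=\MG{B}{\vb x^{(n)}}$ satisfies $\alpha(G_n)\to\vb y^{*\intercal}U\vb y^*$ and $\beta(G_n)\to\vb y^{*\intercal}D\vb y^*$, hence $\alpha(G_n)+\rho^*\beta(G_n)\to 1$, forcing $\theta(\mcF)\le\rho^*$.

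For the lower bound $\theta(\mcF)\ge\rho^*$, I first confirm $\theta(\mcF)\in(1,\infty)$ so that the family-generalized variational characterization of \Cref{thm.theta-variational} applies: the upper bound gives finiteness, and since the undirected $K_{r+1}$ belongs to $\mcF$ (no embedding into $B[t]$ exists because the $r+1$ pairwise-adjacent vertices would need $r+1$ distinct classes in $B$, contradicting $U_{ii}=0$ and there being only $r$ classes), every $\mcF$-free $G$ has $K_{r+1}$-free underlying graph and Tur\'an's theorem yields $\theta(\mcF)\ge r/(r-1)>1$. Writing
\[\theta(\mcF)=\min_{B'\in\mathcal M_\mcF\setminus\{K\}}\min_{\vb y\in\simplex{r'}}\frac{1-\vb y^\intercal U'\vb y}{\vb y^\intercal D'\vb y},\]
the key structural claim is that every such $B'=(U',D')$ of size $r'$ is a principal submatrix of $B$ up to relabeling. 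By condition (v) of $\mathcal M_\mcF$, $\MG{B'}{\vb 1}$ has complete underlying graph $K_{r'}$. If $r'\ge r+1$, any $(r+1)$-vertex subset induces a mixed subgraph with complete underlying graph which cannot embed into any $B[t]$ (pigeonhole forces two vertices into the same class of $B$, impossible since $U_{ii}=0$), placing it in $\mcF$ and contradicting $B'\in\mathcal M_\mcF$. Hence $r'\le r$, and applying the same reasoning to $\MG{B'}{\vb 1}$ itself (which has $r'\le r\le r+1$ vertices) gives an embedding $\MG{B'}{\vb 1}\subseteq\MG{B}{s\vb 1}$ inducing an injection $\psi\colon[r']\to[r]$ that preserves every edge type; thus $B'$ coincides with the principal submatrix of $B$ indexed by $\psi([r'])$. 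For any $\vb y\in\simplex{r'}$, extending to $\tilde{\vb y}\in\simplex{r}$ via $\tilde y_{\psi(i)}=y_i$ gives $\vb y^\intercal U'\vb y=\tilde{\vb y}^\intercal U\tilde{\vb y}$ and $\vb y^\intercal D'\vb y=\tilde{\vb y}^\intercal D\tilde{\vb y}$, whence $\frac{1-\vb y^\intercal U'\vb y}{\vb y^\intercal D'\vb y}\ge\rho^*$; minimizing over $B'$ and $\vb y$ then yields $\theta(\mcF)\ge\rho^*$.

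The main obstacle I expect is this structural claim, which requires choosing $\mcF$ large enough to simultaneously exclude oversized $B'$ (size $\ge r+1$) \emph{and} structurally unrelated $B'$ of size $\le r$; both exclusions go through the same pigeonhole on $B$'s $r$ classes, but the latter requires us to include \emph{all} small non-sub-blowups in $\mcF$, not merely those on exactly $r+1$ vertices. The completeness of $\MG{B}{\vb 1}$ assumed in the statement is precisely what ensures (via condition (v)) that $\MG{B'}{\vb 1}$ too has complete underlying graph, without which the embedding into $B[s]$ would not need to be injective and the reduction to a principal submatrix would break down.
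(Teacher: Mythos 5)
Your proof is correct and follows essentially the same route as the paper's: the same family $\mcF$ of mixed graphs on at most $r+1$ vertices that embed in no blowup of $B$, the same verification that $\theta(\mcF)\in(1,\infty)$, and the same use of the family version of \Cref{thm.theta-variational} combined with the completeness/pigeonhole argument forcing any relevant $B'\in\mathcal M_\mcF\setminus\{K\}$ to have size at most $r$ and to sit inside $B$ as a principal submatrix. The only cosmetic differences are that the paper certifies $\theta(\mcF)\le 2$ via $K_{\overrightarrow{t,t}}\subseteq\MG{B}{t\vb1}$ rather than via your explicit blowup sequence, and uses $\overrightarrow{K_{r+1}}\in\mcF$ where you use the undirected $K_{r+1}$.
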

\begin{proof}
    We let $\mcF$ be the family of mixed graphs $F$ such that $v(F) \le r+1$ and $F \not \subseteq \MG{B}{t\vb1}$ for all $t \in \ZZ_{>0}$ (so that $B$ is \textit{$F$-free}).
    We first check that $\theta(\mcF) \in (1, \infty)$. Since $D$ is not the zero matrix, $K_{\overrightarrow{t,t}} \subseteq \MG{B}{t\vb1}$. Consequently, $K_{\overrightarrow{t,t}}$ is $\mcF$-free for all $t \in \ZZ_{>0}$. 
    This implies that $\theta(\mcF) \le 2$ by \Cref{prop.ktt}.
    On the other hand, observe that $\overrightarrow{K_{r+1}} \in \mcF$; hence, $\theta(\mcF) \geq \theta(\overrightarrow{K_{r+1}}) = 1+\frac1{r-1}>1.$
    
    Hence we may apply the variational characterization of~\Cref{thm.theta-variational} (for family $\mcF$), deducing that there exists $B' = (U', D')\in\mathcal M_\mcF\setminus\{K\}$ of size $r'$ such that $\theta(\mcF) = \min_{\vb y\in\simplex{r'}}\frac{1-\vb{y}^\intercal U' \vb y}{\vb{y}^\intercal D' \vb y}$. By~\Cref{def.mf-no-rho}, $B'$ is $\mcF$-free, $\MG{B'}{\vb1}$ has complete underlying undirected graph, $D' \neq \mathbf{0}$, and $U'_{ii} = 0$ for all $i$; furthermore $r'\leq r$ since $\overrightarrow{K_{r+1}}\subseteq\mcF$.
    Since $\MG{B'}{\vb 1}$ is $\mcF$-free with $r' \le r < r+1$ vertices, for some $t \in \ZZ_{>0}$ it is true that $\MG{B'}{\vb 1} \subseteq \MG{B}{t\vb 1}$. Since both $B$ and $B'$ have complete underlying undirected graphs, in fact, we must have $\MG{B'}{\vb 1} \subseteq \MG{B}{\vb 1}$, i.e., $B' \trianglelefteq B$, implying that
    \[\theta(\mcF) = \min_{\vb y\in\simplex{r'}}\left\{\frac{1-\vb{y}^\intercal U' \vb{y}}{\vb{y}^\intercal D' \vb{y}}\right\} \geq \min_{\vb{y}\in\simplex{r}} \left\{\frac{1-\vb{y}^\intercal U \vb{y}}{\vb{y}^\intercal D \vb{y}}\right\}.\]
    The reverse inequality follows since $B$ is $\mcF$-free by definition of $\theta(\mcF)$.
\end{proof}

Define a sequence of mixed adjacency matrices $\{B_k = (U_k, D_k)\}_{k \in \ZZ_{\geq0}}$ by $B_0 = ([0],[0])$ and
\[
    B_{k+1} =
    \left(\begin{bmatrix}
    {\tiny \begin{matrix}0&0\\0&0\end{matrix}}                & {\tiny \begin{matrix}0&\cdots&0\\1&\cdots&1\end{matrix}} \\
    {\tiny \begin{matrix}0&1\\\vdots&\vdots\\0&1\end{matrix}} & {\Huge U_k}
    \end{bmatrix},
    \begin{bmatrix}
    {\tiny \begin{matrix}0&2\\0&0\end{matrix}} & {\tiny\begin{matrix} 2 & \cdots & 2\\0 & \cdots & 0\end{matrix}} \\ {\tiny \begin{matrix}0 & 0 \\ \vdots & \vdots \\ 0 & 0 \end{matrix}} & {\Huge D_k}
    \end{bmatrix}\right)
\]
so that $B_k$ has size $2k+1$. For each $k \in \ZZ_{\geq0}$, let $\mcF_k$ be the finite family given by \Cref{l.find-family} for $B_k$, so
\[\theta(\mcF_k) = \min_{\vb{y}\in\simplexx{2k}}\left\{\frac{1-\vb{y}^\intercal U_k \vb{y}}{\vb{y}^\intercal D_k \vb{y}}\right\}.\]
Since $B_k \vartriangleleft B_{k+1}$, we have $\theta(\mcF_k) \geq \theta(\mcF_{k+1})$.

\begin{definition}
    For mixed adjacency matrix $B$ of size $r$ and $\rho\in(1,\infty)$, call vector $\vb{y}^* \in\simplex{r}$ a \emph{maximizing vector of $B$ with respect to $\rho$} if
    \[{\vb{y}^*}^\intercal B_\rho \vb{y}^* = g_\rho(B) = \max_{\vb{y}\in\simplex{r}} \vb{y}^\intercal B_\rho \vb{y}.\]
\end{definition}

\begin{proposition}
    Suppose $\vb{y}^* = (y^*_1, \dots, y^*_{2k+1})$ is a maximizing vector of $B_k$ with respect to some $\rho \in \big(1, \theta(\mcF_k)\big]$.
    Then, the vector
    \[
    \left(u^*,v^*,(1-u^*-v^*)y^*_1, \dots, (1-u^*-v^*)y^*_{2k+1}\right),
    \]
    where $u^*=\frac{\rho(2-g_\rho(B_k))-1}{4\rho-2\rho g_\rho(B_k)-1}$
    and $v^*=\frac{\rho(1-g_\rho(B_k))}{4\rho-2\rho g_\rho(B_k)-1}$,
    is maximizing for $B_{k+1}$ with respect to $\rho$.
\end{proposition}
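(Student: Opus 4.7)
The plan is to exploit the block structure of $B_{k+1}$ to reduce the maximization problem on $\simplexx{2k+2}$ to a two-dimensional concave maximization, then identify $(u^*, v^*)$ as its critical point.

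First, I would parametrize an arbitrary vector in $\simplexx{2k+2}$ as $\vb{z} = (u, v, (1-u-v)\vb{y}')$ with $u, v \ge 0$, $u+v \le 1$, and $\vb{y}' \in \simplexx{2k}$. Reading the block form of $B_{k+1}$—the first vertex sends a directed edge to the second and directed edges to every vertex of the $B_k$-part, the second sends undirected edges to the $B_k$-part, and the remaining block is precisely $(U_k, D_k)$—a direct expansion gives
\[
\vb{z}^\intercal (U_{k+1} + \rho D_{k+1}) \vb{z} = 2\rho uv + 2\rho u(1-u-v) + 2v(1-u-v) + (1-u-v)^2 {\vb{y}'}^\intercal (U_k + \rho D_k) \vb{y}'.
\]
Since ${\vb{y}'}^\intercal (U_k + \rho D_k) \vb{y}' \le g_\rho(B_k)$ with equality at $\vb{y}' = \vb{y}^*$, maximizing over $\vb{z}$ decouples: the optimal $\vb{y}'$ is $\vb{y}^*$, and the remaining task is to maximize
\[
h(u,v) := 2\rho uv + 2\rho u(1-u-v) + 2v(1-u-v) + (1-u-v)^2 g,
\]
where I abbreviate $g := g_\rho(B_k)$, over the triangle $T = \{(u,v) : u, v \ge 0,\ u+v \le 1\}$.

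Next, I would write $\nabla h = 0$ as two linear equations in $u$ and $v$. Subtracting them gives $v = 1 - \rho + (2\rho - 1)u$, and back-substitution yields the claimed formulas
\[
u^* = \frac{\rho(2-g) - 1}{4\rho - 2\rho g - 1}, \qquad v^* = \frac{\rho(1-g)}{4\rho - 2\rho g - 1}.
\]
The main step remaining is to certify this critical point is the global maximum. Since $h$ is quadratic, this reduces to showing that its Hessian is negative definite, which amounts to verifying $g < 2\rho$ and $\rho(4 - 2g) > 1$. The hypothesis $\rho \in (1, \theta(\mcF_k)]$ gives $g_\rho(B_k) \le 1$ (by monotonicity of $g_\rho$ in $\rho$ combined with the definition $\theta(\mcF_k) = \min \frac{1 - \vb{y}^\intercal U_k \vb{y}}{\vb{y}^\intercal D_k \vb{y}}$, which forces $g_{\theta(\mcF_k)}(B_k) = 1$), and then both inequalities are immediate from $\rho > 1$.

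Finally, I would confirm the critical point lies in $T$: the denominator $4\rho - 2\rho g - 1 > 0$ since $\rho > 1$ and $g \le 1$; the numerators satisfy $u^* > 0$ and $v^* \ge 0$ for the same reasons; and a direct computation gives $1 - u^* - v^* = \rho/(4\rho - 2\rho g - 1) > 0$. The main (and only nontrivial) obstacle is the decoupling step—verifying that the quadratic form on $\simplexx{2k+2}$ really splits as claimed and that the maximum over $\vb{y}'$ is achieved precisely by $\vb{y}^*$—but this follows directly from the block expansion combined with the assumption that $\vb{y}^*$ maximizes the $B_k$-form. The boundary case $g = 1$ (which occurs when $\rho = \theta(\mcF_k)$) produces $v^* = 0$, but this sits harmlessly on $\partial T$ and the concavity argument still delivers a global max.
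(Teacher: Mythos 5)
Your proposal is correct and follows essentially the same route as the paper: the same block expansion of $\vb{z}^\intercal (B_{k+1})_\rho\vb{z}$, the same reduction to the two-variable concave quadratic $h(u,v)$ with the tail fixed at $(1-u-v)\vb{y}^*$, and the same negative-definiteness argument (the paper certifies it by an explicit sum-of-squares identity, you by the leading-minor conditions $g<2\rho$ and $4\rho-2\rho g-1>0$, which are equivalent). Your added verifications that the critical point solves $\nabla h=0$ and lies in the triangle are details the paper only asserts, and they check out.
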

\begin{proof}
    Let $\vb{y} = (u, v, y_1, \dots, y_{2k+1})$ be a vector of length $2k+3$; and let $\vb{y}' = \frac1{1-u-v} (y_1, \dots, y_{2k+1})$. Then we have
    \[\vb{y}^\intercal (B_{k+1})_\rho \vb{y} = 2\rho u(1-u) + 2v(1-u-v) + (1-u-v)^2 {\vb{y}'}^\intercal (B_k)_\rho \vb{y}'.\]
    From above, we see there exists some maximizing vector of $B_{k+1}$ with respect to $\rho$ (that we term $\widehat{\vb{y}} = (u,v,\widehat y_1, \dots, \widehat y_{2k+1})$) such that $(\widehat y_1, \dots, \widehat y_{2k+1}) = (1-u-v) \vb{y}^*$.
    We define
    \[f(u, v) := \widehat{\vb{y}}^\intercal (B_{k+1})_\rho \widehat{\vb{y}} = 2\rho u(1-u) + 2v(1-v) - 2uv + (1-u-v)^2 g_\rho(B_k).\]
    We then compute the Hessian of $f(u, v)$ for $(u, v) \in \triangle^2$ as
    \[\nabla^2 f(u, v) = \begin{bmatrix}
    2g_\rho(B_k) - 4\rho & 2g_\rho(B_k)-2 \\ 2g_\rho(B_k)-2 & 2g_\rho(B_k) - 4
    \end{bmatrix}.\]
    Since $\rho \in (1, \theta(\mcF_k)]$ we have that $g_{\rho}(B_k) \in [0, 1]$. Thus for any vector $\vb x = (x_1, x_2) \in \RR^2\setminus\{(0,0)\}$,
    \[
    \vb x^\intercal \left(\nabla^2 f(u, v)\right) \vb x = -\left((4\rho-2)x_1^2 + 2x_2^2 + (2-2g_\rho(B_k))(x_1+x_2)^2\right) < 0.
    \]
    Therefore, $\nabla^2 f(u, v)$ is negative definite, implying that
    $f(u, v)$ is concave over $\triangle^2$.
    At $u=u^*$ and $v=v^*$ (for $u^*, v^*$ as given above which satisfy $(u^*, v^*) \in \triangle^2$),
    we have $$\frac{\partial\big(\widehat{\vb{y}}^\intercal (B_{k+1})_\rho \widehat{\vb{y}}\big)}{\partial u} = \frac{\partial\big(\widehat{\vb{y}}^\intercal (B_{k+1})_\rho \widehat{\vb{y}}\big)}{\partial v} = 0,$$
    hence these values of maximize $f(u, v)$ and the conclusion follows.
\end{proof}
\begin{corollary}\label{prop.alg-recur-g}
    For $\rho\in \big(1,\theta(\mcF_{k})\big]$,
    \[g_\rho(B_{k+1}) = \frac{\rho^2 (2-g_\rho(B_k))}{2\rho (2-g_\rho(B_k)) - 1}.\]
\end{corollary}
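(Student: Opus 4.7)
The plan is to apply the previous proposition directly: it hands us an explicit maximizing vector for $B_{k+1}$ with respect to $\rho$, so $g_\rho(B_{k+1})$ is obtained by evaluating the function $f(u,v)$ from the proof of that proposition at the particular point $(u^*, v^*)$. The entire task reduces to simplifying
\[
g_\rho(B_{k+1}) \;=\; f(u^*, v^*) \;=\; 2\rho u^*(1-u^*) + 2v^*(1-v^*) - 2u^*v^* + (1-u^*-v^*)^2\, g_\rho(B_k),
\]
so no new conceptual input is needed, only careful algebra.

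To make the computation tractable I would set $s := g_\rho(B_k)$ and $D := 2\rho(2-s)-1$, so that the given formulas read $u^* = (\rho(2-s)-1)/D$ and $v^* = \rho(1-s)/D$. A short calculation shows $1 - u^* - v^* = \rho/D$, which is the key identity that will make factors cancel at the end. Substituting these expressions into $f(u^*, v^*)$ puts everything over the common denominator $D^2$, so the task becomes showing that the numerator equals $\rho^2(2-s)\cdot D$.

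To handle the numerator, I would group the two terms involving $(1-s)$, using the identities $D - \rho(2-s) + 1 = \rho(2-s)$ and $D - \rho(1-s) = \rho(3-s)-1$ to rewrite each factor $(1-u^*)$, $(1-v^*)$ in a compact form. Pulling out a global factor of $\rho$ and combining the $(1-s)$ contributions collapses the $\rho(3-s)-1$ and $\rho(2-s)-1$ pieces, leaving only $2\rho(1-s) + \rho s$. After further simplification the numerator reduces to $\rho^2\bigl[2\rho(2-s)^2 - (2-s)\bigr] = \rho^2 (2-s)(2\rho(2-s)-1) = \rho^2(2-s)\, D$, one factor of $D$ cancels, and the claimed expression $\rho^2(2-s)/D$ falls out.

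The main obstacle is purely bookkeeping: the formula for $f(u^*, v^*)$ produces many terms of similar shape, and one has to organize the cancellations carefully (in particular, to spot the common factor $2-s$ in the numerator so that $2\rho(2-s)-1 = D$ can be extracted). There is no issue about whether $(u^*, v^*) \in \triangle^2$ or whether the previous proposition applies, since the hypothesis $\rho \in (1, \theta(\mcF_k)]$ guarantees $g_\rho(B_k) \in [0,1]$ and hence $D > 0$ and $u^*, v^* \ge 0$ with $u^* + v^* \le 1$; this range of $\rho$ was already used in the concavity argument for $f$ in the previous proposition, so the extremality of $(u^*, v^*)$ is legitimate.
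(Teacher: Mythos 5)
Your proposal is correct and matches the paper's intent: the corollary is stated as an immediate consequence of the preceding proposition, obtained exactly by substituting $(u^*,v^*)$ into $f$ and simplifying, and your algebra (in particular the identities $1-u^*-v^*=\rho/D$ and $1-u^*=\rho(2-s)/D$, leading to the numerator $\rho^2(2-s)D$) checks out.
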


We are now ready to prove \Cref{thm.alg-degree}.
\begin{proof}[Proof of \Cref{thm.alg-degree}]
    For all $k\geq0$ define the polynomials $P_k(\rho), Q_k(\rho)$ recursively by
    \begin{align*}
        P_0(\rho) = 0;                                & \quad Q_0(\rho) = 1;                                              \\
        P_{k+1}(\rho) = \rho^2(2Q_k(\rho)-P_k(\rho)); & \quad Q_{k+1}(\rho) = (4 \rho - 1) Q_k(\rho) -  2 \rho P_k(\rho).
    \end{align*}
    By induction on $k$, we can observe the following for all $k \in \ZZ_{>0}$:
    \begin{enumerate}[label=(\roman*)]
        \item $g_\rho(B_{k}) = \frac{P_{k}(\rho)}{Q_{k}(\rho)}$ for all $\rho \leq \theta(\mcF_{k-1})$ (by \Cref{prop.alg-recur-g});
        \item $P_k$ has degree $2k$, and $Q_k$ has degree $2k-1$;
        \item $Q_k(0) = \pm 1$;
        \item $\rho^2 \mid P_k$, $P_k$ has all even coefficients, and its leading coefficient is $\pm2$.
    \end{enumerate}
    Hence for $k\geq1$ we know
    \[P_{k}(\rho)-Q_{k}(\rho) = \big(\rho^2(2Q_{k-1}(\rho)-P_{k-1}(\rho))\big) - \big(2\rho(2Q_{k-1}(\rho)-P_{k-1}(\rho)) - Q_{k-1}(\rho)\big)\]
    is a polynomial of degree $2k$, with constant term $\pm1$, all other coefficients even, and leading coefficient $\pm2$. Hence the reciprocal polynomial of $P_{k} - Q_{k}$ (i.e., the polynomial with the coefficients of $P_{k} - Q_{k}$ in reverse order) is irreducible over $\ZZ[\rho]$ by Eisenstein's criterion on the prime $2$, which implies $P_{k} - Q_{k}$ itself is irreducible as well.
    
    Finally, since $\theta(\mcF_{k}) \leq \theta(\mcF_{k-1})$ and $g_{\theta(\mcF_{k})}(B_{k}) = 1$, we have by condition (1) that  $P_{k}(\rho) = Q_{k}(\rho)$. In particular, $\theta(\mcF_{k})$ must be a root of $P_{k}(\rho) - Q_{k}(\rho)$, which is an irreducible polynomial of degree $2k$ with integer coefficients. Hence $\theta(\mcF_{k})$ has algebraic degree $2k$, implying that for every even algebraic degree $2k$ for $k \in \ZZ_{>0}$, we have some family $\mcF_k$ with $\theta(\mcF_k)$ having algebraic degree $2k$ (\Cref{thm.irrational-theta} gives an example mixed graph $F$ where $\theta(F)$ has algebraic degree $2$).

    For odd algebraic degrees, let $B_k'$ be the  principal submatrix obtained by deleting the first row/column from the matrices in $B_k$; define $\mcF_k'$ for $B_k'$ in the same fashion as $\mcF_k$ is defined for $B_k$. By repeating essentially the same argument as above, we find that $\theta(\mcF_k')$ has algebraic degree $2k-1$ for all $k\geq1$, which means all odd algebraic degrees are attainable.
\end{proof}

\section*{Acknowledgements}

This paper represents the results of a research project through the MIT PRIMES-USA program. The authors would like to thank the organizers of the PRIMES-USA program for providing this research opportunity and for helpful feedback. We would also like to thank Professor Yufei Zhao for helpful suggestions and feedback. Mani was supported by a Hertz Fellowship and the NSF Graduate Research Fellowship Program.

\clearpage
\appendix
\begin{appendices}


\section{Mixed adjacency matrices}\label{sec.matrices}
We generalize the main arguments of~\cite{brown_erdos_simonovits} to the setting of mixed graphs. The main ideas bear strong resemblance to those in~\cite{brown_erdos_simonovits}, but the notion of subgraphs in mixed graphs makes the analysis more difficult. Unless otherwise specified, in this section, we consider $\rho \in (1, \infty)$ to be a fixed constant. Also, we fix a mixed adjacency matrix $A$ of size $r$ in most of the section.
 
\begin{proposition}\label{prop.matrix-uau}
  Fix $\rho \in (1, \infty)$ and let $A$ be a size $r$ mixed adjacency matrix. For all $\vb x\in\ZZ_{\geq0}^r$, 
  \[\frac12 \vb{x}^\intercal A_\rho \vb{x} - \frac12 \norm{\vb x}_1
  \leq \ecw{\rho}{\MG{A}{\vb{x}}}
  \leq\frac12 \vb{x}^\intercal A_\rho \vb{x}.\]
\end{proposition}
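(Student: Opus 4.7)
The approach is a direct edge-count calculation, comparing the combinatorial description of $\MG{A}{\vb x}$ to the quadratic form $\vb x^\intercal A_\rho \vb x = \vb x^\intercal U \vb x + \rho \vb x^\intercal D \vb x$. The bookkeeping splits cleanly into the undirected and directed contributions.

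First, I would count undirected edges. From \Cref{def.mat-graph}, the undirected edges of $\MG{A}{\vb x}$ break up as: within each $W_i$, a clique of size $x_i$ if $U_{ii}=1$ (contributing $U_{ii}\binom{x_i}{2}$ edges); and between $W_i$ and $W_j$ with $i<j$, a complete bipartite graph when $U_{ij}=U_{ji}=1$ (contributing $U_{ij}x_ix_j$ edges). Using symmetry of $U$ to pair off the $i<j$ and $j<i$ terms, I would show
\[
\ecundir{\MG{A}{\vb x}} \;=\; \sum_{i} U_{ii}\binom{x_i}{2} + \sum_{i<j} U_{ij}x_ix_j \;=\; \tfrac12\vb x^\intercal U \vb x \;-\; \tfrac12 \sum_i U_{ii}x_i,
\]
where the $-\tfrac12\sum_i U_{ii}x_i$ correction comes from the discrepancy between $x_i(x_i-1)$ appearing in $\binom{x_i}{2}$ and the $x_i^2$ appearing on the diagonal of $\vb x^\intercal U \vb x$.

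Next, I would count directed edges. By \Cref{def.mixed-adj}, $D$ has zero diagonal and $D_{ij}+D_{ji}\in\{0,2\}$ for $i\ne j$, with a pair $(W_i, W_j)$ contributing $x_i x_j$ directed edges exactly when $\{D_{ij},D_{ji}\}=\{0,2\}$. Thus
\[
\ecdir{\MG{A}{\vb x}} \;=\; \sum_{i<j} \tfrac{D_{ij}+D_{ji}}{2}\,x_ix_j \;=\; \tfrac12 \sum_{i\ne j} D_{ij}x_ix_j \;=\; \tfrac12 \vb x^\intercal D \vb x,
\]
with no correction term — the factor-of-$2$ convention in $D_{ij}\in\{0,2\}$ is exactly what makes this identity exact.

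Combining via $A_\rho = U+\rho D$ gives $\ecw{\rho}{\MG{A}{\vb x}} = \tfrac12 \vb x^\intercal A_\rho \vb x - \tfrac12 \sum_i U_{ii} x_i$. Since $U_{ii}\in\{0,1\}$ and $x_i\geq 0$, the error term satisfies $0 \le \sum_i U_{ii}x_i \le \norm{\vb x}_1$, which yields both inequalities in the claim. There is essentially no obstacle beyond careful bookkeeping; the only subtle point is to note where each of the two ``factor of $\tfrac12$'' conventions (the one in the definition of $D$, and the combinatorial one from $\binom{x_i}{2}$) enters, so that the correction appears only for $U$ and is linear in $\vb x$.
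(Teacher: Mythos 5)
Your proof is correct and matches the paper's argument, which likewise counts the within-component undirected edges as $U_{ii}\binom{x_i}{2}$ and the between-component contributions as $\tfrac12(U_{ij}+U_{ji})x_ix_j$ and $\tfrac12(D_{ij}+D_{ji})x_ix_j$ before summing. Your version just makes the error term $\tfrac12\sum_i U_{ii}x_i$ and the role of the $D_{ij}\in\{0,2\}$ convention explicit, which the paper leaves implicit.
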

\begin{proof}
This is similar to Equation (1) of \cite{brown_erdos_simonovits}.
Let $A = (U, D)$ and $\vb{x} = (x_1,\dots,x_r)$. Using notation from \Cref{def.mat-graph}, let $\MG{A}{\vb{x}}$ have components $W_1, \dots, W_r$.
The counts of undirected edges and directed edges  between vertex components $W_i$ and $W_j$ are $\frac12(U_{ij} + U_{ji}) x_i x_j$ and 
  $\frac12(D_{ij} + D_{ji}) x_i x_j$, respectively, and the number of undirected edges within component $W_i$ is
  $U_{ii}\binom{x_i}{2}$.
  Summing yields the desired result.
\end{proof}

We note a crude bound on the density of any mixed adjacency matrix that will be useful later; $g_\rho(A)<\rho$, as for each mixed adjacency matrix $A$, all terms of $\sym{A}{\rho}$ are at most $\rho$.

We will establish a link between the density $g_\rho(A)$ and the asymptotic maximum of $\ecw{\rho}{\MG{A}{\vb x}}$ as $\norm{\vb x}_1$ grows large. To do this, we will consider the following analogue of Definition 2 in \cite{brown_erdos_simonovits}:
\begin{definition}[Maximal mixed matrix graph]\label{def.max-mat}
  Let $A$ be a mixed adjacency matrix of size $r$, choose $\rho\in (1,\infty)$, and let $n \in \ZZ_{\geq0}$.
  \begin{enumerate}[label=(\roman*)]
    \item Let $\optvecn{A}{\rho}{n}$ maximize
    $\ecw{\rho}{\MG{A}{\vb x}}$
    over all $\vb x \in \ZZ_{\geq0}^r$ with $\norm{\vb{x}}_1 = n$.
    \item Let $\MMG{A}{\rho}{n} \coloneqq \MG{A}{\optvecn{A}{\rho}{n}}$ be the \emph{maximal mixed matrix graph} on $n$ vertices.
  \end{enumerate}
\end{definition}

In essence, $\MMG{A}{\rho}{n}$ is the $n$-vertex asymmetric blowup of $A$ with the largest weighted edge count of all such blowups.
By applying \Cref{def.max-mat} and \Cref{prop.matrix-uau} we can draw the following conclusion, similar to Equation 3 in~\cite{brown_erdos_simonovits}:
\begin{proposition}\label{prop.fin_inf_eq}
  Let $A$ be a mixed adjacency matrix and $\rho\in(1,\infty)$. For all positive integers $n$,
  \[\ecw{\rho}{\MMG{A}{\rho}{n}} =\frac{n^2}{2} g_\rho(A) + o\left(n^2\right).\]
\end{proposition}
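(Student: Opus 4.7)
The plan is to sandwich $\ecw{\rho}{\MMG{A}{\rho}{n}}$ between two quantities that both equal $\tfrac12 n^2 g_\rho(A)$ up to an $O(n)$ error, by combining \Cref{prop.matrix-uau} with the scaling relationship between the integer vector $\vb x \in \ZZ_{\geq0}^r$ with $\norm{\vb x}_1 = n$ and a rescaled probability vector $\vb y = \vb x/n \in \simplex{r}$.

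For the \emph{upper bound}, I would take $\vb{x}^* \coloneqq \optvecn{A}{\rho}{n}$, so $\norm{\vb{x}^*}_1 = n$, and set $\vb y \coloneqq \vb{x}^*/n \in \simplex{r}$. The upper half of \Cref{prop.matrix-uau} gives $\ecw{\rho}{\MMG{A}{\rho}{n}} \leq \tfrac12 (\vb{x}^*)^\intercal A_\rho \vb{x}^* = \tfrac{n^2}{2} \vb{y}^\intercal A_\rho \vb{y} \leq \tfrac{n^2}{2} g_\rho(A),$ where the last inequality is immediate from the definition of $g_\rho(A)$ as a maximum over $\simplex{r}$.

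For the \emph{lower bound}, I would exhibit a single integer vector $\vb x$ with $\norm{\vb x}_1 = n$ that nearly saturates the bound. Let $\vb{y}^* \in \simplex{r}$ realize $g_\rho(A)$ (which exists by compactness of $\simplex{r}$, as noted after \Cref{def.mat-density}). Define $\vb x \in \ZZ_{\geq0}^r$ by rounding: let $x_i = \lfloor n y^*_i \rfloor$ for each coordinate, and then distribute the residual $n - \sum_i \lfloor n y^*_i \rfloor \in \{0,1,\ldots,r-1\}$ one unit at a time to achieve $\norm{\vb x}_1 = n$. This construction guarantees $\norm{\vb x - n\vb{y}^*}_\infty \leq 1$. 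Writing $\vb x = n\vb{y}^* + \boldsymbol\delta$ with $\norm{\boldsymbol\delta}_\infty \leq 1$ and expanding the quadratic form,
\[
\vb x^\intercal A_\rho \vb x = n^2 (\vb{y}^*)^\intercal A_\rho \vb{y}^* + 2n(\vb{y}^*)^\intercal A_\rho \boldsymbol\delta + \boldsymbol\delta^\intercal A_\rho \boldsymbol\delta = n^2 g_\rho(A) + O(n),
\]
where the implicit constant depends only on $r$ and the (bounded) entries of $A_\rho$. Applying the lower half of \Cref{prop.matrix-uau} and using that $\MMG{A}{\rho}{n}$ maximizes $\ecw{\rho}{\MG{A}{\cdot}}$ among vectors with $\ell^1$-norm $n$,
\[
\ecw{\rho}{\MMG{A}{\rho}{n}} \;\geq\; \ecw{\rho}{\MG{A}{\vb x}} \;\geq\; \tfrac12 \vb x^\intercal A_\rho \vb x - \tfrac{n}{2} \;=\; \tfrac{n^2}{2} g_\rho(A) + O(n).
\]

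Combining the two bounds yields $\ecw{\rho}{\MMG{A}{\rho}{n}} = \tfrac{n^2}{2} g_\rho(A) + O(n)$, and $O(n) = o(n^2)$, which is the claim. There is no real obstacle here; the main point is that the ``integrality gap'' introduced by rounding $n\vb y^*$ to a lattice point of $\norm{\cdot}_1 = n$ and by the term $-\tfrac12 \norm{\vb x}_1$ in \Cref{prop.matrix-uau} are both linear in $n$, hence negligible compared to the quadratic $\tfrac12 n^2 g_\rho(A)$. The only step one must handle with a bit of care is ensuring that the rounding preserves $\vb x \in \ZZ_{\geq0}^r$ with $\norm{\vb x}_1 = n$ exactly, which is why I specified distributing the residual unit-by-unit rather than, say, taking $\lceil \cdot \rceil$ or $\lfloor \cdot \rfloor$ uniformly.
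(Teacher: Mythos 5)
Your proof is correct and follows exactly the route the paper intends: the paper gives no explicit argument, stating only that the proposition follows from \Cref{def.max-mat} and \Cref{prop.matrix-uau}, and your sandwich (upper bound via rescaling the maximizing integer vector into $\simplex{r}$, lower bound via rounding $n\vb{y}^*$ to a lattice point with $\norm{\cdot}_1 = n$) is precisely the missing detail. One cosmetic remark: since $A_\rho$ need not be symmetric, the cross term in your expansion is $n(\vb{y}^*)^\intercal A_\rho \boldsymbol\delta + n\boldsymbol\delta^\intercal A_\rho \vb{y}^*$ rather than $2n(\vb{y}^*)^\intercal A_\rho \boldsymbol\delta$, but both terms are $O(n)$ so the conclusion is unaffected.
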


\subsection{Convergence}
We first observe that maximal mixed graphs are roughly regular, in a suitable weighted sense via a Zykov symmetrization-style argument.
\begin{lemma}[Zykov symmetrization]\label{prop.mat-zykov}
    Let $A$ be a mixed adjacency matrix, $n$ be a positive integer, and $\rho\in(1,\infty)$. 
    Let the \emph{$\rho$-weighted degree} of a vertex $v \in V(G)$ be $\degw v := \degu v + \rho\degd v$; 
    Then $\abs{\degw v_1 - \degw v_2} \leq \rho$ for any vertices $v_1, v_2$ of $\MMG{A}{\rho}{n}$.
\end{lemma}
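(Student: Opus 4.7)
The plan is a Zykov-symmetrization/vertex-swap argument. Write $G=\MMG{A}{\rho}{n}$ with its canonical partition $V(G)=W_1\sqcup\cdots\sqcup W_r$, where $|W_i|$ is the $i$-th coordinate of $\optvecn{A}{\rho}{n}$, and take any two vertices $v_1\in W_i$, $v_2\in W_j$. First I will dispose of the case $i=j$: by construction of $\MG{A}{\vb x}$ the vertices $v_1,v_2$ are twins (every $u\notin\{v_1,v_2\}$ is joined to both by edges of identical type and direction, and the possible edge $v_1v_2$, present iff $U_{ii}=1$, contributes the same weight to each weighted degree), so $\degw v_1=\degw v_2$ and the claim is trivial.

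For $i\neq j$ the plan is to use maximality via a one-vertex swap. Since $x_j\geq 1$, the vector $\vb x'\coloneqq\optvecn{A}{\rho}{n}-e_j+e_i$ lies in $\ZZ_{\geq 0}^r$ with $\|\vb x'\|_1=n$; I will realize $G'=\MG{A}{\vb x'}$ concretely by deleting $v_2$ from $G$ and inserting a new vertex $v_1'$ into $W_i$ whose adjacencies to $V(G)\setminus\{v_1,v_2\}$ copy those of $v_1$, with $v_1v_1'$ present as an undirected edge iff $U_{ii}=1$. Letting $w(\cdot)\in\{0,1,\rho\}$ denote edge weight (zero when absent), a direct count of the edges dropped (those incident to $v_2$) versus those gained (those incident to $v_1'$) should give
\[
\ecw{\rho}{G'}-\ecw{\rho}{G}=\degw_G v_1-\degw_G v_2 - w(v_1v_2)+w(v_1v_1').
\]
By maximality of $\MMG{A}{\rho}{n}$ among equal-size blowups the left-hand side is $\leq 0$; since $w(v_1v_2)\leq \rho$ and $w(v_1v_1')\geq 0$, rearranging will yield $\degw v_1-\degw v_2\leq \rho$, and exchanging the roles of $v_1$ and $v_2$ will give the matching lower bound.

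The only substantive task will be to verify the weight-change identity correctly: one must subtract off the contribution of the possibly-present edge $v_1v_2$ (counted in $\degw_G v_1$ but not inherited by $v_1'$ in $G'$ since $v_2$ has been removed), and include the possibly newly introduced edge $v_1v_1'$. Beyond this mild bookkeeping, the argument is a routine unpacking of the definitions of $\MG{A}{\vb x}$, $\MMG{A}{\rho}{n}$, and $\ecw{\rho}{\cdot}$.
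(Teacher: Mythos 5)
Your proposal is correct and matches the paper's argument essentially verbatim: the paper also handles same-part vertices as twins, moves one vertex from the lower-degree part to the higher-degree part, and uses the same weight-change identity (with $w(v_1v_2)=(\sym{A}{\rho})_{ij}$ and $w(v_1v_1')=U_{ii}$) together with maximality of $\optvecn{A}{\rho}{n}$. The only cosmetic difference is that the paper phrases it as a contradiction from $\degw v_1 > \degw v_2 + \rho$ rather than a direct rearrangement.
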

\begin{proof}
    Let $A=(U,D)$, and $\vb{x}^{(n)}_{\rho,A}$ be the vector from \Cref{def.max-mat}.
    By the construction given in \Cref{def.mat-graph}, if $v_1, v_2$ are in the same component $W_i$ then $\degw{v_1} = \degw{v_2}$. 
    Thus, assume to the contrary (without loss of generality) that there are $v_1\in W_1$ and $v_2\in W_2$ such that $\degw{v_1} > \degw{v_2} + \rho$.
    
    Consider the vector $\vb{x}'=\vb{x}^{(n)}_{\rho,A}+( 1, -1, 0, \dots, 0)$ (i.e., the mixed graph that results from moving vertex $v_2$ from $W_2$ to $W_1$). We have $\vb{x}'\in \ZZ_{\geq0}^r$ and $\norm{\vb{x}'}_1 = n$, but
    \[
    \ecw{\rho}{\MG{A}{\vb{x}'}} =
    \ecw{\rho}{\MG{A}{\vb{x}^{(n)}_{\rho,A}}} + \degw{v_1} - \degw{v_2}
    - \big(\sym{A}{\rho}\big)_{12} + U_{11}
    > \ecw{\rho}{\MG{A}{\vb{x}^{(n)}_{\rho,A}}},
    \]
    where the last inequality follows from 
    $\degw{v_1} - \degw{v_2} > \rho \geq \left(\sym{A}{\rho}\right)_{12}$ 
    and $U_{11} \geq 0$.
    This contradicts the maximality of $\vb{x}^{(n)}_{\rho,A}$.
\end{proof}

We will now prove the convergence lemma,
\Cref{lem.convergence}.
The bulk of the proof is in several claims. Roughly speaking, the first claim establishes a relation between \eqref{eq.aaug} and the argmax of $\vb{y}^\intercal A_\rho \vb{y}$, the second shows that $\lim_{n\to\infty}\frac1n \vb{x}_{\rho,A}^{(n)}$ exists, and the third shows that the limit has the desired properties.

\begin{claim}\label{claim.convergence1}
    Any vector $\vb{y}$ which maximizes $\vb{y}^\intercal A_\rho \vb{y}$ subject to $\vb y \in \simplex{r}$ satisfies \eqref{eq.aaug}.
\end{claim}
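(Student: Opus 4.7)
The plan is to prove the claim using the assumption in force for Lemma~\ref{lem.convergence}, namely that $A$ is condensed. The argument has two stages: first show that any maximizer of $\vb{y}^\intercal A_\rho \vb{y}$ on $\simplex{r}$ must lie in the interior $\RR_{>0}^r \cap \simplex{r}$; then apply Lagrange multipliers to derive the claimed equation.

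First I would note that since $\vb y^\intercal A_\rho \vb y$ is a scalar, it equals $\vb y^\intercal \sym{A}{\rho} \vb y$, so we may freely replace $A_\rho$ with $\sym{A}{\rho}$ throughout. Let $\vb y^*$ be any maximizer of $\vb y^\intercal \sym{A}{\rho} \vb y$ on $\simplex{r}$, so $(\vb y^*)^\intercal \sym{A}{\rho} \vb y^* = g_\rho(A)$. Suppose for contradiction that some coordinate $y_i^* = 0$. Let $A' \vartriangleleft A$ be the proper principal submatrix obtained by deleting the rows and columns indexed by the zero coordinates of $\vb y^*$, and let $\vb z^*$ be the restriction of $\vb y^*$ to its support. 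Then $\vb z^* \in \simplex{s}$ where $s = \text{size}(A')$, and $(\vb z^*)^\intercal A'_\rho \vb z^* = (\vb y^*)^\intercal A_\rho \vb y^* = g_\rho(A)$, so $g_\rho(A') \geq g_\rho(A)$. This contradicts the condensed assumption $g_\rho(A') < g_\rho(A)$, so $\vb y^* \in \RR_{>0}^r$.

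Next, since $\vb y^*$ is an interior point of $\simplex{r}$ relative to the affine constraint $\vb 1^\intercal \vb y = 1$, Lagrange multipliers apply: there exists $\lambda \in \RR$ such that
\[
    \nabla_{\vb y}\bigl(\vb y^\intercal \sym{A}{\rho} \vb y\bigr)\Big|_{\vb y = \vb y^*} = 2\,\sym{A}{\rho}\,\vb y^* = \lambda \vb 1.
\]
Contracting with $\vb y^*$ and using $\vb 1^\intercal \vb y^* = 1$ together with $(\vb y^*)^\intercal \sym{A}{\rho}\vb y^* = g_\rho(A)$ yields $2 g_\rho(A) = \lambda$, so $\sym{A}{\rho}\,\vb y^* = g_\rho(A) \vb 1$. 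Combined with $\vb y^* \in \simplex{r}$, this is precisely \eqref{eq.aaug}.

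The main obstacle here is really just the positivity step, which depends essentially on the condensed hypothesis; without it, a maximizer could lie on a face of $\simplex{r}$ and the Lagrange multiplier argument would only give $(\sym{A}{\rho}\,\vb y^*)_i = g_\rho(A)$ on the support of $\vb y^*$. The restriction-to-submatrix argument is the right way to rule this out cleanly, and the remainder of the proof is a standard KKT computation.
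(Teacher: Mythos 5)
Your proof is correct and follows essentially the same route as the paper's: Lagrange multipliers on the relative interior of $\simplex{r}$, with the boundary case excluded via the condensed hypothesis by restricting to the principal submatrix on the support of the maximizer, and the multiplier $\lambda = 2g_\rho(A)$ recovered by contracting with $\vb y^*$. The only cosmetic difference is that you establish strict positivity first and then apply the multiplier argument, whereas the paper states the dichotomy first and disposes of the boundary case second.
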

\begin{proof}
    This follows via Lagrange multipliers. Since
    \[\nabla \vb{y}^\intercal A_\rho \vb{y} = \left(A_\rho+A_\rho^\intercal\right)\vb{y} = 2\left(\sym{A}{\rho}\right)\vb{y}
    \quad\text{and}\quad \nabla \norm{\vb{y}}_1 = \vb{1},
    \]
    a maximizing vector $\vb{y}^*\in\simplex{r}$ of $\vb y^\intercal A_\rho \vb y$ either has some coordinate equal to zero (i.e., is on the boundary of $\simplex{r}$), or satisfies
    \[0 = \Big(\nabla \vb{y}^\intercal A_\rho \vb{y} -\lambda \nabla \norm{\vb{y}}_1\Big)\Big|_{\vb{y}=\vb{y}^*}
    = 2\left(\sym{A}{\rho}\right)\vb{y}^*-\lambda \vb{1}\]
    for some $\lambda \in \RR$.

    We dispose of the first case: suppose some coordinate of $\vb y^*$ were equal to zero; let $\vb y'$ be the $(r-1)$-dimensional vector with that coordinate removed from $\vb y^*$, and $A'$ be the principal submatrix with the row and column corresponding to that coordinate removed. Then we have $g_\rho(A') \geq {\vb y'}^\intercal A' \vb y' = {\vb y^*}^\intercal A' \vb y^* = g_\rho(A)$, which implies $g_\rho(A') = g_\rho(A)$, contradicting the fact that $A$ is condensed. 
    
    In the second case, we have
    \[\lambda (\vb{y}^*)^\intercal \vb1 = 2(\vb{y}^*)^\intercal \left(\sym{A}{\rho}\right) \vb{y}^* = 2 g_{\rho}(A)\]
    which implies $\lambda = \frac{2 g_{\rho}(A)}{(\vb{y}^*)^\intercal \vb1} = \frac{2 g_{\rho}(A)}{\norm{\vb{y}^*}_1} = 2 g_{\rho}(A)$; hence
    \[2\left(\sym{A}{\rho}\right)\vb{y}^* = \lambda \vb{1} = 2 g_{\rho}(A) \vb 1.\]
\end{proof}

\begin{claim}\label{claim.convergence2}
    If \eqref{eq.aaug} has a solution, then it is unique and has all coordinates strictly positive.
    \end{claim}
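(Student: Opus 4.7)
The plan is to prove both assertions by exploiting the condensed hypothesis on $A$ twice: in each case, I would show that a ``bad'' solution (one with a zero coordinate, or a pair of distinct solutions) furnishes a proper principal submatrix $A' \vartriangleleft A$ with $g_\rho(A') = g_\rho(A)$, contradicting \Cref{def.dense-mat}.

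First I would prove positivity. Supposing some solution $\vb{y}$ of \eqref{eq.aaug} has $y_i = 0$, I would delete the $i$-th row and column of $A$ to form $A' \vartriangleleft A$, and drop the zero coordinate of $\vb{y}$ to obtain $\vb{y}' \in \simplex{r-1}$. The quadratic form is insensitive to this deletion, so $(\vb{y}')^\intercal A'_\rho \vb{y}' = \vb{y}^\intercal A_\rho \vb{y}$, while left-multiplying $(\sym{A}{\rho})\vb{y} = g_\rho(A)\vb{1}$ by $\vb{y}^\intercal$ yields $\vb{y}^\intercal A_\rho \vb{y} = g_\rho(A)$. This exhibits $g_\rho(A') \geq g_\rho(A)$, and the reverse inequality is automatic since any vector in $\simplex{r-1}$ pads to a vector in $\simplex{r}$ by inserting a zero. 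The resulting equality $g_\rho(A') = g_\rho(A)$ contradicts condensed-ness of $A$.

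Uniqueness would then follow by a similar reduction. Given distinct solutions $\vb{y}_1 \neq \vb{y}_2$ (both strictly positive by the positivity just established), I would consider the affine line $\vb{y}(t) := \vb{y}_1 + t(\vb{y}_2 - \vb{y}_1)$. Linearity ensures $\vb{1}^\intercal \vb{y}(t) = 1$ and $(\sym{A}{\rho})\vb{y}(t) = g_\rho(A)\vb{1}$ for all $t \in \RR$, since both relations hold at $t \in \{0,1\}$. Since $\vb{y}_2 - \vb{y}_1$ is nonzero with coordinates summing to zero, it has at least one strictly negative entry, so there exists a smallest $t_* > 1$ at which $\vb{y}(t_*) \in \simplex{r}$ has a vanishing coordinate. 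This $\vb{y}(t_*)$ is a solution of \eqref{eq.aaug} with a zero coordinate, contradicting the positivity already proved.

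No step is especially technical. The only bookkeeping I expect to need is checking that the quadratic form transports cleanly between $A$ and its principal submatrices and recording the easy inequality $g_\rho(A') \leq g_\rho(A)$; these together convert ``zero coordinate'' into ``equality of densities,'' which is the mechanism that activates the condensed hypothesis in both parts of the argument.
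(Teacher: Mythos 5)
Your proposal is correct and follows essentially the same route as the paper: both parts reduce a bad solution (a zero coordinate, or a pair of distinct solutions moved along their affine line until a coordinate vanishes) to a proper principal submatrix $A' \vartriangleleft A$ with $g_\rho(A') \geq g_\rho(A)$, contradicting that $A$ is condensed. Your uniqueness step is just a slightly more explicit version of the paper's ``a suitable affine combination produces a nonnegative solution with a zero coordinate.''
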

    \begin{proof}
    If $\vb{y}^*$ is a solution to \eqref{eq.aaug} then
    \[ (\vb{y}^*)^\intercal A_\rho \vb{y}^* 
    = (\vb{y}^*)^\intercal (\sym{A}{\rho}) \vb{y}^* 
    = (\vb{y}^*)^\intercal \cdot g_\rho(A)\vb1
    = g_\rho(A),\]
    the last equality being true since $\norm{\vb{y}^*}_1 = 1$.
    
    We first show that no solution can have coordinates that are $0$.
    Suppose to the contrary that \eqref{eq.aaug} has a solution $\vb{y}_0$ 
    with some coordinate equal to $0$.
    Let $\vb{y}'$ be the $(r-1)$-dimensional vector with that coordinate removed from 
    $\vb{y}_0$, and let $A'$ be the principal submatrix of $A$ with the row and column corresponding to that coordinate removed.
    Then $\left(\vb{y}'\right)^\intercal A'_\rho \vb{y}' 
    = \vb{y}_0^\intercal A_\rho \vb{y}_0 = g_\rho(A)$, 
    contradicting the fact that $A$ is condensed.
    
    Now we show that there cannot be more than one solution. Suppose for the sake of 
    contradiction that there are two distinct vectors with nonnegative coordinates
     which both satisfy \eqref{eq.aaug}, which means any affine combination of them does as well.
    A suitable combination produces a vector satisfying \eqref{eq.aaug}
    with all coordinates nonnegative and at least one coordinate $0$,
    reducing to the first case.
    \end{proof}
    
    \begin{claim}\label{claim.convergence3}
    For any limit point $\widehat{\vb{y}}=( y_1,\dots,y_r) \in \simplex{r}$ of $\left\{\frac1n \vb{x}_{\rho,A}^{(n)}\right\}_{n \in \ZZ_{>0}}$, $\widehat{\vb{y}}$ is a solution of \eqref{eq.aaug}.
    \end{claim}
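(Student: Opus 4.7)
The plan is to reduce the claim to showing that $\widehat{\vb y}$ maximizes $\vb y^\intercal A_\rho \vb y$ on $\simplex{r}$, at which point \Cref{claim.convergence1} immediately gives that $\widehat{\vb y}$ solves \eqref{eq.aaug}. That $\widehat{\vb y} \in \simplex{r}$ is automatic: each $\tfrac{1}{n}\vb x^{(n)}_{\rho,A}$ has nonnegative coordinates summing to $1$, and $\simplex{r}$ is closed, so any limit point inherits these properties.

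The main step is a sandwich argument bridging the combinatorial and continuous optima. I would first apply the upper half of \Cref{prop.matrix-uau} with $\vb x = \vb x^{(n)}_{\rho,A}$ to obtain
\[
\ecw{\rho}{\MMG{A}{\rho}{n}} \;\le\; \tfrac{1}{2}\bigl(\vb x^{(n)}_{\rho,A}\bigr)^\intercal A_\rho\, \vb x^{(n)}_{\rho,A},
\]
and then combine this with \Cref{prop.fin_inf_eq}, which gives $\ecw{\rho}{\MMG{A}{\rho}{n}} = \tfrac{n^2}{2}g_\rho(A) + o(n^2)$. Dividing through by $n^2/2$ yields
\[
g_\rho(A) + o(1) \;\le\; \Bigl(\tfrac{1}{n}\vb x^{(n)}_{\rho,A}\Bigr)^\intercal A_\rho\, \Bigl(\tfrac{1}{n}\vb x^{(n)}_{\rho,A}\Bigr).
\]
Passing to the limit along a subsequence $\{n_k\}$ with $\tfrac{1}{n_k}\vb x^{(n_k)}_{\rho,A} \to \widehat{\vb y}$ and using continuity of the quadratic form $\vb y \mapsto \vb y^\intercal A_\rho \vb y$ produces $g_\rho(A) \le \widehat{\vb y}^\intercal A_\rho \widehat{\vb y}$. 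The reverse inequality is automatic from the definition $g_\rho(A) = \max_{\vb y \in \simplex{r}} \vb y^\intercal A_\rho \vb y$ together with $\widehat{\vb y} \in \simplex{r}$, so equality holds and $\widehat{\vb y}$ is a maximizer.

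I expect the work in this claim to be essentially bookkeeping; the substantive content of \Cref{lem.convergence} is concentrated in the earlier two claims, where \Cref{claim.convergence1} used Lagrange multipliers plus the condensed hypothesis (to rule out boundary maximizers, which would otherwise descend to a proper principal submatrix of equal density) and \Cref{claim.convergence2} bootstrapped this to uniqueness and strict positivity. By contrast, the present claim is just the compactness-plus-continuity link between the combinatorial maximization $\max_{\vb x} \ecw{\rho}{\MG{A}{\vb x}}$ and the continuous maximization $\max_{\vb y} \vb y^\intercal A_\rho \vb y$. The only delicate point is that the additive error term in \Cref{prop.matrix-uau} is $O(n)$ rather than $o(n)$; however, after normalizing by $n^2/2$ it becomes $O(1/n) = o(1)$, so it washes out cleanly in the limit, and no use of the Zykov-style regularity from \Cref{prop.mat-zykov} is required here.
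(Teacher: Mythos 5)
Your proof is correct, but it takes a different route from the paper's. Both arguments begin identically, using \Cref{prop.matrix-uau} and \Cref{prop.fin_inf_eq} to conclude that $\widehat{\vb y}^\intercal A_\rho \widehat{\vb y} = g_\rho(A)$. They diverge at the step of deriving the linear system $(\sym{A}{\rho})\widehat{\vb y} = g_\rho(A)\vb 1$: you observe that $\widehat{\vb y}$ is therefore a maximizer of the quadratic form on $\simplex{r}$ and delegate the rest to \Cref{claim.convergence1}, whereas the paper proves the linear system directly from the combinatorics, invoking the approximate weighted-degree regularity of $\MMG{A}{\rho}{n}$ from \Cref{prop.mat-zykov} to show that all coordinates of $(\sym{A}{\rho})\widehat{\vb y}$ coincide, and then identifying the common value as $g_\rho(A)$ by pairing with $\widehat{\vb y}$. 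Your reduction is legitimate --- \Cref{claim.convergence1} is proved independently (via Lagrange multipliers plus the condensed hypothesis to exclude boundary maximizers), so there is no circularity, and it is arguably cleaner since it concentrates all first-order analysis in one place and dispenses with \Cref{prop.mat-zykov} here. What the paper's route buys is a direct combinatorial reading of \eqref{eq.aaug} (it is literally the statement that the parts of the maximal blowup have asymptotically equal weighted degree), at the cost of having to handle vertices in each part $W_i$ --- a point that is slightly delicate if some part is empty along the subsequence, and which your argument sidesteps entirely.
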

    \begin{proof}
    By \Cref{prop.matrix-uau} and \Cref{prop.fin_inf_eq},
    \begin{equation}\label{eq.x_n_a_lim}
    \frac12 \left(\vb{x}^{(n)}_{\rho,A}\right)^\intercal A_\rho \left(\vb{x}^{(n)}_{\rho,A}\right)
    = \ecw{\rho}{\MG{A}{\vb{x}^{(n)}_{\rho,A}}} + O(n)
    = \frac{n^2}{2}g_\rho(A) + o(n^2). 
    \end{equation}
    Let $0 < n_1< n_2< \cdots$ be a sequence of positive integers such that
    $\lim_{k\to\infty} \frac1{n_k} \vb{x}^{(n_k)}_{\rho,A}=\widehat{\vb{y}}$. Then, \eqref{eq.x_n_a_lim}
    implies that $\widehat{\vb{y}}^\intercal A_\rho \widehat{\vb{y}} = g_\rho(A)$.
    Now for all integers $k$,
    write $\vb{x}^{(n_k)}_{\rho,A} = \left( x^{(n_k)}_1,\ldots,x^{(n_k)}_r \right)$,
    and let $v_1,v_2$ be two vertices
    in $\MG{A}{\vb{x}^{(n_k)}_{\rho,A}}$. By \Cref{prop.mat-zykov},
    all weighted degrees of the vertices are equal up to a difference of at most $\rho$.
    Using notation from \Cref{def.mat-graph}, assume without loss of generality that $v_1\in W_1$ and $v_2\in W_2$. Let $A=(U,D)$. Recall that $v_1, v_2$ have almost equal weighted degree. Taking the limit $k \to \infty$, we have quantitatively that
    \[\sum_{j=1}^r \left(\sym{A}{\rho}\right)_{1j} y_j 
    = \sum_{j=1}^r \left(\sym{A}{\rho}\right)_{2j} y_j,\]
    i.e., the first two coordinates of the vector
    $\left(\sym{A}{\rho}\right)\widehat{\vb{y}}$ are equal. Repeating this reasoning, we see that all coordinates in the vector are equal, i.e.,
    $\left(\sym{A}{\rho}\right)\widehat{\vb{y}}=c \vb 1$ 
    for some $c \in \RR_{\geq0}$.
    Since $g_\rho(A) = \widehat{\vb{y}}^\intercal A_\rho \widehat{\vb{y}} 
    = \widehat{\vb{y}}^\intercal (A_\rho)^\intercal \widehat{\vb{y}}$, we have
    \[g_\rho(A) = \widehat{\vb{y}}^\intercal (\sym{A}{\rho}) \widehat{\vb{y}} =
    \widehat{\vb{y}}^\intercal ( c,c,\dots,c) = c\cdot \norm{\widehat{\vb{y}}}_1 = c,\]
    which means indeed $(\sym{A}{\rho}) \widehat{\vb{y}} = g_\rho(A)\vb1$, so
    \eqref{eq.aaug} holds for $\widehat{\vb{y}}$.
\end{proof}

\begin{proof}[Proof of \Cref{lem.convergence}]
    Now we are ready to prove the lemma.
    Combining the three claims gives the desired conclusion:
    Since, $\frac 1n \vb{x}^{(n)}_{\rho,A} \in [0,1]^r$ for all $n$. the sequence $\left\{\frac1n \vb{x}_{\rho,A}^{(n)}\right\}_{n\in\ZZ_{>0}}$ has limit points;
    the limit $\lim_{n\to\infty} \frac1n \vb{x}_{\rho,A}^{(n)}$ exists; it is the unique solution to \eqref{eq.aaug}; 
    it has all positive coefficients; and it uniquely maximizes $\vb y^\intercal A_\rho \vb y$ subject to $\vb y \in \simplex{r}$.
\end{proof}

A simple corollary of \Cref{lem.convergence} is that the component sizes of $\MMG{A}{\rho}{n}$ (as in \Cref{def.mat-graph}) become arbitarily large as $n \to \infty$, since each term in $\optvec{A}{\rho}$ is positive. More precisely, we have the following:
\begin{corollary}\label{prop.mat-blowup}
    Let $\rho\in(1,\infty)$ and $A$ be a condensed mixed adjacency matrix with respect to $\rho$. For any positive integer $t$ there exists $N \in \ZZ_{\geq0}$ such that $\MG{A}{t\vb1} \subseteq \MMG{A}{\rho}{n}$ for all $n\geq N$.
\end{corollary}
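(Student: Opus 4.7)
The plan is to deduce this corollary directly from part (i) of \Cref{lem.convergence}, which guarantees that $\lim_{n\to\infty} \tfrac{1}{n}\optvecn{A}{\rho}{n} = \vb{y}^*$ for some $\vb{y}^* \in \RR_{>0}^r$ with \emph{all} coordinates strictly positive. Writing $\vb{y}^* = (y_1^*,\dots,y_r^*)$ and setting $y_{\min} = \min_{i\in[r]} y_i^* > 0$, the strict positivity of $y_{\min}$ is the key leverage.

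The next step is quantitative: by the definition of the limit, there exists $N_0 \in \ZZ_{>0}$ such that for all $n \geq N_0$, each coordinate of $\tfrac{1}{n}\optvecn{A}{\rho}{n}$ lies within $y_{\min}/2$ of the corresponding $y_i^*$. Consequently, each coordinate of $\optvecn{A}{\rho}{n}$ is at least $n \cdot y_{\min}/2$. Choosing $N = \max\bigl(N_0,\, \lceil 2t/y_{\min}\rceil\bigr)$ then guarantees that every coordinate of $\optvecn{A}{\rho}{n}$ is at least $t$ whenever $n \geq N$.

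Finally, I invoke the construction in \Cref{def.mat-graph}: the graph $\MMG{A}{\rho}{n} = \MG{A}{\optvecn{A}{\rho}{n}}$ has vertex set partitioned as $W_1 \sqcup \cdots \sqcup W_r$ with $|W_i|$ equal to the $i$-th coordinate of $\optvecn{A}{\rho}{n}$. Since each $|W_i| \geq t$, I may select arbitrary subsets $W_i' \subseteq W_i$ with $|W_i'| = t$; by \Cref{def.mat-graph}, the induced subgraph of $\MMG{A}{\rho}{n}$ on $W_1' \sqcup \cdots \sqcup W_r'$ is isomorphic to $\MG{A}{t\vb1}$, yielding the desired subgraph containment.

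There is no real obstacle to overcome; this is a direct combinatorial consequence of the convergence lemma. The only ``content'' is the strict positivity of $\vb{y}^*$, which is already packaged into the conclusion of \Cref{lem.convergence}(i), and which in turn relied on the condensedness of $A$ (any zero coordinate would contradict condensedness via restriction to a proper principal submatrix).
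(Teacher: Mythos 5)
Your proof is correct and matches the paper's reasoning: the paper derives this corollary from \Cref{lem.convergence} in exactly the same way, noting that the strict positivity of all coordinates of $\vb{y}^*$ forces each part $W_i$ of $\MMG{A}{\rho}{n}$ to have size at least $t$ for large $n$, whence $\MG{A}{t\vb1}$ embeds by restricting each part. Your version simply makes the quantitative choice of $N$ explicit, which the paper leaves implicit.
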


We give $\vb y^*$ in \Cref{lem.convergence} its own name:
\begin{definition}[Optimal vector]\label{def.opt-vec}
  Let $\rho\in(1,\infty)$ and $A$ be a condensed mixed adjacency matrix with respect to $\rho$.
  We will denote the unique solution to \eqref{eq.aaug} by $\optvec{A}{\rho}$, the \emph{optimal vector} of $A$ with respect to $\rho$.
\end{definition}

We can now strengthen our earlier Zykov symmetrization  argument in a similar fashion to Lemma 1 of~\cite{brown_erdos_simonovits}.

\begin{proposition}[Lagrangians]\label{prop.lagrangians}
Fix $\rho\in(1,\infty)$ and let $A=(U,D)$ be a condensed mixed adjacency matrix with respect to $\rho$. For any $i\neq j$, if $U_{ii} = U_{jj}$ then $\left(\sym{A}{\rho}\right)_{ij} > U_{ii}$.
\end{proposition}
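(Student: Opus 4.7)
The plan is to argue by contradiction via a ``merging'' operation on the optimal vector, mimicking the Zykov-style reduction used in \cite{brown_erdos_simonovits}. Suppose $U_{ii} = U_{jj}$ but $\left(\sym{A}{\rho}\right)_{ij} \leq U_{ii}$. Let $\vb y^* = \optvec{A}{\rho}$ be the optimal vector from \Cref{def.opt-vec}, which by \Cref{lem.convergence} lies in $\RR_{>0}^r \cap \simplex{r}$, maximizes $\vb y^\intercal A_\rho \vb y$ on $\simplex{r}$, and satisfies the eigenvalue-like equation
\[
    (\sym{A}{\rho})\,\vb y^* \;=\; g_\rho(A)\,\vb 1.
\]
Define $\vb y' \in \simplex{r}$ by ``merging'' coordinate $j$ into $i$: set $y'_i = y_i^* + y_j^*$, $y'_j = 0$, and $y'_k = y_k^*$ for $k \neq i,j$.

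First I would compute the difference $\vb y^{\prime\intercal} A_\rho \vb y' - \vb y^{*\intercal} A_\rho \vb y^*$ by expanding directly. The terms not involving coordinate $i$ or $j$ cancel, the self-loop contributions at $i,j$ rearrange using $U_{ii} = U_{jj}$, and the cross terms with other coordinates $k \neq i,j$ produce a combination $\sum_{k\neq i,j}\bigl[(\sym{A}{\rho})_{ik} - (\sym{A}{\rho})_{jk}\bigr] y_k^*$. To handle this last sum, I would subtract the $i$-th and $j$-th coordinates of the identity $(\sym{A}{\rho})\vb y^* = g_\rho(A)\vb 1$; using $U_{ii} = U_{jj}$, this yields
\[
\sum_{k\neq i,j}\bigl[(\sym{A}{\rho})_{ik} - (\sym{A}{\rho})_{jk}\bigr] y_k^*
\;=\; \bigl(U_{ii} - (\sym{A}{\rho})_{ij}\bigr)(y_j^* - y_i^*).
\]
Substituting this back should make the difference collapse to the clean expression $2(y_j^*)^2\bigl(U_{ii} - (\sym{A}{\rho})_{ij}\bigr)$, which is $\geq 0$ under our contrary assumption.

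Consequently $\vb y^{\prime\intercal} A_\rho \vb y' \geq g_\rho(A)$. Since $y'_j = 0$, restricting $\vb y'$ to its remaining $r-1$ coordinates produces a vector $\vb y'' \in \simplex{r-1}$ satisfying $\vb y^{\prime\prime\intercal} A'_\rho \vb y'' = \vb y^{\prime\intercal} A_\rho \vb y' \geq g_\rho(A)$, where $A' \vartriangleleft A$ is the proper principal submatrix of $A$ obtained by deleting row/column $j$. Hence $g_\rho(A') \geq g_\rho(A)$, contradicting the hypothesis that $A$ is condensed with respect to $\rho$ (\Cref{def.dense-mat}).

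The main technical step is the algebraic manipulation in the second paragraph: the naive expansion of the difference only yields a nonnegative quantity plus an ambiguous cross-term, and it is the stationarity relation $(\sym{A}{\rho})\vb y^* = g_\rho(A)\vb 1$ (specifically, comparing two coordinates of this identity) that is needed to rewrite the cross-term and obtain a clean nonnegative final expression. The positivity of $y_j^*$, guaranteed by \Cref{lem.convergence}(i), is then what ensures the inequality is strict enough to yield the contradiction.
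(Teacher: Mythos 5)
Your proof is correct, and it rests on the same core device as the paper's: merging coordinates $i$ and $j$ of the optimal vector $\optvec{A}{\rho}$ and comparing the resulting quadratic forms. The execution differs in how the cross-term $\sum_{k\neq i,j}\bigl[(\sym{A}{\rho})_{ik}-(\sym{A}{\rho})_{jk}\bigr]y_k^*$ is disposed of, and in where the contradiction lands. The paper chooses (without loss of generality) which of the two coordinates to zero out so that this sum has a favorable sign, then invokes the \emph{uniqueness} of the maximizer from \Cref{lem.convergence}(iii) to get the strict inequality $0 < \left(\optvec{A}{\rho}\right)^\intercal A_\rho \optvec{A}{\rho} - \vb y'^\intercal A_\rho \vb y' \le 2\bigl((\sym{A}{\rho})_{ij}-U_{ii}\bigr)y_i^*y_j^*$, and concludes using $y_i^*,y_j^*>0$. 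You instead evaluate the cross-term exactly via the stationarity identity $(\sym{A}{\rho})\vb y^* = g_\rho(A)\vb 1$ of \Cref{lem.convergence}(ii), which makes the difference collapse to $2(y_j^*)^2\bigl(U_{ii}-(\sym{A}{\rho})_{ij}\bigr)$ (I checked this identity; it is right), and you contradict condensedness by passing to the principal submatrix. Both routes are valid and use only ingredients already established in \Cref{lem.convergence}; yours is arguably tidier since no case choice is needed. One small imprecision in your closing remark: the positivity of $y_j^*$ is not actually what drives your contradiction --- even $g_\rho(A')\ge g_\rho(A)$ with equality violates the strict inequality in \Cref{def.dense-mat}, so your argument goes through regardless (it is the paper's version, not yours, that genuinely needs $y_i^*,y_j^*>0$).
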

\begin{proof}
    Let $\optvec{A}{\rho} = (y_1,\dots, y_r)$. Without loss of generality we take $i = 1, j = 2$ with
    $$\sum_{k=3}^r \left(\sym{A}{\rho}\right)_{1k} y_k \geq \sum_{k=3}^r \left(\sym{A}{\rho}\right)_{2k} y_k.$$
    Let $\vb{y}'= (y_1+y_2,0,y_3,\dots,y_r)$. By \Cref{lem.convergence}, $\optvec{A}{\rho}$ is the unique optimal vector, so
    \begin{align*}
        0 & < \left(\optvec{A}{\rho}\right)^\intercal A_\rho \optvec{A}{\rho} - \left(\vb{y}'\right)^\intercal A \vb{y}'                                                                             \\
        & = y_2^2\left(U_{22}-U_{11}\right) + y_2 \sum_{k=3}^r 2\left(\left(\sym{A}{\rho}\right)_{2k} - \left(\sym{A}{\rho}\right)_{1k}\right)y_k + 2\big((\sym{A}{\rho})_{12}-U_{11}\big) y_1 y_2 \\
        & \leq2\big((\sym{A}{\rho})_{12} - U_{11}\big)y_1 y_2. \qedhere
    \end{align*}
\end{proof}
Importantly, the above implies that if all $U_{ii}$ are equal, then $\MG{A}{\vb 1}$ has \emph{complete underlying undirected graph}, meaning every pair of vertices is connected by some edge.

\subsection{Augmentation of mixed adjacency matrices}
Here, we strengthen \S3 of \cite{brown_erdos_simonovits}, generalizing the results to mixed graphs. Our primary goal is to find a small mixed adjacency matrix that yields a $\rho$-weighted blowup of high density. We will construct such mixed adjacency matrix by showing that we can grow a template by systematically adding vertices in a way that guarantees that the density increases at each step.
(In later sections we will analyze the procedure given here to ensure that it terminates after finitely many steps at an ``optimal'' mixed graph to blow up).

We term this addition process \emph{augmentation}; we begin with a condensed mixed adjacency matrix $A$ of size $r$, then add a row and column to $A$, equivalent to adding a new vertex part $W_{r+1}$ (as in \Cref{def.mat-graph}). This forms a new mixed adjacency matrix $A'$ of size $r+1$ and higher density.
The key result,~\Cref{lem.augmentation}, will enable us to find an asymptotically maximal mixed adjacency matrix by repeated augmentation.

\begin{definition}[Augmentation]\label{def.mat-augment}
  Fix $\rho\in(1,\infty)$.
  Let $B=(U,D)$ be a mixed adjacency matrix of size $r+1$ with $U_{(r+1)(r+1)}=0$.
  Let $A=(U',D')$ be the principal submatrix of $B$ obtained by removing the 
  $(r+1)$th row and column from both $U$ and $D$.
  Further suppose $A$ is condensed with respect to $\rho$, with optimal vector
  $\optvec{A}{\rho} = (y_1,\dots,y_r)$, and
  \begin{equation}\label{eq.aug_def}
    \sum_{j=1}^r \left(\sym{B}{\rho}\right)_{(r+1)j} y_j
    > g_\rho(A).
  \end{equation}
  Then we say that $B$ is obtained from $A$ by \emph{augmentation}, denoted by $A\augmentup B$.
\end{definition}

As alluded to earlier, we can see (via an identical argument to that in Lemma 2 of~\cite{brown_erdos_simonovits}) that the augmented matrix has higher density than the submatrix it arose from.
\begin{proposition}
    If $A\augmentup B$ then $g_\rho(A)<g_\rho(B)$.
\end{proposition}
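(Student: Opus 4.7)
The plan is to exhibit an explicit vector $\vb z \in \simplex{r+1}$ such that $\vb z^\intercal (\sym{B}{\rho}) \vb z > g_\rho(A)$, which by the definition of $g_\rho(B)$ will immediately give $g_\rho(B) > g_\rho(A)$. The natural candidate is a small perturbation in the direction of the new $(r+1)$th coordinate: writing $\vb y' = (y_1, \dots, y_r, 0) \in \simplex{r+1}$ for the zero-extension of $\optvec{A}{\rho}$ and $\vb e_{r+1}$ for the standard basis vector, I would take
\[
\vb z \;=\; (1-\epsilon)\vb y' + \epsilon \vb e_{r+1} \;\in\; \simplex{r+1}
\]
for a small parameter $\epsilon \in (0,1)$ to be chosen later.

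Next I would expand the quadratic form $\vb z^\intercal (\sym{B}{\rho}) \vb z$ into three pieces. Since $\sym{B}{\rho}$ agrees with $\sym{A}{\rho}$ on the upper-left $r \times r$ block, the pure term in $\vb y'$ evaluates to $(\vb y')^\intercal (\sym{B}{\rho}) \vb y' = \optvec{A}{\rho}^\intercal (\sym{A}{\rho}) \optvec{A}{\rho} = g_\rho(A)$. The cross term equals $\sum_{j=1}^r (\sym{B}{\rho})_{(r+1)j} y_j$, which the augmentation hypothesis \eqref{eq.aug_def} tells us is strictly larger than $g_\rho(A)$; call this quantity $C$. Finally, the pure $\vb e_{r+1}$ term is $(\sym{B}{\rho})_{(r+1)(r+1)} = U_{(r+1)(r+1)} + \rho D_{(r+1)(r+1)} = 0$, using the hypothesis $U_{(r+1)(r+1)} = 0$ and the axiom $D_{ii} = 0$ from \Cref{def.mixed-adj}.

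Putting the three contributions together,
\[
\vb z^\intercal (\sym{B}{\rho}) \vb z - g_\rho(A) \;=\; (1-\epsilon)^2 g_\rho(A) + 2\epsilon(1-\epsilon) C - g_\rho(A) \;=\; 2\epsilon(C - g_\rho(A)) + O(\epsilon^2).
\]
Since $C - g_\rho(A) > 0$, the right-hand side is strictly positive for all sufficiently small $\epsilon > 0$. For any such $\epsilon$ we have $\vb z \in \simplex{r+1}$ with $\vb z^\intercal B_\rho \vb z = \vb z^\intercal (\sym{B}{\rho}) \vb z > g_\rho(A)$, and hence $g_\rho(B) \geq \vb z^\intercal B_\rho \vb z > g_\rho(A)$, completing the argument.

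This proof is essentially a perturbation/first-variation argument, and I do not anticipate any real obstacles; the only subtle point is verifying the diagonal entry $(\sym{B}{\rho})_{(r+1)(r+1)} = 0$, which is why the definition of augmentation requires $U_{(r+1)(r+1)} = 0$ — without it, we would pick up an extra $\epsilon^2 U_{(r+1)(r+1)}$ term, but this is still dominated by the linear term in $\epsilon$, so even that hypothesis could in principle be relaxed.
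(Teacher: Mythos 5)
Your proof is correct, and it is exactly the standard first-variation argument that the paper invokes (the paper gives no proof of its own, deferring to Lemma~2 of Brown--Erd\H{o}s--Simonovits, which proceeds by the same perturbation of the zero-extended optimal vector toward $\vb e_{r+1}$). The only hypotheses you rely on --- that $A$ is condensed so that $\optvec{A}{\rho}$ attains $g_\rho(A)$, and that the $(r+1)$th diagonal entry vanishes --- are both supplied by \Cref{def.mat-augment}, so there is nothing to add.
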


In our process of repeated augmentation, we will repeatedly leverage the following result, which gives a useful condition for when we can augment a mixed adjacency matrix while preserving the property that a relatively large blowup of the matrix remains a subgraph of some specified mixed graph $G$ (we will later take $G$ to be a large $F$-free mixed graph, thereby using $G$ as a verification that our augmentations remain $F$-free). 

\begin{lemma}[Augmentation lemma]\label{lem.augmentation}
  Fix $\rho\in(1,\infty)$ and let $A$ be a condensed mixed adjacency matrix with respect to $\rho$,
  and $\eps$ be a positive real. For all $m \in \ZZ_{\geq0}$, 
  there exists $N = N(A,\rho,\eps,m) \in \ZZ_{\geq0}$ such that
  for any mixed graph $G$ on $n$ vertices ($n$ sufficiently large) that satisfies
  \begin{enumerate}[label=(\roman*)]
    \item $\degw v \geq (g_\rho(A)+\eps)n$ for all vertices $v\in V(G)$,
    \item $\MMG{A}{\rho}{N} \subseteq G$,
  \end{enumerate}
  there exists a mixed adjacency matrix $B$ such that $A\augmentup B$,
  and $\MMG{B}{\rho}{m} \subseteq G$.
\end{lemma}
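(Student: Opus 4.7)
The strategy combines three ingredients: an averaging argument that locates many outside vertices with high $\rho$-weighted connection into the given copy of $\MMG{A}{\rho}{N}$, a pigeonhole step that classifies these vertices by a common ``edge-type template'' $\tau^*$ certifying the augmentation inequality, and an iterated refinement step to extract $\MG{B}{m\mathbf{1}}$.

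First, I would choose $N$ large enough so that, by \Cref{lem.convergence} and positivity of $\optvec{A}{\rho}$, the given copy of $\MMG{A}{\rho}{N} \subseteq G$ has parts $W_1, \ldots, W_r$ with each $|W_j|/N$ arbitrarily close to $y_j^*$. Let $V_0$ be the vertex set of this copy and $V_1 = V(G) \setminus V_0$. Summing $\degw v \ge (g_\rho(A) + \eps)n$ over $v \in V_0$ and subtracting $2\ecw{\rho}{\MMG{A}{\rho}{N}} \approx N^2 g_\rho(A)$ (via \Cref{prop.fin_inf_eq}), the $\rho$-weighted edge count between $V_0$ and $V_1$ is at least $N(g_\rho(A) + \eps/2)n$ for $n$ sufficiently large. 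A standard Markov-type argument then produces $V_1^+ := \{v \in V_1 : d(v) \ge N(g_\rho(A) + \eps/4)\}$ of size $\Omega_{\eps,\rho}(n)$, where $d(v)$ denotes $v$'s $\rho$-weighted degree into $V_0$.

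For each $v \in V_1^+$ and $j \in [r]$, let $W_j^t(v) \subseteq W_j$ be the vertices connected to $v$ by edges of type $t \in \{u, \rightarrow, \leftarrow\}$, and set $p_j^t(v) = |W_j^t(v)|/|W_j|$. Then $d(v)/N \approx \sum_j y_j^* (p_j^u(v) + \rho p_j^{\rightarrow}(v) + \rho p_j^{\leftarrow}(v))$. Define $\tau^*(v) \in \{u, \rightarrow, \leftarrow, 0\}^r$ by choosing $\tau^*(v)_j$ to be a type $t \in \{u, \rightarrow, \leftarrow\}$ with $p_j^t(v) > 0$ maximizing $\mathrm{weight}(t) \in \{1, \rho\}$, or $0$ if $v$ has no edge to $W_j$. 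A short case check ($\max \ge \mathrm{mean}$ for each $j$, using $p_j^u(v) + p_j^{\rightarrow}(v) + p_j^{\leftarrow}(v) \le 1$) gives
\[
\sum_j y_j^*\cdot\mathrm{weight}(\tau^*(v)_j) \ge d(v)/N \ge g_\rho(A) + \eps/4 > g_\rho(A).
\]
Pigeonholing $V_1^+$ on the at most $4^r$ templates produces $\mathcal U \subseteq V_1^+$ of size $\Omega(n)$ sharing a common $\tau^*$. Form $B$ by appending a row/column to $A$ realizing the edge types prescribed by $\tau^*$ (with diagonal entry $0$); the displayed bound is exactly the augmentation inequality \eqref{eq.aug_def}, so $A \augmentup B$.

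The main obstacle is the extraction $\MG{B}{m\vb 1} \subseteq G$: we need $W_{r+1}^* \subseteq \mathcal U$ and $W_j^* \subseteq W_j$ each of size $\ge m$ such that every pair $(u, w) \in W_{r+1}^* \times W_j^*$ realizes edge type $\tau^*_j$ in $G$ (for each $j$ with $\tau^*_j \ne 0$), while the $W_j^*$ together still support a copy of $\MG{A}{m\vb 1}$ (automatic since $W_j^* \subseteq W_j$ with $|W_j| \ge m$). The difficulty is that $p_j^{\tau^*_j}(v)$ could a priori be as small as $1/|W_j|$, precluding a naive K\H{o}v\'ari-S\'os-Tur\'an argument. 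To overcome this I would further pigeonhole each $v \in \mathcal U$ on the discretized values of $p_j^{\tau^*_j}(v)$ into $M$ buckets (for $M = M(A, \rho, \eps, m)$ chosen suitably large), yielding $\mathcal U' \subseteq \mathcal U$ of size $\Omega(n/M^r)$ with uniform bucket per $j$; then iterate $j = 1, \ldots, r$, at each step applying a random sampling / Zarankiewicz-style averaging to carve out $W_j^* \subseteq W_j$ of size $m$ common to most remaining $v$'s, shrinking the candidate $W_{r+1}^*$. Choosing $N$ large enough that even the smallest bucket yields $\ge m$ witnesses per part, and $n$ large enough that $|\mathcal U'|$ dwarfs the other constants, the iteration terminates with $|W_{r+1}^*| \ge m$, completing the embedding.
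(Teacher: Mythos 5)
Your first two steps (the averaging argument producing a linear-sized set of outside vertices with weighted degree at least $(g_\rho(A)+\eps/4)N$ into the copy of $\MMG{A}{\rho}{N}$, and a pigeonhole on edge-type templates to certify \eqref{eq.aug_def}) match the paper's outline in spirit, but your definition of the template $\tau^*$ creates a genuine gap in the extraction step that the subsequent bucketing does not repair. You set $\tau^*(v)_j$ to be a maximal-weight type $t$ with $p_j^t(v)>0$; this correctly yields $\sum_j y_j^*\,\mathrm{weight}(\tau^*(v)_j)\ge d(v)/N$, but it permits, say, $\tau^*(v)_j$ to be ``directed'' on the strength of a \emph{single} directed edge from $v$ into $W_j$. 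To embed $\MG{B}{m\vb 1}$ you need $m$ vertices of $W_j$ joined by that directed type to $m$ common vertices of $\mathcal U$, and a bipartite graph in which each left-vertex has only $O(1)$ right-neighbors contains no $K_{m,m}$ for $m\ge 2$, no matter how large $n$ and $N$ are. Your proposed fix --- bucketing $p_j^{\tau^*_j}(v)$ into $M$ classes and ``choosing $N$ large enough that even the smallest bucket yields $\ge m$ witnesses'' --- fails precisely in the bottom bucket: $p_j^{\tau^*_j}(v)\in(0,1/M]$ gives a witness count as small as $1$ independently of $N$. And you cannot simply demote such a $j$ to a lower-weight type after the fact, because that changes $(\sym{B}{\rho})_{(r+1)j}$ and you would have to re-verify \eqref{eq.aug_def}.

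The missing idea is to build the threshold into the definition of the template: declare type $t$ for part $j$ only when at least $cN$ vertices of $W_j$ realize it (the paper uses $c=\frac{\eps}{8r\rho}$), and otherwise round down to the next-cheapest available type. The total loss in $\sum_j(\sym{B}{\rho})_{(r+1)j}|W_j|$ from this rounding is at most $\eps N/4$, which is absorbed by the $\eps/2$ slack in $d(v)\ge(g_\rho(A)+\eps/2)N$, so \eqref{eq.aug_def} still holds for $N$ large (this is the content of the paper's \Cref{claim.aug-theta-ineq,claim.aug-a1}). Once every retained type has $\ge cN\ge m$ witnesses with $c$ independent of $N$, your K\H{o}v\'ari--S\'os--Tur\'an iteration over $j=1,\dots,r$ would indeed finish (shrinking $\mathcal U$ by a factor $\approx c^m$ per part). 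The paper takes a blunter route at this point: it pigeonholes the outside vertices on their \emph{exact} neighborhood in all of $V_1$ (only $4^N$ classes, still leaving $\Omega_{N}(n)$ vertices), so all of $V''$ attach identically to $V_1$ and the witness sets for different $v$ coincide, making the embedding of $\MG{B}{m\vb 1}$ immediate with no Zarankiewicz argument at all. Either finish works, but only after the thresholded template definition; as written, your augmentation inequality and your embedding requirement are certified by incompatible choices of $B$.
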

\begin{proof}
  Take $N$ to be a fixed large integer (we precisely choose its value later in the argument).
  For any mixed graph $G$ on $n$ vertices that satisfies the lemma conditions,
  divide $V(G)$ into two parts: $V_1$, the vertices of some copy of
  $\MMG{A}{\rho}{N} \subset G$, and $V_2 = V(G) \backslash V_1$.
  Further partition $V_1 = W_1 \sqcup\dots\sqcup W_r$ (where $r$ is the size of $A$)
  according to~\Cref{def.mat-graph}.
  This partitioning is illustrated in \Cref{fig.aug-vertex-partition}.
  \begin{figure}[ht]
    \centering
    \includegraphics[width=10cm]{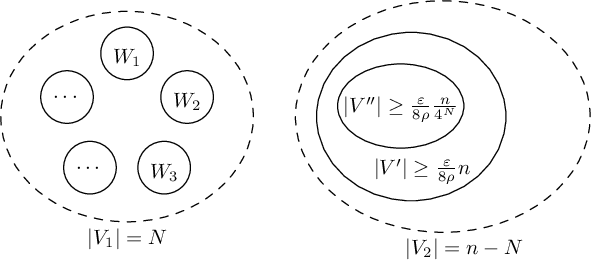}
    \caption{Partitioning $V(G)$ into $V_1$ and $V_2$}
    \label{fig.aug-vertex-partition}
  \end{figure}
  
  For vertex $v\in V(G)$ and set $S\subseteq V(G)$, we use
  $\degw(v,S)$ to denote the weighted degree of $v$ with respect to $S$.
  We also consider a weighted edge count between $V_1$ and $V_2$ defined similarly.
  \begin{claim}
    Let $V'\subseteq V_2$ be the set of all vertices $v\in V_2$ such that $\degw(v,V_1) \geq (g_\rho(A)+\frac\eps2)N$. Then $\abs{V'}\geq \frac{\eps}{8\rho}n$ for sufficiently large $n=n(\eps,\rho,N)$, and any mixed graph $G$ on $n$ vertices satisfying the two conditions of the lemma.
  \end{claim}
  \begin{proof}
    By condition (i), the weighted edge count between $V_1$ and $V_2$ is 
    \[\sum_{v\in V_2}\degw(v, V_1) =
    \sum_{u\in V_1}\degw(u, V_2) \geq
    (g_\rho(A)+\eps)(n-N) N -\rho N.\]
    When $n=n(\eps,\rho,N)$ is sufficiently large, $(g_\rho(A)+\eps)(n-N) N -\rho N \geq(g_\rho(A)+\frac{3\eps}4)(n-N) N$. Also note $\degw(v, V_1) \leq \rho N$ for all $v\in V_2$. Thus
    \begin{align*}
      \left(g_\rho(A)+\frac{3\eps}4\right)(n-N)N 
      \leq  \sum_{v\in V_2}\degw(v, V_1)
      & = \sum_{v\in V_2\setminus V'}\degw(v, V_1) + \sum_{v\in V'}(v, V_1) \\
      & \leq \left(g_\rho(A)+\frac{\eps}2\right)N(n-N) + \rho N \abs{V'},
    \end{align*}
    which simplifies to $\abs{V'} \geq \frac{\eps}{4\rho}(n-N) \geq \frac{\eps}{8\rho} n$.
  \end{proof}

  There are only $4^{N}$ distinct ways for a vertex in $V_2$ to be connected to
  $V_1$. Hence there exists
  $V''\subseteq V'$ with at least $\frac{\eps}{8\rho}\frac{n}{4^N}$ vertices, such that all vertices $v\in V''$ are
  connected identically to vertices in $V_1$, and
  $\degw(v, V_1) \geq \left(g_\rho(A)+\frac\eps 2\right)N$.

  \begin{figure}[ht]
    \centering
    \includegraphics[width=15cm]{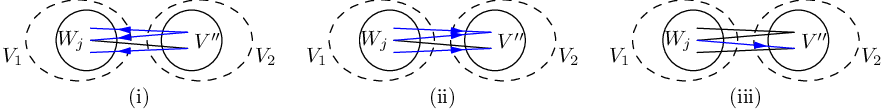}
    \caption{Illustrations of cases in the augmentation step}
    \label{fig.aug-case-ctor}
  \end{figure}

  Let $A=(U,D)$. We define matrix $B=(U',D')$ of size $r+1$ by adding a row and a column to $A$.
  Specifically,
  $U'_{(r+1)(r+1)} = D'_{(r+1)(r+1)}= 0$,
  and for each $j\in [r]$:
  \begin{enumerate}[label=(\roman*)]
    \item if there are at least $\frac{\eps}{8r\rho}N$ vertices in $W_j$ joined to each vertex in $V''$
          by a directed edge with head vertex in $W_j$, set $D'_{(r+1)j}=2$
          and $U'_{(r+1)j} = U'_{j(r+1)}=D'_{j(r+1)}=0$;
    \item else if there are at least $\frac{\eps}{8r\rho}N$ vertices in $W_j$ joined to each vertex in $V''$
          by a directed edge with head vertex in $V''$, set 
          $D'_{j(r+1)}=2$ and 
          $U'_{(r+1)j} = U'_{j(r+1)}=D'_{(r+1)j}=0$;
    \item else if there are at least $\frac{\eps}{8r\rho}N$ vertices in $W_j$ joined to each vertex
          of $V''$ (by either directed or undirected edges),
          set $U'_{(r+1)j} = U'_{j(r+1)} = 1$ and $D'_{(r+1)j} = D'_{j(r+1)} = 0$;
    \item else, set $U'_{(r+1)j} = U'_{j(r+1)} =D'_{(r+1)j} = D'_{j(r+1)} = 0$.
  \end{enumerate}
 See \Cref{fig.aug-case-ctor} for an illustration: in essence, we count the number of edges between each component and ``round'' this number to a linear factor of $N$ in order to decide the corresponding term of the mixed adjacency matrix.
  \begin{claim}\label{claim.aug-theta-ineq}
    For vertex $v\in V''$ and $j\in[r]$,
    the weighted degree of $v$ with respect to  $W_j$ is
    \begin{equation}\label{eq.deg-vcj}
    \degw(v, W_j) \leq \left(\sym{B}{\rho}\right)_{(r+1)j}\abs{W_j} 
    + \frac{\eps N}{4r}.
    \end{equation}
  \end{claim}
  \begin{proof}
    We examine all four cases in the construction of $B$ above. Since $\rho>1$, we have
    $\degw(v, W_j) \leq \rho\abs{W_j}$, so in cases (i) and (ii) where $\left(\sym{B}{\rho}\right)_{(r+1)j}= \rho$, \Cref{eq.deg-vcj} is clearly true.
    In case (iii), since $\left(\sym{B}{\rho}\right)_{(r+1)j}= 1$ and the number of directed edges is less than $2\frac{\eps}{8r\rho}N$, we have
    \[\degw(v, W_j) < \abs{W_j} + (\rho-1)\left(2\frac{\eps}{8r\rho}N\right) < \abs{W_j} + \frac{\eps N}{4r}.\]
    Finally in case (iv), the number of edges (directed or undirected) is less than $\frac{\eps}{8r\rho}N$; hence we have
    $\degw(v, W_j) < \rho \frac{\eps}{8r\rho}N< \frac{\eps N}{4r}$.
  \end{proof}

  \begin{claim}\label{claim.aug-a1}
    When $N=N(A,\rho,\eps)$ is sufficiently large, $A\augmentup B$.
  \end{claim}
  \begin{proof}
    By \Cref{claim.aug-theta-ineq}, for each vertex $v\in V''$, the weighted degree of $v$ with
    respect to $V_1$ is
    \[ \degw(v, V_1) \leq \sum_{j=1}^r 
      \left(\sym{B}{\rho}\right)_{(r+1)j}\abs{W_j}
      + \frac{\eps N}{4}.
    \]
    Combining this with $\degw(v, V_1)\geq(g_\rho(A)+\frac\eps2)N$, we get
    \[ \left(g_\rho(A)+\frac\eps2\right) N \leq \degw(v, V_1) \leq
      \sum_{j=1}^r\left(\sym{B}{\rho}\right)_{(r+1)j}\abs{W_j}
      + \frac{\eps N}{4}.
    \]
    Dividing by $N$,
    \[ g_\rho(A)+\frac\eps4 \leq \sum_{j=1}^r 
    \left(\sym{B}{\rho}\right)_{(r+1)j}\frac{\abs{W_j}}{N} .
    \]
    Let $\vb{y}^*_{\rho,A} = ( y_1,\dots,y_r)$ be the optimal vector of $A$.
    Recall that $W_j$ are the parts of $\MMG{A}{\rho}{N}$, so
    $\lim_{N\to\infty}\frac{\abs{W_j}}{N} = y_j$ by \Cref{lem.convergence}. So when $N=N(A,\rho,\eps)$ is sufficiently large,
    we have
    \[ g_\rho(A) < \sum_{j=1}^r 
     \left(\sym{B}{\rho}\right)_{(r+1)j} y_j,
    \]
    which means $A\augmentup B$ by \Cref{def.mat-augment}.
  \end{proof}
  
  Finally, we will show that $\MMG{B}{\rho}{m}\subseteq G$.
  Because $A$ is condensed, by \Cref{prop.mat-blowup}, if $N$ is sufficiently large, then 
  $\MG{A}{m\vb{1}}\subseteq\MMG{A}{\rho}{N}$.
  Thus $\MG{A}{m\vb{1}}\subseteq\MMG{A}{\rho}{N}\subseteq G$ by condition (ii) of the lemma. 
  By construction of $B$, when $n$ is
  sufficiently large, $\abs{V''}\geq m$ and all edges of $\MG{B}{m\vb{1}}$
  corresponding to the $(r+1)$th row and $(r+1)$th column of the undirected and directed parts
  of $B$ are in $G$.
  Therefore $\MG{B}{m\vb{1}}\subseteq G$. Since  $\MMG{B}{\rho}{m}\subseteq \MG{B}{m\vb{1}}$, we have   $\MMG{B}{\rho}{m}\subseteq G$.
\end{proof}

\subsection{Extremal mixed adjacency matrices}
We consider ``extremal'' mixed adjacency matrices whose blowups always yield $F$-free mixed graphs.

We begin with size $1$ mixed adjacency matrices and consider all matrices which can be obtained by successive augmentation series; we certify that there are only finitely many matrices, and that the one with highest density can be blown up into asymptotically extremal $F$-free mixed graphs.

We first define a notion of \emph{containment}: roughly speaking, the mixed adjacency matrix $A$ is contained in a sequence of mixed graphs $\{G^{(n)}\}$ if every mixed graph of the form $\MG{A}{\vb x}$ is a subgraph of some $G \in \{G^{(n)}\}$.
\begin{definition}[Containment]\label{def.contain}
  For $\rho\in(1,\infty)$, let $A$ be a condensed mixed adjacency matrix with respect to $\rho$,
  and let $\left\{G^{(n)}\right\}_{n\in\ZZ_{>0}}$ be a sequence of mixed graphs.
  We say $A$ is \emph{contained} in the sequence if for any positive
  integer $m$, there exists integer $n$ such that $\MMG{A}{\rho}{m} \subseteq G^{(n)}$.
\end{definition}

It is easy to see by \Cref{prop.mat-blowup} that the following three conditions are equivalent:
\begin{enumerate}[label=(\roman*)]
    \item $A$ is contained in the sequence $\left\{G^{(n)}\right\}_{n\in\ZZ_{>0}}$;
    \item there is an infinite sequence of integers $m_1 < m_2 < \cdots$ such that for each $m_i$ in the sequence, there exists integer $n$ so that $\MMG{A}{\rho}{m_i}\subseteq G^{(n)}$;
    \item for any positive integer $t$, there exists integer $n$ such that $\MG{A}{t\vb{1}} \subseteq G^{(n)}$.
\end{enumerate}

Now, following the proof of Theorem 1 in \cite{brown_erdos_simonovits}, we briefly discuss the Zarankiewicz problem \cite{Zarankiewicz1947SurLR} corresponding to the $\rho$-weighted extremal problem; the extremal mixed graphs that arise will in fact be the large mixed graphs $G$ on which we later apply the augmentation lemma (\Cref{lem.augmentation}).

\begin{definition}\label{def.mat-max-min-deg}
    Let $\rho\in(1,\infty)$, $F$ be a mixed graph, and $n$ be a positive integer. Let
    \[d^{(n)}_{\rho,F} := \max_{\substack{v(G)=n \\ F \not\subseteq G}} \left\{\min_{v\in V(G)} \degw v \right\}\quad,\quad
    Z^{(n)}_{\rho(F)} := \argmax_{\substack{v(G)=n \\ F \not\subseteq G}} \left\{\min_{v\in V(G)} \degw v \right\},\]
    and let $a_{\rho,F}^* := \limsup_{n\to\infty} \frac{d_{\rho,F}^{(n)}}n$.
    Then there exists an infinite sequence of increasing integers $N_{\rho,F} = (n_1, n_2, \dots)$ 
    such that 
    \[\lim_{j\to\infty} \frac{d_{\rho,F}^{(n_j)}}{n_j} = a_{\rho,F}^*.\]
\end{definition}
In other words $d^{(n)}_{\rho,F}$ is the maximal minimum-degree across all $n$ vertex $F$-free mixed graphs, and $Z^{(n)}_{\rho(F)}$ is a mixed graph achieving this maximum.
We will use $Z_{\rho,F}^{(n)}$ as the large $F$-free mixed graph $G$ in \Cref{lem.augmentation}; by construction all of its vertices have high minimum degree (equal to $d_{\rho,F}^{(n)}$).

Denote the two mixed adjacency matrices of size $1$ by $\vb 0 = ([0],[0])$ and $K = ([1],[0])$. The latter is named thus because $\MG{K}{t\vb1}$ is the complete graph $K_t$.

\begin{proposition}\label{prop.theta1r1}
  Let $F$ be a mixed graph with $\theta(F)\in(1,\infty)$. For any $\rho\in(1,\infty)$, $K$ is the only condensed mixed adjacency matrix with all diagonal elements of its undirected part equal to $1$ which can be contained in the sequence $\left\{Z_{\rho,F}^{(n)}\right\}_{n\in N_{\rho,F}}$.
\end{proposition}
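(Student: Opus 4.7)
The proof proceeds by contradiction: suppose there is a condensed mixed adjacency matrix $A = (U, D)$ of size $r \ge 2$ with $U_{ii} = 1$ for all $i \in [r]$ that is contained in $\{Z_{\rho,F}^{(n)}\}_{n \in N_{\rho,F}}$. The first move is to invoke Proposition~\ref{prop.lagrangians}: since $U_{ii} = U_{jj} = 1$ for every pair $i \ne j$, we have $(\sym{A}{\rho})_{ij} > 1$. By Definition~\ref{def.mixed-adj}, the off-diagonal entries of $\sym{A}{\rho}$ take values in $\{0, 1, \rho\}$, corresponding respectively to no edge, an undirected edge, or a directed edge between parts; only the directed case exceeds $1$. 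Thus $D_{ij} + D_{ji} = 2$ for every $i \ne j$, and $\MG{A}{t\vb{1}}$ consists of $r$ undirected $t$-cliques $W_1, \ldots, W_r$ with every edge between distinct cliques directed in some orientation prescribed by $D$.

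Next, since $\theta(F) > 1$, Theorem~\ref{mixedgraph-general} forces $F$ to be collapsible, which is equivalent to $F \subseteq \overrightarrow{K_s}[t_0]$ for some $s, t_0 \in \ZZ_{>0}$. I would then exhibit an embedding $\overrightarrow{K_s}[t_0] \subseteq \MG{A}{t\vb{1}}$ for $t$ sufficiently large, contradicting the $F$-freeness of $Z_{\rho,F}^{(n)}$. To build it, first pick any ordered pair of distinct indices $(a, b)$ with $D_{ab} = 2$ (guaranteed by step~1) and map the two parts of $\overrightarrow{K_s}[t_0]$ spanning its directed edges (call them $V_1$ and $V_2$, each of size $t_0$) to $t_0$ vertices of $W_a$ and $W_b$ respectively. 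Distribute the remaining $s - 2$ parts among the $W_k$'s, allowing multiple parts per $W_k$; this is achievable once $t$ is large enough. For each undirected edge of $\overrightarrow{K_s}[t_0]$, the corresponding pair in $\MG{A}{t\vb{1}}$ is either an undirected edge (when both endpoints lie in the same $W_k$) or a directed edge (when in different $W_k$'s) — both admissible by forgetting directions in the subgraph relation — while the directed edges of $\overrightarrow{K_s}[t_0]$ embed correctly into $W_a \to W_b$ by construction, and any extra edges induced within a single $V_i$'s image (which would form a sub-clique of some $W_k$) are simply deleted.

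The main obstacle is this embedding step: $\overrightarrow{K_s}[t_0]$'s independent-set parts must be reconciled with $\MG{A}{t\vb{1}}$'s clique parts. This is overcome precisely because edge deletion is permitted in the mixed-graph subgraph relation, and because the unique directed edge of $\overrightarrow{K_s}$ can be aligned with an actual directed inter-part pair $(W_a, W_b)$ from step~1. Once the embedding is in hand, containment of $A$ in the sequence gives $\MG{A}{t\vb{1}} \subseteq Z_{\rho,F}^{(n_j)}$ for some $n_j \in N_{\rho,F}$, whence $F \subseteq Z_{\rho,F}^{(n_j)}$, contradicting the $F$-freeness of $Z_{\rho,F}^{(n_j)}$.
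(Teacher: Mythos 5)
Your proof is correct and follows essentially the same route as the paper's: both hinge on Proposition~\ref{prop.lagrangians} to extract a directed edge between two all-ones-diagonal (clique) parts, and both then reduce to the fact from Section~3 that any $F$ with $\theta(F)>1$ must embed into a ``clique pointing at an independent set'' configuration. The only difference is organizational: the paper passes to the $2\times2$ principal submatrix and runs the contradiction through Proposition~\ref{prop.mxn-theta} to force $\theta(F)=1$, whereas you invoke the contrapositive (collapsibility of $F$, hence $F\subseteq\overrightarrow{K_s}[t_0]$) and carry out the embedding $F\subseteq\overrightarrow{K_s}[t_0]\subseteq \MG{A}{t\vb 1}$ explicitly.
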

\begin{proof}
    It suffices to show that any condensed mixed adjacency matrix $A=(U,D)$ of size at least $2$
    with all diagonal elements of $U$ equal to $1$ cannot satisfy this property.
    By \Cref{prop.lagrangians}, $\{D_{12},D_{21}\}=\{2,0\}$; without loss of generality suppose $D_{12}=2$.
    For any positive integer $t$, there exists $n\in N_{\rho,F}$ such that
    $\MG{A}{t\vb{1}}\subseteq Z_{\rho,F}^{(n)}$. We have $F\not\subseteq A[t\vb 1]$ since $Z_{\rho,F}^{(n)}$ is $F$-free by \Cref{def.mat-max-min-deg}. Now consider the mixed adjacency matrix 
    \[A' = \left(\begin{bmatrix} 1 & 0 \\ 0 & 0 \end{bmatrix}, 
    \begin{bmatrix} 0 & 2 \\ 0 & 0 \end{bmatrix}\right).\]
    We have $\MG{A'}{t\vb{1}}\subseteq\MG{A}{t\vb{1}}$, so
    $F\not\subseteq\MG{A'}{t\vb{1}}$ for all positive integers $t$,
    which means $F\not\subseteq M(x,s)$ (recall \Cref{def.graph-mxn}) for any positive integer $s$ and any $x\in(0,1)$.
    This implies $\theta(F) = 1$ by \Cref{prop.mxn-theta}, contradiction.
\end{proof}

Note that, since $\MMG{\vb 0}{\rho}{m}$ is simply an empty graph on $m$ vertices, $\vb 0$ is contained in $\left\{Z_{\rho,F}^{(n)}\right\}_{n\in N_{\rho,F}}$ for all mixed graphs $F$ and $\rho\in(1,\infty)$.
Based on the above proposition, we now define $\mathcal E_{\rho,F} = \{\vb 0, K\}$ if $K$ is contained in $\left\{Z_{\rho,F}^{(n)}\right\}_{n\in N_{\rho,F}}$, and $\{\vb 0\}$ otherwise.

We will leverage $\mcE_{\rho, F}$ to obtain an extremal $F$-free family of mixed graphs. We first suppose that $K \in \mcE_{\rho, F}$.
\begin{proposition}\label{prop.mat-max-min-deg-conv-d1}
  Let $\rho\in(1,\infty)$ and let $F$ be a mixed graph with $\theta(F)\in(1,\infty)$.
  If $K\in\mathcal E_{\rho,F}$ then the sequence $\frac{d_{\rho,F}^{(n)}}n$ converges and
  \[\lim_{n\to\infty} \frac{d_{\rho,F}^{(n)}}n = 1= g_\rho(K).\]
\end{proposition}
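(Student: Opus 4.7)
The plan is to bound $\liminf_{n\to\infty} d_{\rho,F}^{(n)}/n$ and $\limsup_{n\to\infty} d_{\rho,F}^{(n)}/n$ separately, showing both equal $1 = g_\rho(K)$.

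For the lower bound: since $K \in \mathcal{E}_{\rho,F}$ is contained in $\{Z_{\rho,F}^{(n)}\}_{n\in N_{\rho,F}}$, every blowup $\MG{K}{t\vb{1}} = K_t$ sits inside some $F$-free $Z_{\rho,F}^{(n)}$ and is therefore $F$-free. Hence the undirected complete graph $K_n$ is $F$-free for every $n$, yielding $d_{\rho,F}^{(n)} \geq n-1$ and $\liminf d_{\rho,F}^{(n)}/n \geq 1$.

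For the upper bound, I argue by contradiction. Suppose $a^* := \limsup d_{\rho,F}^{(n)}/n > 1$ and set $\varepsilon = (a^*-1)/2 > 0$, so along $N_{\rho,F}$ one has $d_{\rho,F}^{(n)} \geq (1+\varepsilon)n$ for sufficiently large $n$. Since $K$ is condensed with $g_\rho(K) = 1$, the augmentation lemma (\Cref{lem.augmentation}) applied to $A = K$ produces, for any $m$, some $N = N(K,\rho,\varepsilon,m)$ such that every $n$-vertex graph $G$ with minimum weighted degree $\geq (1+\varepsilon)n$ and containing $\MMG{K}{\rho}{N} = K_N$ yields $B$ with $K \augmentup B$ and $\MMG{B}{\rho}{m} \subseteq G$. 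The containment of $K$ furnishes arbitrarily large $n \in N_{\rho,F}$ with $K_N \subseteq Z_{\rho,F}^{(n)}$ and $d_{\rho,F}^{(n)} \geq (1+\varepsilon)n$, so I obtain such a $B$ whose $m$-vertex maximal blowup is $F$-free for every $m$.

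The key step is extracting a contradiction from $B$. The augmentation inequality applied to $A = K$ (where $\optvec{K}{\rho} = (1)$) forces $(\sym{B}{\rho})_{21} > g_\rho(K) = 1$, which rules out the purely undirected option $U_{12}=U_{21}=1$ (whose value would be exactly $1$). Thus $B = (U,D)$ has size $2$ with $U_{11}=1$, $U_{22}=0$, and exactly one directed edge between the two components. Consequently the asymmetric blowup $\MG{B}{(s,t)}$ is precisely the mixed graph $M(s/(s+t),\,s+t)$ from \Cref{def.graph-mxn} (or the reverse variant $M'(x,n)$, which is handled symmetrically as in the proof of \Cref{prop.uncollapsible}). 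Because $\MG{B}{(s,t)} \subseteq \MG{B}{\max(s,t)\vb{1}} \subseteq \MMG{B}{\rho}{m}$ for sufficiently large $m$ (by \Cref{prop.mat-blowup}, since $B$ is condensed), each $M(x,n)$ with rational $x \in (0,1)$ is $F$-free. Repeating the calculation in \Cref{prop.mxn-theta} then yields $\theta(F) \leq (1+x)/(2x)$ for rational $x$ arbitrarily close to $1$, hence $\theta(F) \leq 1$, contradicting the hypothesis $\theta(F) \in (1,\infty)$. The main obstacle is this packaging step: recognizing that a single augmentation step out of $K$ is already enough to reconstruct the $M(x,n)$ template and invoke \Cref{prop.mxn-theta}.
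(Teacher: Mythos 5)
Your proposal is correct and follows essentially the same route as the paper: the lower bound from $F$-freeness of $K_n$, and the upper bound by contradiction via the augmentation lemma applied to $K$, using the augmentation inequality to force a directed edge in $B$ and then recovering the $M(x,n)$ template to contradict $\theta(F)>1$ via \Cref{prop.mxn-theta}. The paper packages this last step as a reference to the argument in \Cref{prop.theta1r1}, but the content is identical to what you wrote out.
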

\begin{proof}
    Since $K$ is contained in the sequence $\left\{Z_{\rho,F}^{(n)}\right\}_{n\in N_{\rho,F}}$, 
    for any positive integer $t$ there exists $n\in N_{\rho,F}$ 
    such that the undirected complete graph $K_t = \MG{K}{t \vb1}\subseteq Z_{\rho,F}^{(n)}$, thus $F\not\subseteq K_t$ for all $t$, implying $d_{\rho,F}^{(t)}\geq t-1$ for all positive integers $t$. Hence
    \[\liminf_{n\to\infty} \frac{d_{\rho,F}^{(n)}}n \geq \liminf_{n\to\infty} \frac{n-1}n = 1.\]
    
    Suppose to the contrary that $\limsup_{n\to\infty} \frac{d_{\rho,F}^{(n)}}n > 1$, 
    then there exists
    $\eps > 0$ such that $d_{\rho,F}^{(n)} > (1+\eps) n$ for all sufficiently large $n \in N_{\rho,F}$.
    All conditions for \Cref{lem.augmentation} on $K$ and $Z_{\rho,F}^{(n)}$ are met for large $n$, so there exists mixed adjacency matrix $B$ such that $K\augmentup B$,
    and $\MMG{B}{\rho}{m}\subseteq Z_{\rho,F}^{(n)}$.
    This implies $F\not\subseteq\MMG{B}{\rho}{m}$. Since augmentation increases density, there are only two possible $B$ values:
    \[
    B=
    \left(\begin{bmatrix}
    1 & 0 \\ 0 & 0
    \end{bmatrix}, 
    \begin{bmatrix}
    0 & 2 \\ 0 & 0
    \end{bmatrix}\right)
    \text{ or }
    \left(\begin{bmatrix}
    1 & 0 \\ 0 & 0
    \end{bmatrix},
    \begin{bmatrix}
    0 & 0 \\ 2 & 0
    \end{bmatrix}
    \right).\]
    By the same reasoning as in the proof of \Cref{prop.theta1r1},
    we must have $F\not\subseteq M(x,t)$ for any positive integer $t$ and real number $x\in(0,1)$, thus $\theta(F)=1$, contradiction.
    Therefore $\limsup_{n\to\infty} \frac{d_{\rho,F}^{(n)}}n \leq 1$.
    
    Hence we conclude that $\lim_{n\to\infty} \frac{d_{\rho,F}^{(n)}}n = 1= g_\rho(K).$
\end{proof}

The case where $K\not\in\mathcal E_{\rho,F}$ is more complicated but follows the same premise.
\begin{definition}\label{def.b-family-ext}
  Let $F$ be a mixed graph with $\theta(F)\in(1,\infty)$. Given $\rho\in(1,\infty)$, 
  suppose $K \notin \mathcal E_{\rho,F}$.
  Define $\mathcal B_{\rho,F}$ as the set of all mixed adjacency matrices $B$ which are condensed with respect to $\rho$, and such that $B$ is contained
  in the sequence of mixed graphs $\left\{Z^{(n)}_{\rho,F}\right\}_{n \in N_{\rho,F}}$, and $B$ is obtainable from some finite (possibly empty) sequence
  \[\vb0 \augmentup B_1 \densesub B_2 \augmentup B_3 \densesub \cdots \densesub B_{2k}=B.\]
\end{definition}

We would like to choose the mixed adjacency matrix $B$ of maximal density from $\mathcal B_{\rho,F}$. The following proposition ensures that this is in fact possible:
\begin{proposition}[Finiteness]\label{prop.bf-finite}
  If $K\notin\mathcal E_{\rho,F}$, then the set $\mathcal B_{\rho,F}$ is finite.
\end{proposition}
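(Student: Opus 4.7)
The plan is to show every matrix in $\mathcal{B}_{\rho,F}$ has bounded size, which together with the fact that only finitely many mixed adjacency matrices of any given size exist will give finiteness. First, I would verify by induction on the construction sequence $\vb 0 \augmentup B_1 \densesub B_2 \augmentup \cdots \densesub B_{2k} = B$ that every $B = (U,D) \in \mathcal{B}_{\rho,F}$ has $U_{ii} = 0$ for all $i$: this holds for $\vb 0$; augmentation preserves it, since \Cref{def.mat-augment} sets the new diagonal entry $U_{(r+1)(r+1)}$ to $0$; and dense-sub preserves it, since principal submatrices inherit diagonal entries. Combined with \Cref{prop.lagrangians}, this forces $(\sym{B}{\rho})_{ij} > 0$ for all $i \ne j$, so every pair of parts of $\MG{B}{\vb 1}$ is joined by some edge; in particular, $\MG{B}{\vb 1}$ has complete underlying undirected graph.

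The analysis then splits on whether $B$ has any directed edge. Suppose first $D \ne 0$, with $D_{ab} = 2$ for some parts $a, b$. I claim the size $r$ of $B$ satisfies $r \le \undirectedchi{F^{\rhd}} - 1$. If not, fix a proper $\undirectedchi{F^{\rhd}}$-coloring of $F^{\rhd}$ in which the two adjacent vertices $t_{F^\rhd}, h_{F^\rhd}$ of the collapsed directed edge receive colors $1$ and $2$, respectively. Map color $1$ to part $a$, color $2$ to part $b$, and the remaining colors to distinct other parts of $B$ (possible since $r \ge \undirectedchi{F^{\rhd}}$). Embedding each vertex of $F^{\rhd}$ into a vertex of the corresponding part in the $t$-blowup $\MG{B}{t\vb 1}$, for $t \ge v(F^{\rhd})$, yields $F^{\rhd} \subseteq \MG{B}{t\vb 1}$: the directed edge $t_{F^\rhd} \to h_{F^\rhd}$ is mapped onto the directed edge from part $a$ to part $b$ in $B$, while every undirected edge of $F^{\rhd}$ is realized using the edge that completeness of $\MG{B}{\vb 1}$'s underlying graph guarantees between the target parts (forgetting its direction if necessary). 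Since $F \subseteq F^{\rhd}[s]$ for some $s$, we then obtain $F \subseteq \MG{B}{ts \vb 1} \subseteq Z_{\rho,F}^{(n)}$ for some $n$, contradicting $F$-freeness.

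In the remaining case $D = 0$, combined with $U_{ii} = 0$ and completeness of $\MG{B}{\vb 1}$'s underlying graph (forcing $U_{ij} = 1$ off diagonal), there is exactly one such matrix of each size $r$: the template $(U_r, 0)$ satisfying $\MG{(U_r, 0)}{\vb 1} = K_r$. If infinitely many of these belonged to $\mathcal{B}_{\rho,F}$, their sizes would be unbounded, and containment in $\{Z_{\rho,F}^{(n)}\}$ would give $K_r \subseteq Z_{\rho,F}^{(n_r)}$ for each such $r$. Then for any $t$, taking $r \ge t$ yields $K_t \subseteq K_r \subseteq Z_{\rho,F}^{(n_r)}$, so $K$ is contained in $\{Z_{\rho,F}^{(n)}\}$, i.e., $K \in \mathcal{E}_{\rho,F}$---contradicting the hypothesis. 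Hence only finitely many matrices with $D = 0$ belong to $\mathcal{B}_{\rho,F}$.

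Combining both cases, $\mathcal{B}_{\rho,F}$ contains finitely many matrices with $D \ne 0$ (by the size bound, since only finitely many mixed adjacency matrices of size at most $\undirectedchi{F^{\rhd}} - 1$ exist) and finitely many with $D = 0$, so $\mathcal{B}_{\rho,F}$ is finite. The main obstacle is the embedding argument of the second paragraph: it is crucial that $B$ have a directed edge available to host the unique collapsed directed edge of $F^{\rhd}$, and the enabling observation is that the Lagrangian property together with $U_{ii} = 0$ forces $\MG{B}{\vb 1}$ to be edge-complete so that every undirected edge of $F^{\rhd}$ can be realized, possibly via direction forgetting.
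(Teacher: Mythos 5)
Your proof is correct, and it takes a partially different route from the paper's. The paper handles everything with one uniform contradiction argument: if $\mathcal B_{\rho,F}$ were infinite it would contain (condensed) matrices $B$ of arbitrarily large size $r$; since augmentation and the $\densesub$ operation keep all diagonal entries of the undirected part equal to $0$, \Cref{prop.lagrangians} forces the underlying undirected graph of $\MG{B}{\vb 1}$ to be complete, so $K_r \subseteq \MG{B}{\vb 1}$ after forgetting directions, and containment of $B$ in $\left\{Z^{(n)}_{\rho,F}\right\}$ then plants arbitrarily large complete graphs in that sequence, i.e.\ $K \in \mathcal E_{\rho,F}$ --- contradicting the hypothesis. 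You reserve this argument for the $D=\mathbf{0}$ matrices only; for $D \neq \mathbf{0}$ you instead prove the explicit size bound $r \le \undirectedchi{F^{\rhd}}-1$ by embedding $F^{\rhd}$, and hence $F \subseteq F^{\rhd}[s]$, into $\MG{B}{ts\vb 1}$ and contradicting $F$-freeness of $Z^{(n)}_{\rho,F}$. That embedding is sound (the directed edge of $F^{\rhd}$ sits on the pair $(a,b)$ with the correct orientation, and edge-completeness of $\MG{B}{\vb 1}$ realizes every undirected edge up to forgetting directions), and it buys strictly more than the paper's proof does at this point: it is essentially condition (ii) of \Cref{def.mf-no-rho}, which the paper only verifies later, in \Cref{prop.ext-in-mf}, and only for the extremal matrix. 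Two small remarks: your appeal to \Cref{prop.lagrangians} is licensed because membership in $\mathcal B_{\rho,F}$ requires $B$ to be condensed by \Cref{def.b-family-ext} (worth saying explicitly), and your $D\neq\mathbf{0}$ case tacitly assumes $F$ has a directed edge so that $F^{\rhd}$ has a collapsed directed edge to host on $(a,b)$; in the degenerate situation where $F$ is undirected (so $F^{\rhd}=F$ by \Cref{def.collapsion}) the same embedding goes through with no distinguished pair of parts, so the bound survives.
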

\begin{proof}
    Suppose for the sake of contradiction that $\mathcal B_{\rho,F}$ is infinite. Note that the undirected part of each $B\in\mathcal B_{\rho,F}$ has all diagonal elements equal to zero, since augmentation only adds $0$s to the diagonals.
    
    Because there are only finitely many mixed adjacency matrices
    of any given size, there are arbitrarily large mixed adjacency matrices 
    $B\in\mathcal B_{\rho,F}$ contained in
    the sequence $\left\{Z_{\rho,F}^{(n)}\right\}_{n\in N_{\rho,F}}$. 
    
    Take $B \in \mathcal B_{\rho,F}$ of arbitrarily large size. Let $B'=(U',D')$ be the mixed adjacency matrix of the same size, with $U'$ having all $0$s on the diagonal and all $1$s elsewhere, and $D'$ having all elements $0$.
    By \Cref{prop.lagrangians}, each pair of vertices in $\MG{B}{\vb{1}}$ are connected by
    an edge, so $\MG{B'}{\vb{1}}\subseteq \MG{B}{\vb{1}}$. Hence the fact that $B$ is contained in 
    $\left\{Z_{\rho,F}^{(n)}\right\}_{n \in N_{\rho,F}}$ implies that $B'$ is as well,
    which means we can find arbitrarily large complete undirected graphs contained in
    $\left\{Z_{\rho,F}^{(n)}\right\}_{n \in N_{\rho,F}}$.
    Therefore $K$ is contained in $\left\{Z_{\rho,F}^{(n)}\right\}_{n \in N_{\rho,F}}$, implying $K\in \mathcal E_{\rho,F}$, contradiction.
\end{proof}

Now we give the aforementioned result on $\lim_{n\to\infty} \frac1n d_{\rho,F}^{(n)}$.
\begin{proposition}\label{prop.mat-max-min-deg-conv-theta}
  Let $F$ be a mixed graph with $\theta(F)\in(1,\infty)$, and $\rho\in(1,\infty)$.
  If $K\notin \mathcal E_{\rho,F}$, then the sequence $\frac{d_{\rho,F}^{(n)}}n$ converges and   
  \[\lim_{n\to\infty} \frac{d_{\rho,F}^{(n)}}n = \max_{B\in\mathcal B_{\rho,F}} \big\{g_\rho(B)\big\}.\]
\end{proposition}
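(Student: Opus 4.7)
The plan is to sandwich $\displaystyle\lim_{n\to\infty}\tfrac{d_{\rho,F}^{(n)}}{n}$ between $M := \max_{B\in\mathcal B_{\rho,F}} g_\rho(B)$ (which is well-defined by \Cref{prop.bf-finite}) from both sides.

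For the lower bound, I would take $B^*\in\mathcal B_{\rho,F}$ attaining $M$. Since $B^*$ is contained in $\{Z^{(n)}_{\rho,F}\}_{n\in N_{\rho,F}}$, for every positive integer $m$ the mixed graph $\MMG{B^*}{\rho}{m}$ embeds into some $Z^{(n)}_{\rho,F}$ and is therefore $F$-free. The Zykov-symmetrization of \Cref{prop.mat-zykov} says the $\rho$-weighted degrees in $\MMG{B^*}{\rho}{m}$ differ pairwise by at most $\rho$, and \Cref{prop.fin_inf_eq} gives total weighted edge count $\tfrac{m^2}{2}g_\rho(B^*)+o(m^2)$, so the minimum weighted degree is at least $(M+o(1))m$. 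This yields $d_{\rho,F}^{(m)}\ge(M+o(1))m$, so $\liminf_n\tfrac{d_{\rho,F}^{(n)}}{n}\ge M$.

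For the upper bound I would argue by contradiction. Assume $a^*_{\rho,F}>M$ and pick $\eps>0$ with $M+\eps<a^*_{\rho,F}$. I will build an infinite sequence $A_0,A_1,A_2,\dots$ of distinct matrices in $\mathcal B_{\rho,F}$ with strictly increasing $g_\rho$-values, contradicting \Cref{prop.bf-finite}. Start with $A_0 := \vb 0\in\mathcal B_{\rho,F}$, and given $A_k\in\mathcal B_{\rho,F}$ (so $g_\rho(A_k)\le M<a^*_{\rho,F}-\eps$), for each $m\in\ZZ_{>0}$ I invoke \Cref{lem.augmentation} with inputs $(A_k,\rho,\eps,m)$ to obtain $N=N(A_k,\rho,\eps,m)$. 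I then choose $n\in N_{\rho,F}$ large enough that (i) $d^{(n)}_{\rho,F}\ge(g_\rho(A_k)+\eps)n$, which holds for all but finitely many $n\in N_{\rho,F}$ since $a^*_{\rho,F}>g_\rho(A_k)+\eps$, and (ii) $\MMG{A_k}{\rho}{N}\subseteq Z^{(n)}_{\rho,F}$, which follows from the containment of $A_k$ together with the fact that arbitrarily large $n\in N_{\rho,F}$ admit this containment (equivalence (iii) after \Cref{def.contain}). The lemma then produces a matrix $B^{(m)}$ of size $\mathrm{size}(A_k)+1$ with $A_k\augmentup B^{(m)}$ and $\MMG{B^{(m)}}{\rho}{m}\subseteq Z^{(n_m)}_{\rho,F}$.

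To finish the induction I apply pigeonhole to $\{B^{(m)}\}_{m\in\ZZ_{>0}}$: since there are only finitely many mixed adjacency matrices of size $\mathrm{size}(A_k)+1$, some matrix $B^*_k$ coincides with $B^{(m)}$ for infinitely many $m$, and this $B^*_k$ is therefore contained in $\{Z^{(n)}_{\rho,F}\}_{n\in N_{\rho,F}}$ and satisfies $A_k\augmentup B^*_k$. I then let $A_{k+1}$ be a condensed principal submatrix with $B^*_k\densesub A_{k+1}$; the chain $\vb 0\augmentup\cdots\densesub A_k\augmentup B^*_k\densesub A_{k+1}$ witnesses $A_{k+1}\in\mathcal B_{\rho,F}$, and the augmentation inequality gives $g_\rho(A_{k+1})=g_\rho(B^*_k)>g_\rho(A_k)$. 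Iterating yields infinitely many distinct matrices in $\mathcal B_{\rho,F}$, a contradiction. The main obstacle is precisely this pigeonhole step: \Cref{lem.augmentation} as stated only promises one augmented $B^{(m)}$ per choice of $m$, so without extracting a single $B^*_k$ that works for infinitely many $m$ one cannot verify that the candidate $A_{k+1}$ is actually contained in the sequence and hence lies in $\mathcal B_{\rho,F}$; the finiteness of augmentation targets (arising from the bounded size increment) is what makes the pigeonhole go through.
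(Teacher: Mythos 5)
Your proof is correct and follows essentially the same route as the paper: the lower bound via \Cref{prop.fin_inf_eq} and the Zykov symmetrization of \Cref{prop.mat-zykov} applied to the maximal blowup of an optimal $B^*$, and the upper bound via the augmentation lemma followed by a pigeonhole over the finitely many augmentation targets of size $\mathrm{size}(A_k)+1$. The only cosmetic difference is that the paper applies a single augmentation to the density-maximal element of $\mathcal B_{\rho,F}$ and contradicts its maximality, whereas you iterate from $\vb 0$ to build an infinite strictly increasing chain and contradict \Cref{prop.bf-finite}; both arguments turn on exactly the pigeonhole step you correctly identify as the crux.
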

\begin{proof}
    Let $B^*$ be any mixed adjacency matrix in $\mathcal B_{\rho,F}$ with maximal density.
    By \Cref{prop.fin_inf_eq} and \Cref{prop.mat-zykov}, given any positive integer $n$,
    all vertices $v$ in $\MMG{B^*}{\rho}{n}$ have weighted degree $\degw v = g_\rho\left(B^*\right)n + o(n)$.
    Since $F\not\subseteq \MMG{B^*}{\rho}{n}$, we have $d_{\rho,F}^{(n)}\geq g_\rho\left(B^*\right)n+o(n)$ for all $n$, and
    \[\liminf_{n\to\infty}\frac{d_{\rho,F}^{(n)}}{n}\geq g_\rho\left(B^*\right).\]
    On the other hand, suppose \[\limsup_{n\to\infty} \frac{d_{\rho,F}^{(n)}}n = \lim_{\substack{n \in N_{\rho,F} \\ n\to\infty}} \frac{d_{\rho,F}^{(n)}}n  > g_\rho\left(B^*\right).\]
    There exists $\eps > 0$ such that any sufficiently large $n \in N_{\rho,F}$ satisfies $d_{\rho,F}^{(n)} > \left(g_\rho\left(B^*\right)+\eps\right) n$.
    For any $m$, take $N = N (A, \rho, \eps, m)$ from \Cref{lem.augmentation}.
    There exists some $n \in N_{\rho,F}$ such that $\MMG{B^*}{\rho}{N}\subseteq Z_{\rho,F}^{(n)}$. We established earlier that $\degw v>\left(g_\rho\left(B^*\right)+\eps\right) n$ for all vertices $v$ of $Z_{\rho,F}^{(n)}$. Hence \Cref{lem.augmentation} on $B^*$ and $Z_{\rho,F}^{(n)}$ for large $n$, states that
    there exist mixed adjacency matrix $B'$ such that $B^*\augmentup B'$ (dependent on $m$ and $n=n(m)$) and $\MMG{B'}{\rho}{m}\subseteq Z_{\rho,F}^{(n)}$.
    We then take $B'\densesub B''$ then $B''$ is condensed and $\MMG{B''}{\rho}{m}\subseteq Z_{\rho,F}^{(n)}$.
    
    Note there are only finitely many such  $B''$ because the size of each is at most one more than the size of $B^*$. Thus there is at least one such  $B''$ such that $\MMG{B''}{\rho}{m}\subseteq Z_{\rho,F}^{(n)}$ is true for infinitely many pairs $m, n(m)$. This $B''$ is contained in the sequence $\left\{Z_{\rho,F}^{(n)}\right\}_{n\in N_{\rho,F}}$, so $B''\in \mathcal B_{\rho,F}$ and $g_\rho(B'')>g_\rho\left(B^*\right)$, contradicting the maximality of $g_\rho\left(B^*\right)$ in $\mathcal B_{\rho,F}$. Therefore
    \[\limsup_{n\to\infty}\frac{d_{\rho,F}^{(n)}}{n}\leq g_\rho(B^*).\]
\end{proof}

Combining the two cases, \Cref{prop.mat-max-min-deg-conv-d1} and \Cref{prop.mat-max-min-deg-conv-theta} allow for the following definition and corollary:
\begin{definition}[Extremal mixed adjacency matrix]\label{def.ext-matrix}
  Let $F$ be a mixed graph such that $\theta(F)\in(1,\infty)$, and let $\rho\in(1,\infty)$. We use $B_{\rho,F}^*$ to denote the \emph{extremal mixed adjacency matrix} for $F$ with respect to $\rho$, defined by the following:
  \begin{enumerate}[label=(\roman*)]
    \item $B_{\rho,F}^* \coloneqq K$ if $K\in \mathcal E_{\rho,F}$;
    \item $B_{\rho,F}^* \coloneqq \argmax_{B\in \mathcal B_{\rho,F}}\;g_\rho(B)$ otherwise.
  \end{enumerate}
\end{definition}
\begin{corollary}\label{prop.df-converge-ext}
  For a mixed graph $F$ with $\theta(F)\in(1,\infty)$, and for $\rho\in(1,\infty)$,
  the sequence $\frac{d_{\rho,F}^{(n)}}n$ converges to the density of its extremal mixed adjacency matrix, namely
  \[\lim_{n\to\infty} \frac{d_{\rho,F}^{(n)}}n =  g_\rho\left(B_{\rho,F}^*\right).\]
  Furthermore, $B_{\rho,F}^*$ is contained in the sequence of mixed graphs $\left\{Z_{\rho,F}^{(n)}\right\}_{n\in N_{\rho,F}}$ and therefore 
  $F\not\subseteq \MMG{B_{\rho,F}^*}{\rho}{n}$ for any positive integer $n$.
\end{corollary}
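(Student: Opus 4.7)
The plan is to dispatch this corollary by a clean case split on whether $K \in \mathcal{E}_{\rho,F}$, pairing each case with the matching convergence proposition already established immediately above. The corollary is essentially a bookkeeping result that repackages Propositions \ref{prop.mat-max-min-deg-conv-d1} and \ref{prop.mat-max-min-deg-conv-theta} in the language of the extremal mixed adjacency matrix $B_{\rho,F}^*$ introduced in Definition \ref{def.ext-matrix}.

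First I would handle the case $K \in \mathcal{E}_{\rho,F}$. Here Definition \ref{def.ext-matrix} sets $B_{\rho,F}^* = K$, and a direct computation from Definition \ref{def.mat-density} shows $g_\rho(K) = \max_{\vb{y} \in \simplex{1}} \vb{y}^\intercal [1] \vb{y} = 1$. Proposition \ref{prop.mat-max-min-deg-conv-d1} then supplies $\lim_{n\to\infty} d_{\rho,F}^{(n)}/n = 1 = g_\rho(B_{\rho,F}^*)$, while the containment of $B_{\rho,F}^* = K$ in $\{Z_{\rho,F}^{(n)}\}_{n \in N_{\rho,F}}$ is exactly the condition that placed $K$ into $\mathcal{E}_{\rho,F}$ in the first place.

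Next I would treat the case $K \notin \mathcal{E}_{\rho,F}$. Here Definition \ref{def.ext-matrix} picks $B_{\rho,F}^* = \argmax_{B \in \mathcal{B}_{\rho,F}} g_\rho(B)$, which is well-defined precisely because $\mathcal{B}_{\rho,F}$ is a finite set by Proposition \ref{prop.bf-finite}; this is the one place one must pause to confirm the argmax is actually attained. Proposition \ref{prop.mat-max-min-deg-conv-theta} immediately yields $\lim_{n\to\infty} d_{\rho,F}^{(n)}/n = \max_{B \in \mathcal{B}_{\rho,F}} g_\rho(B) = g_\rho(B_{\rho,F}^*)$, and containment in $\{Z_{\rho,F}^{(n)}\}_{n \in N_{\rho,F}}$ holds by the defining property of $\mathcal{B}_{\rho,F}$ in Definition \ref{def.b-family-ext}.

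Finally I would verify the $F$-free claim. By Definition \ref{def.contain}, containment of $B_{\rho,F}^*$ in the sequence means that for every $m \in \ZZ_{>0}$ there exists $n$ with $\MMG{B_{\rho,F}^*}{\rho}{m} \subseteq Z_{\rho,F}^{(n)}$; since each $Z_{\rho,F}^{(n)}$ is $F$-free by construction (Definition \ref{def.mat-max-min-deg}), the subgraph $\MMG{B_{\rho,F}^*}{\rho}{m}$ inherits $F$-freeness. Substantively the heavy lifting has already been carried out in the two preceding propositions, so there is no real obstacle; the only subtlety worth flagging is the reliance on Proposition \ref{prop.bf-finite} in the second case, which ensures the definition of $B_{\rho,F}^*$ makes sense and allows one to pass from a supremum to a maximum achieved by an honest matrix.
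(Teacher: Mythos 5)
Your proposal is correct and follows essentially the same route the paper intends: the corollary is exactly the repackaging of Propositions \ref{prop.mat-max-min-deg-conv-d1} and \ref{prop.mat-max-min-deg-conv-theta} via the case split on $K\in\mathcal E_{\rho,F}$, with containment and $F$-freeness read off from Definitions \ref{def.b-family-ext}, \ref{def.contain}, and \ref{def.mat-max-min-deg}. Your flagging of Proposition \ref{prop.bf-finite} for well-definedness of the argmax is apt (one could also note $\vb 0\in\mathcal B_{\rho,F}$ so the set is nonempty), and nothing further is needed.
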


Our aim is now to show that the maximal graphs obtained by optimally blowing up $B_{\rho,F}^*$ are asymptotically extremal. We first make this notion precise in \Cref{def.extremal}, then prove it in \Cref{lem.extremal}.
\begin{definition}[Asymptotically extremal sequence]\label{def.extremal}
  Let $F$ be a mixed graph; we say that the sequence of mixed graphs $\left\{G^{(n)}\right\}_{n\in\ZZ_{>0}}$, where each $G^{(n)}$ has $n$ vertices, is \emph{asymptotically extremal} for $F$ (with respect to $\rho$) if
  \begin{itemize}
      \item $F\not\subseteq G^{(n)}$ for any $n$;
      \item for any $\eps>0$, for sufficiently large $n$, any $n$-vertex mixed graph $G$ such that $\ecw{\rho}{G} > \ecw{\rho}{G^{(n)}} + \eps n^2$ must satisfy $F\subseteq G$.
  \end{itemize}
\end{definition}

\begin{lemma}[Extremal lemma]\label{lem.extremal}
  Let $F$ be a mixed graph with $\theta(F)\in(1,\infty)$, and  $\rho\in(1,\infty)$. The sequence of maximal mixed graphs $\left\{\MMG{B_{\rho,F}^*}{\rho}{n}\right\}_{n\in\ZZ_{>0}}$ is asymptotically extremal for $F$.
\end{lemma}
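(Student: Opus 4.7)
The first clause of Definition~\ref{def.extremal} is immediate from Corollary~\ref{prop.df-converge-ext}, which already guarantees that $F\not\subseteq \MMG{B_{\rho,F}^*}{\rho}{n}$ for every $n$. The bulk of the work is the second clause, which I would establish by contradiction. Suppose for some $\eps>0$ there is an infinite set of integers $n$ along which one can find $F$-free $n$-vertex mixed graphs $H^{(n)}$ satisfying $\ecw{\rho}{H^{(n)}} > \ecw{\rho}{\MMG{B_{\rho,F}^*}{\rho}{n}} + \eps n^2$. Writing $g := g_\rho(B_{\rho,F}^*)$, Proposition~\ref{prop.fin_inf_eq} then yields $\ecw{\rho}{H^{(n)}} > \tfrac12(g+\eps)n^2$ for all sufficiently large such $n$.

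The core step is a standard minimum-weighted-degree cleanup on $H^{(n)}$: while the current graph has $k$ vertices and some vertex $v$ satisfies $\degw(v) < (g+\eps/2)k$, delete $v$. Each such deletion removes at most $(g+\eps/2)k$ weight, so if the process terminates at a subgraph $H$ on $m$ vertices, then
\[
\ecw{\rho}{H} \;\geq\; \tfrac12(g+\eps)n^2 \;-\; (g+\eps/2)\sum_{k=m+1}^{n} k.
\]
Combined with the trivial upper bound $\ecw{\rho}{H} \leq \rho\binom{m}{2}$, a routine quadratic manipulation (using $\rho > g$, since $g \leq 1$ holds by the arguments behind \eqref{eq.theta_equiv} whereas $\rho > 1$) produces $m \geq c\, n$ for some constant $c = c(\eps,\rho,g) > 0$; in particular $m \to \infty$ along our subsequence.

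By construction $H$ is $F$-free (as a subgraph of the $F$-free $H^{(n)}$), has $m$ vertices, and has minimum weighted degree at least $(g+\eps/2)m$. Hence Definition~\ref{def.mat-max-min-deg} gives $d_{\rho,F}^{(m)} \geq (g+\eps/2)m$, so $d_{\rho,F}^{(m)}/m \geq g + \eps/2$ for arbitrarily large $m$. This directly contradicts Corollary~\ref{prop.df-converge-ext}, which asserts $\lim_{m\to\infty} d_{\rho,F}^{(m)}/m = g$, completing the proof.

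The only real obstacle is the bookkeeping that shows the cleanup cannot shrink the graph below $\Omega(n)$ vertices — this is a single quadratic inequality, but one has to absorb the $o(n^2)$ slack from Proposition~\ref{prop.fin_inf_eq} carefully so that a constant fraction of the excess $\eps n^2$ weight is preserved in $H$. Everything else is a clean reduction to the limit identity in Corollary~\ref{prop.df-converge-ext}.
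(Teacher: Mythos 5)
Your proposal is correct and matches the paper's proof essentially step for step: the paper also argues by contradiction, performs the same iterative deletion of vertices of weighted degree below $\left(g_\rho(B^*)+\frac\eps2\right)k$, uses the bound $\ecw{\rho}{H}\leq\rho\binom m2$ to show the surviving subgraph has at least $\sqrt{\eps/(2\rho)}\,n$ vertices, and then contradicts the limit $\lim_{n\to\infty}d_{\rho,F}^{(n)}/n=g_\rho(B^*)$ from \Cref{prop.df-converge-ext}. (One inessential slip: your justification that $g_\rho(B^*)\leq1$ via \eqref{eq.theta_equiv} is not valid for general $\rho\in(1,\infty)$, but the fact you actually need, $\rho>g_\rho(B^*)$, holds unconditionally since every entry of $\sym{B^*}{\rho}$ is at most $\rho$ --- and in fact the quadratic bookkeeping goes through without it.)
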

\begin{proof}
    We use $B^*$ to denote $B_{\rho,F}^*$ in this proof.
    Let $\eps$ be a positive real and $G$ be an $n$-vertex mixed graph (for large $n$) such that $\ecw{\rho}{G} > \ecw{\rho}{\MMG{B^*}{\rho}{n}} + \eps n^2$.
    By \Cref{prop.fin_inf_eq}, when $n$ is sufficiently large,
    $\ecw{\rho}{\MMG{B^*}{\rho}{n}}>\left(g_\rho(B^*)-\frac\eps2\right)\frac{n^2}{2}$,
    so $\ecw{\rho}{G}> \left(g_\rho(B^*)+\frac{3\eps}2\right)\frac{n^2}{2}$.
    
    \begin{claim}\label{claim.g-n-finite-seq}
    When $n$ is sufficiently large, there exists a subgraph $H\subseteq G$ on $k$ vertices where
    $\sqrt{\frac{\eps}{2\rho}}n \leq k \leq n$, and
    \[\min_{v\in V(H)} \degw v > \left(g_\rho(B^*)+\frac\eps2\right)k.\]
    (Here $\degw v$ refers to the weighted degree of $v$ in $H$.)
    \end{claim}
    \begin{proof}
    Construct a sequence of mixed graphs $G^{(n)}, G^{(n-1)}, \dots$ such that $G^{(n)}=G$
    and $G^{(n-j)}$ is obtained from $G^{(n-j+1)}$ by removing a vertex $v$ with
    $\degw v \leq \left(g_\rho(B^*)+\frac\eps2\right)(n-j+1)$. The sequence terminates when such a vertex does not exist.
    Then when $n$ is sufficiently large, $G^{(n-j)}$ in the sequence has
    \begin{align*}
        \ecw{\rho}{G^{(n-j)}}
        & > \left(g_\rho(B^*)+\frac{3\eps}2\right)\frac{n^2}2 - \left(g_\rho(B^*)+\frac\eps2\right)\frac{j(2n-j+1)}{2} \\
        & \geq \left(g_\rho(B^*)+\frac\eps2\right) \frac{(n-j)^2}2 +
        \frac12\eps n^2 + O(n)                                                                                          \\
        & \geq \frac14\eps n^2.
    \end{align*}
    But $\ecw{\rho}{G^{(n-j)}} \leq \rho \binom{n-j}{2}$,
    so $\rho\frac{(n-j)^2}{2}> \frac14\eps n^2$, which implies
    $n-j \geq \sqrt{\frac{\eps}{2\rho}}n$, i.e., the sequence of mixed graphs must terminate
    at some graph $H$ on $k\geq \sqrt{\frac{\eps}{2\rho}}n$ vertices.
    Then, by construction,
    \[\min_{v\in V(H)} \degw v > \left(g_\rho(B^*)+\frac\eps 2\right)k.\]
    \end{proof}
    
    Now we can finish the proof of the lemma. For the sake of contradiction, assume there exists
    $\eps>0$, an infinite sequence of integers $n_1 < n_2 < \cdots$, and mixed graphs $G_j$
    on $n_j$ vertices, such that for all sufficiently large $j$,
    $\ecw{\rho}{G_j} > \ecw{\rho}{\MMG{B^*}{\rho}{n_j}}+\eps n_j^2$ and $F\not\subseteq G_j$.
    By \Cref{claim.g-n-finite-seq},  there is a subgraph $H_j\subseteq G_j$
    (therefore $F\not\subseteq H_j$) on $k_j$ vertices with
    $\sqrt{\frac{\eps}{2\rho}} n_j \leq k_j \leq n_j$ and
    $\min_{v\in V(H_j)} \degw v > \left(g_\rho(B^*)+\frac\eps2\right)k_j$.
    Therefore $\frac{d^{(k_j)}_{\rho,F}}{k_j} > g_\rho(B^*)+\frac\eps2$ for all sufficiently large $j$
    (recall \Cref{def.mat-max-min-deg}). Thus
    \[\limsup_{n\to\infty} \frac{d_{\rho,F}^{(n)}}{n} \geq \limsup_{j\to\infty}\frac{d_{\rho,F}^{(k_j)}}{k_j} \geq g_\rho(B^*)+\frac\eps2.\]
    But this contradicts \Cref{prop.df-converge-ext} which states that $\lim_{n\to\infty} \frac{d_{\rho,F}^{(n)}}n = g_\rho(B^*)$.
\end{proof}

Finally, note that while the set $\mathcal B_{\rho,F}$ is finite, its size can vary based on the value of $\rho$. We now must relate this to $\mathcal M_F$, as defined in \Cref{subsec.weightedextremal}.

\begin{proposition}\label{prop.ext-in-mf}
    Let $F$ be a mixed graph with at least one directed edge and $\theta(F)\in(1,\infty)$. Then $B_{\rho,F}^* \in \mathcal M_F$ for any $\rho\in(1,\infty)$.
\end{proposition}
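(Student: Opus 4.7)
The plan is to split on the two cases in \Cref{def.ext-matrix}. If $K \in \mcE_{\rho,F}$ then $B^*_{\rho,F} = K$ lies in $\mcM_F$ by definition. Otherwise, $B^* := B^*_{\rho,F}$ belongs to $\mcB_{\rho,F}$: it is condensed with respect to $\rho$, contained in $\{Z_{\rho,F}^{(n)}\}_{n \in N_{\rho,F}}$, and reachable from $\vb 0$ via an alternating augmentation--condensation sequence. I will verify the five conditions of \Cref{def.mf-no-rho} for $B^*$ one at a time, noting that three of them are essentially forced by the construction.

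Specifically, condition (i) (that $B^*$ is $F$-free) follows immediately from containment, since each $Z_{\rho,F}^{(n)}$ is $F$-free. Condition (iii) ($U_{ii}=0$) is immediate by induction along the augmentation--condensation sequence: the seed $\vb 0$ has zero diagonal, each augmentation appends a new diagonal entry equal to zero per \Cref{def.mat-augment}, and condensation only removes rows/columns. Condition (v) then follows from (iii) by applying \Cref{prop.lagrangians} to the condensed $B^*$ with all diagonal entries of $U$ equal to zero, yielding $(\sym{B^*}{\rho})_{ij} > 0$ for $i \ne j$ and hence $U_{ij}+U_{ji}+D_{ij}+D_{ji}>0$.

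For condition (iv), my plan is to argue by contradiction using \Cref{lem.extremal}. If $D \equiv 0$, then (iii) and (v) imply $\MG{B^*}{\vb 1}$ is the complete $r$-partite undirected graph (for $r = \operatorname{size}(B^*)$), so $g_\rho(B^*) = \max_{\vb y \in \simplex r}\sum_{i\ne j}y_iy_j = 1-1/r$. But since $F$ has a directed edge and the subgraph relation cannot introduce directions, $K_n$ is $F$-free for every $n$ with $\ecw{\rho}{K_n}=\binom n2$. By \Cref{prop.fin_inf_eq}, $\ecw{\rho}{\MMG{B^*}{\rho}{n}} = (1-1/r+o(1))\binom n2$, and the resulting gap of order $n^2/r$ contradicts the asymptotic extremality of $\{\MMG{B^*}{\rho}{n}\}$.

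The hard part is condition (ii): $\operatorname{size}(B^*)\le\undirectedchi{F^\rhd}-1$. Suppose for contradiction $r:=\operatorname{size}(B^*)\ge\undirectedchi{F^\rhd}$. By (iv), fix $i_0\ne j_0$ with $D_{j_0i_0}=2$; then I will explicitly embed $F^\rhd$ into $\MG{B^*}{s\vb 1}$ for sufficiently large $s$. The idea is to take a proper $\undirectedchi{F^\rhd}$-coloring of $F^\rhd$ with the head vertex $h$ receiving color $1$ and the tail vertex $t$ receiving color $2$ (possible since $h,t$ are adjacent), assign these colors to parts $i_0,j_0$ respectively and the remaining $\undirectedchi{F^\rhd}-2\le r-2$ colors arbitrarily among distinct remaining parts. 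Each undirected edge of $F^\rhd$ crosses two color classes and is realized by some edge (possibly directed, whose direction we may forget) between the corresponding parts, guaranteed by (v); the directed edge $t\widecheck h$ exactly matches a directed edge between parts $j_0$ and $i_0$. This gives $F^\rhd\subseteq\MG{B^*}{s\vb 1}$, and collapsibility of $F$ (guaranteed by $\theta(F)>1$ and \Cref{mixedgraph-general}) gives $F\subseteq F^\rhd[s']$ for some $s'$, so $F\subseteq\MG{B^*}{ss'\vb 1}$, contradicting (i).
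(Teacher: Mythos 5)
Your proof is correct and follows essentially the same route as the paper's: the same case split on whether $K\in\mcE_{\rho,F}$, containment for condition (i), the augmentation bookkeeping for (iii), \Cref{prop.lagrangians} for (v), a ``$g_\rho(B^*)<1$'' contradiction for (iv), and a blowup embedding of $F^\rhd$ into $\MG{B^*}{s\vb 1}$ for (ii). You simply spell out two steps the paper leaves terse — why $g_\rho(B^*)<1$ is contradictory (via $K_n$ being $F$-free and asymptotic extremality) and the explicit coloring-based embedding for (ii) — both of which check out.
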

\begin{proof}
    If $K \in \mathcal E_{\rho,F}$, then $B_{\rho,F}^* = K \in \mathcal M_F$ by \Cref{def.ext-matrix}.
    Otherwise $K \not\in \mathcal E_{\rho,F}$. Let $B_{\rho,F}^* = (U,D)$.
    
    Conditions (i) and (iii) of \Cref{def.mf-no-rho} are met by \Cref{def.b-family-ext}, and condition (v) is met by \Cref{prop.lagrangians}.
    Note that if $D$ is the zero matrix then $g(B_{\rho,F}^*) = \max_{\vb y\in\simplex{r}} \vb{y}^\intercal U \vb y < 1$, contradiction, so (iv) is true.
    By (iv) we know $\MG{B_{\rho,F}^*}{\vb 1}$ is a complete mixed graph with at least one directed edge. Hence its size is at most $\undirectedchi{F^\rhd}-1$, since otherwise for large enough $t'$ and $t$, we would have $F \subseteq F^\rhd[t] \subseteq \MG{B_{\rho,F}^*}{t'\vb1}$, contradiction. This means (ii) is true as well, so all conditions are satisfied and $B_{\rho,F}^* \in \mathcal M_F$.
\end{proof}

The main result follows:

\begin{proof}[Proof of \Cref{thm.matrix-maximal}]
    By \Cref{lem.extremal},
    \[
    \limsup_{\substack{v(G)\to\infty \\ F\not\subseteq G}} \ecw{\rho}{G}/\tbinom n2 = g_\rho(B_{\rho,F}^*).
    \]
    By \Cref{prop.ext-in-mf}, we have $B_{\rho,F}^* \in \mathcal M_F$. Furthermore, $B$ is $F$-free for all $B\in \mathcal M_F$; thus \Cref{lem.extremal} implies $g_\rho(B) \leq g_\rho(B_{\rho,F}^*)$. Hence
    \[
    \max_{B\in\mathcal M_F} g_\rho(B) = g_\rho(B_{\rho,F}^*),
    \]
    and the theorem is proved.
\end{proof}

\end{appendices}

\end{document}